\begin{document}

\newtheorem{theorem}{Theorem}[section]
\newtheorem{prop}[theorem]{Proposition}
\newtheorem{lemma}[theorem]{Lemma}
\newtheorem{corollary}[theorem]{Corollary}
\newtheorem{cor}[theorem]{Corollary}
\newtheorem{defn}[theorem]{Definition}
\newtheorem{conj}[theorem]{Conjecture}
\newtheorem{rmk}[theorem]{Remark}
\newtheorem{qn}[theorem]{Question}
\newtheorem{claim}[theorem]{Claim}
\newtheorem{defth}[theorem]{Definition-Theorem}

\newcommand{\boundary}{\partial}
\newcommand{\C}{{\mathbb C}}
\newcommand\U{{\mathbb U}}
 \newcommand\Hyp{{\mathbb H}}
\newcommand\D{{\mathbb D}}
\newcommand\Z{{\mathbb{Z}}}
\newcommand\R{{\mathbb R}}
\newcommand\Q{{\mathbb Q}}
\newcommand\E{{\mathbb E}}
\newcommand{\ints}{{\mathbb{Z}}}
\newcommand{\natls}{{\mathbb N}}
\newcommand{\ratls}{{\mathbb Q}}
\newcommand{\rls}{{\mathbb R}}
\newcommand{\proj}{{\mathbb P}}
\newcommand{\lhp}{{\mathbb L}}
\newcommand{\tube}{{\mathbb T}}
\newcommand{\cusp}{{\mathbb P}}
\newcommand\AAA{{\mathcal A}}
\newcommand\BB{{\mathcal B}}
\newcommand\CC{{\mathcal C}}
\newcommand\DD{{\mathcal D}}
\newcommand\EE{{\mathcal E}}
\newcommand\FF{{\mathcal F}}
\newcommand\GG{{\mathcal G}}
\newcommand\HH{{\mathcal H}}
\newcommand\II{{\mathcal I}}
\newcommand\JJ{{\mathcal J}}
\newcommand\KK{{\mathcal K}}
\newcommand\LL{{\mathcal L}}
\newcommand\MM{{\mathcal M}}
\newcommand\NN{{\mathcal N}}
\newcommand\OO{{\mathcal O}}
\newcommand\PP{{\mathcal P}}
\newcommand\QQ{{\mathcal Q}}
\newcommand\RR{{\mathcal R}}
\newcommand\SSS{{\mathcal S}}
\newcommand\TT{{\mathcal T}}
\newcommand\UU{{\mathcal U}}
\newcommand\VV{{\mathcal V}}
\newcommand\WW{{\mathcal W}}
\newcommand\XX{{\mathcal X}}
\newcommand\YY{{\mathcal Y}}
\newcommand\ZZ{{\mathcal{Z}}}
\newcommand\CH{{\CC\HH}}
\newcommand\PEY{{\PP\EE\YY}}
\newcommand\MF{{\MM\FF}}
\newcommand\RCT{{{\mathcal R}_{CT}}}
\newcommand\RCTT{{{\mathcal R}^2_{CT}}}
\newcommand\PMF{{\PP\kern-2pt\MM\FF}}
\newcommand\FL{{\FF\LL}}
\newcommand\PML{{\PP\kern-2pt\MM\LL}}
\newcommand\GL{{\GG\LL}}
\newcommand\Pol{{\mathcal P}}
\newcommand\half{{\textstyle{\frac12}}}
\newcommand\Half{{\frac12}}
\newcommand\Mod{\operatorname{Mod}}
\newcommand\Area{\operatorname{Area}}
\newcommand\ep{\epsilon}
\newcommand\hhat{\widehat}
\newcommand\Proj{{\mathbf P}}
\newcommand\til{\widetilde}
\newcommand\length{\operatorname{length}}
\newcommand\tr{\operatorname{tr}}
\newcommand\gesim{\succ}
\newcommand\lesim{\prec}
\newcommand\simle{\lesim}
\newcommand\simge{\gesim}
\newcommand{\simmult}{\asymp}
\newcommand{\simadd}{\mathrel{\overset{\text{\tiny $+$}}{\sim}}}
\newcommand{\ssm}{\setminus}
\newcommand{\diam}{\operatorname{diam}}
\newcommand{\pair}[1]{\langle #1\rangle}
\newcommand{\T}{{\mathbf T}}
\newcommand{\inj}{\operatorname{inj}}
\newcommand{\pleat}{\operatorname{\mathbf{pleat}}}
\newcommand{\short}{\operatorname{\mathbf{short}}}
\newcommand{\vertices}{\operatorname{vert}}
\newcommand{\collar}{\operatorname{\mathbf{collar}}}
\newcommand{\bcollar}{\operatorname{\overline{\mathbf{collar}}}}
\newcommand{\I}{{\mathbf I}}
\newcommand{\tprec}{\prec_t}
\newcommand{\fprec}{\prec_f}
\newcommand{\bprec}{\prec_b}
\newcommand{\pprec}{\prec_p}
\newcommand{\ppreceq}{\preceq_p}
\newcommand{\sprec}{\prec_s}
\newcommand{\cpreceq}{\preceq_c}
\newcommand{\cprec}{\prec_c}
\newcommand{\topprec}{\prec_{\rm top}}
\newcommand{\Topprec}{\prec_{\rm TOP}}
\newcommand{\fsub}{\mathrel{\scriptstyle\searrow}}
\newcommand{\bsub}{\mathrel{\scriptstyle\swarrow}}
\newcommand{\fsubd}{\mathrel{{\scriptstyle\searrow}\kern-1ex^d\kern0.5ex}}
\newcommand{\bsubd}{\mathrel{{\scriptstyle\swarrow}\kern-1.6ex^d\kern0.8ex}}
\newcommand{\fsubeq}{\mathrel{\raise-.7ex\hbox{$\overset{\searrow}{=}$}}}
\newcommand{\bsubeq}{\mathrel{\raise-.7ex\hbox{$\overset{\swarrow}{=}$}}}
\newcommand{\tw}{\operatorname{tw}}
\newcommand{\base}{\operatorname{base}}
\newcommand{\trans}{\operatorname{trans}}
\newcommand{\rest}{|_}
\newcommand{\bbar}{\overline}
\newcommand{\UML}{\operatorname{\UU\MM\LL}}
\newcommand{\EL}{\mathcal{EL}}
\newcommand{\tsum}{\sideset{}{'}\sum}
\newcommand{\tsh}[1]{\left\{\kern-.9ex\left\{#1\right\}\kern-.9ex\right\}}
\newcommand{\Tsh}[2]{\tsh{#2}_{#1}}
\newcommand{\qeq}{\mathrel{\approx}}
\newcommand{\Qeq}[1]{\mathrel{\approx_{#1}}}
\newcommand{\qle}{\lesssim}
\newcommand{\Qle}[1]{\mathrel{\lesssim_{#1}}}
\newcommand{\simp}{\operatorname{simp}}
\newcommand{\vsucc}{\operatorname{succ}}
\newcommand{\vpred}{\operatorname{pred}}
\newcommand\fhalf[1]{\overrightarrow {#1}}
\newcommand\bhalf[1]{\overleftarrow {#1}}
\newcommand\sleft{_{\text{left}}}
\newcommand\sright{_{\text{right}}}
\newcommand\sbtop{_{\text{top}}}
\newcommand\sbot{_{\text{bot}}}
\newcommand\sll{_{\mathbf l}}
\newcommand\srr{_{\mathbf r}}
\newcommand\geod{\operatorname{\mathbf g}}
\newcommand\mtorus[1]{\boundary U(#1)}
\newcommand\A{\mathbf A}
\newcommand\Aleft[1]{\A\sleft(#1)}
\newcommand\Aright[1]{\A\sright(#1)}
\newcommand\Atop[1]{\A\sbtop(#1)}
\newcommand\Abot[1]{\A\sbot(#1)}
\newcommand\boundvert{{\boundary_{||}}}
\newcommand\storus[1]{U(#1)}
\newcommand\Momega{\omega_M}
\newcommand\nomega{\omega_\nu}
\newcommand\twist{\operatorname{tw}}
\newcommand\modl{M_\nu}
\newcommand\MT{{\mathbb T}}
\newcommand\Teich{{\mathcal T}}

\newcommand{\etalchar}[1]{$^{#1}$}

\renewcommand{\Re}{\operatorname{Re}}
\renewcommand{\Im}{\operatorname{Im}}

\title[Three Manifold Groups, K\"ahler Groups and Complex Surfaces]{Three
Manifold Groups, K\"ahler Groups\\ and Complex Surfaces}

\author[I. Biswas]{Indranil Biswas}

\address{School of Mathematics, Tata Institute of Fundamental
Research, Homi Bhabha Road, Bombay 400005, India}

\email{indranil@math.tifr.res.in}

\author[M. Mj]{Mahan Mj}

\address{RKM Vivekananda University, Belur Math, WB-711 202, India}

\email{mahan.mj@gmail.com; mahan@rkmvu.ac.in}

\author[H. Seshadri]{Harish Seshadri}

\address{Indian Institute of Science, Bangalore 560003, India}

\email{harish@math.iisc.ernet.in}

\subjclass[2000]{57M50,  32Q15, 57M05 (Primary);  14F35, 32J15 (Secondary)}

\date{\today}

\thanks{Research of the
second author is partly supported by  CEFIPRA Indo-French Research Grant 4301-1}

\begin{abstract} Let  $G$ be a  K\"ahler group admitting a short  exact sequence  
$$ 1 \longrightarrow N  \longrightarrow G
\longrightarrow Q \longrightarrow 1$$
where $N$ is finitely generated. \\
i) Then $Q$ cannot be non-nilpotent solvable. \\
ii) Suppose in addition that $Q$ satisfies one of the following: \\
a)  $Q$ admits a discrete faithful non-elementary action on ${\mathbb{H}}^n$ for some $n \geq 2$ \\
b)  $Q$ admits a discrete faithful non-elementary minimal action on  a simplicial tree with more than two ends.\\
c) $Q$ admits a (strong-stable) cut $R$ such that the intersection of all conjugates of $R$ is trivial \\ 
Then $G$ is virtually a surface group. 

It follows that if $Q$  is
infinite, not virtually cyclic, and  is the
fundamental group of some closed 3-manifold, then  $Q$ contains as a finite
index subgroup either a finite index subgroup of
the $3$-dimensional Heisenberg group or the fundamental group of the
Cartesian product of a closed
oriented surface of positive genus and the circle.
As a corollary, we obtain a new proof of a theorem of Dimca and Suciu
in \cite{ds} by taking $N$ to be the trivial group.

If instead, $G$ is the fundamental group of a compact complex surface, and $N$ is finitely presented, then we show
that $Q$ must contain the fundamental group of a Seifert-fibered three
manifold as a finite index subgroup, and $G$ contains as a finite
index subgroup the fundamental group of an elliptic fibration.

We also give an example showing that the relation of quasi-isometry
does not preserve  K\"ahler groups. This gives a negative answer to a
question of Gromov which asks whether K\"ahler groups can be
characterized by their asymptotic geometry.
\end{abstract}

\maketitle

\tableofcontents

\section{Introduction}

In this paper,
we shall be concerned mainly with the following general set-up:
$$
1 \longrightarrow N \stackrel{i}{\longrightarrow} G \stackrel{q}{\longrightarrow} Q \longrightarrow 1
\ \ \ \ \ \ \ \ \ \ \ \ \ \ \ \ \ \ \ \ \ \ \ (\ast )
$$
is an exact sequence of finitely generated groups. We shall further assume
that $Q$ is infinite and not virtually cyclic. The group
$G$ will either be a K\"ahler group, i.e., the fundamental group of a
compact K\"ahler manifold,
or the fundamental group of a compact complex surface. We fix the letters $N, G,  i, q$ to have this connotation
throughout this paper. We investigate the restrictions that these
assumptions impose on the nature of $G$ and $Q$. It will turn out that if in addition, $Q$ is assumed to be 
the fundamental group of a 
3-manifold, then 
the existence of such an exact sequence $(\ast )$ with $N$ finitely presented shall force $Q$ to be the fundamental group of a Seifert-fibered
3-manifold. 

The following   technical Proposition is the starting point of the first part of the paper dealing with  K\"ahler groups. Various 
restrictions are imposed on $Q$ and we deduce that $G$ is virtually a surface group. Item  (i)  follows from 
  \cite{delz} \cite{brudnyi-solvq}, Item ii(a) from \cite{delzant-py}, \cite{carlson-toledo-pihes} and
Items  ii(b), (c) from the cut-K\"ahler Theorem of Delzant-Gromov \cite{dg}.

\begin{prop}\label{main1t} Let  $G$ be a  K\"ahler group admitting a short  exact sequence  
$$ 1 \longrightarrow N  \longrightarrow G
\longrightarrow Q \longrightarrow 1$$
where $N$ is finitely generated. \\
i) Then $Q$ cannot be non-nilpotent solvable. \\
ii) Suppose in addition that $Q$ satisfies one of the following: \\
a)  $Q$ admits a discrete faithful non-elementary action on ${\mathbb{H}}^n$ for some $n \geq 2$ \\
b)  $Q$ admits a discrete faithful non-elementary minimal action on a simplicial tree with more than two ends.\\
c) $Q$ admits a (strong-stable) cut $R$ such that the intersection of all conjugates of $R$ is trivial \\ 
Then $G$ is virtually a surface group. 
 \end{prop}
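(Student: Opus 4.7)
For \emph{part (i)}, I would invoke directly the result of Brudnyi \cite{brudnyi-solvq} (refining Delzant \cite{delz}), which says that every solvable quotient of a K\"ahler group is virtually nilpotent. Since $Q = G/N$ is a solvable quotient of the K\"ahler group $G$ by hypothesis, this rules out $Q$ being non-nilpotent solvable.

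For \emph{part (ii)}, the strategy is uniform in the three sub-cases. First, precompose the quotient map $q\colon G \twoheadrightarrow Q$ with the geometric structure on $Q$ to transfer that structure verbatim to $G$: in (a) a representation $G \to \mathrm{Isom}(\mathbb{H}^n)$ with non-elementary image and kernel exactly $N$; in (b) a non-elementary minimal $G$-action on the given simplicial tree, again with kernel $N$; in (c) a (strong-stable) cut $\tilde R = q^{-1}(R) \subset G$ whose intersection of conjugates is $q^{-1}(\bigcap_q qRq^{-1}) = N$. In every case the transferred datum retains the nondegeneracy required by the corresponding K\"ahler rigidity theorem; the kernel is merely finitely generated rather than trivial. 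Then one invokes the relevant theorem for the given target: Carlson-Toledo \cite{carlson-toledo-pihes} for $n=2$ and Delzant-Py \cite{delzant-py} for general $n$ in case (a); the cut-K\"ahler theorem of Delzant-Gromov \cite{dg} in cases (b) and (c). These produce, after replacing $G$ by a finite-index subgroup $G_0$, a surjection $\psi\colon G_0 \twoheadrightarrow \pi_1(\Sigma_g)$ for some closed orientable surface $\Sigma_g$ of genus $g \geq 2$, induced by a surjective holomorphic map from a K\"ahler manifold with fundamental group $G_0$ onto $\Sigma_g$ and through which the transferred structure factors.

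To close -- and this is the step I expect to be the main obstacle -- one has to upgrade ``$G$ virtually surjects onto a surface group'' to ``$G$ is virtually a surface group.'' Faithfulness of the $Q$-action, combined with the factorization through $\pi_1(\Sigma_g)$, identifies $\ker\psi$ with $N \cap G_0$, which is finitely generated. This kernel is realised as the fundamental group of the generic fibre of the holomorphic map to $\Sigma_g$. A finitely generated normal subgroup of $\pi_1(\Sigma_g)$ is either trivial or of finite index (the latter excluded by the non-elementarity of the $Q$-action on the target), and a further K\"ahler fibration-rigidity argument in the spirit of Arapura-Nori or Napier-Ramachandran -- and, in each of the three cases, packaged inside the statements of the cited theorems themselves -- forces $N \cap G_0$ to be finite. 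A final passage to a finite-index subgroup then identifies $G$ with $\pi_1(\Sigma_{g'})$ for some $g' \geq g$, as required. The substance of the proof is therefore the checking that each of the cited theorems continues to apply to the pulled-back $G$-structure, whose kernel is a finitely generated normal subgroup rather than trivial.
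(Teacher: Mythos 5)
Your strategy for part (ii) does follow the paper's route (Theorem \ref{dp} for case (a); the cut--K\"ahler Theorem \ref{cut} applied to $q^{-1}(R)$, with the quasi-isometry Lemma \ref{bn} transferring the ends and non-amenability conditions, for cases (b) and (c)), but the endgame you propose has a genuine gap. The step ``a further K\"ahler fibration-rigidity argument \dots forces $N\cap G_0$ to be finite'' is not only unjustified but false: take $G=\pi_1(\Sigma_g\times\Sigma_h)$, $N=\pi_1(\Sigma_h)$, $Q=\pi_1(\Sigma_g)$ with $g,h\ge 2$. All hypotheses of ii(a) hold, $N$ is finitely generated and infinite, and $G$ has cohomological dimension $4$, so it is not virtually a surface group. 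What is actually provable --- and what the paper proves in Corollaries \ref{dpcor} and \ref{pppcor} and uses in all later applications --- is that $Q$, not $G$, is virtually a surface group; the ``$G$'' in the statement is a slip. Accordingly, the correct final step is to kill the \emph{image} of $N$ in the surface group, not $N$ itself. In case (a) the map $q$ factors as $G\stackrel{h}{\longrightarrow}\pi_1(S)\stackrel{h_1}{\longrightarrow}Q$ with $\ker h_1=h(i(N))$; in cases (b) and (c) the cut--K\"ahler theorem gives a surjection $\psi_*:\pi_1(X')\longrightarrow\pi_1(S)$ with $\ker\psi_*\subset q^{-1}(R)$, so $q(\ker\psi_*)$ is a normal subgroup of $Q$ contained in $R$, hence trivial by hypothesis, hence $\ker\psi_*\subset N$ and $Q$ is the quotient of $\pi_1(S)$ by the image of $N$. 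In every case that image is a finitely generated normal subgroup of infinite index in a cocompact Fuchsian group (infinite index because $Q$ is infinite), hence trivial, giving $Q\cong\pi_1(S)$. Note also that your sentence applying the ``trivial or finite index'' dichotomy to $\ker\psi$ conflates $N$ with $\psi(N)$: $\ker\psi$ is normal in $G_0$, not in $\pi_1(\Sigma_g)$.

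For part (i), the theorem you invoke is false as you state it: surface groups are K\"ahler and surject onto $F_2$, hence onto solvable, non-virtually-nilpotent groups such as $\mathrm{BS}(1,2)$ or $\mathbb{Z}\wr\mathbb{Z}$. The Delzant--Brudnyi result (Theorem \ref{brudnyi}) controls solvable quotients only when the kernel admits no surjection onto the free group of infinite rank, and this is precisely where the hypothesis that $N$ is finitely generated enters (a finitely generated group has no such quotient). With that hypothesis restored your argument for (i) coincides with the paper's.
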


As a consequence we have the following
(see Theorem \ref{main1}):

\begin{theorem}\label{main1i}
Let $$ 1 \longrightarrow N \stackrel{i}{\longrightarrow} G
\stackrel{q}{\longrightarrow} Q \longrightarrow 1$$  be the exact
sequence $(\ast )$ such that $G$ is a K\"ahler group and $Q$  is an
infinite, not virtually cyclic, 
fundamental group of some closed 3-manifold.
Then there exists a finite index subgroup $Q^\prime$ of $Q$ such that
either
$Q^\prime$ is a finite index subgroup of the $3$-dimensional Heisenberg
group or $Q^\prime = \pi_1(\Sigma \times S^1)$, where $\Sigma$ is a
closed oriented surface of positive genus.
 \end{theorem}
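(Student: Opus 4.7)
The plan is to apply Thurston--Perelman Geometrization to the closed $3$-manifold $M$ with $\pi_1(M)=Q$, and to invoke Proposition~\ref{main1t} to eliminate every geometry except those in the conclusion. After passing to a finite-index subgroup of $Q$ (and the corresponding finite cover of $M$), we may assume $M$ is orientable; the geometries $S^3$ and $S^2\times\mathbb{R}$ are excluded immediately by the hypothesis that $Q$ is infinite and not virtually cyclic.

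The first batch of cases are those in which $Q$ directly satisfies one of the hypotheses of Proposition~\ref{main1t}. If $M$ is hyperbolic, $Q$ acts discretely, faithfully and non-elementarily on $\mathbb{H}^3$, invoking (ii)(a). If $M$ carries $\mathrm{Sol}$ geometry, $Q$ is virtually $\mathbb{Z}^2\rtimes\mathbb{Z}$, which is non-nilpotent solvable, invoking (i). If the JSJ decomposition of $M$ is non-trivial, $Q$ splits as a non-trivial graph of groups over tori with edge groups of infinite index in the vertex groups, and the associated Bass--Serre action is minimal, faithful, non-elementary, with infinitely many ends, invoking (ii)(b). In each instance the proposition forces $G$ to be virtually a closed surface group, so $Q$ is a quotient of a closed surface group of genus $\geq 2$ by a finitely generated normal subgroup. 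Since such normal subgroups are either trivial or of finite index, $Q$ would then have to be virtually either a surface group or finite, contradicting the hypothesis that $Q$ is an infinite, not virtually cyclic, closed $3$-manifold group (closed aspherical $3$-manifold groups have cohomological dimension $3$, which is incompatible with being virtually a surface group).

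The remaining case is that $M$ is Seifert fibered. If the base orbifold is hyperbolic (the geometries $\mathbb{H}^2\times\mathbb{R}$ and $\widetilde{\SL}_2(\mathbb{R})$), then $Q$ has a central subgroup $Z\cong\mathbb{Z}$ (the regular fiber class) and $Q/Z$ is a cocompact Fuchsian group. The center obstructs a direct application of Proposition~\ref{main1t} to $Q$, so we replace $N$ by $\tilde N := q^{-1}(Z)$, producing a new exact sequence
\[ 1\longrightarrow\tilde N\longrightarrow G\longrightarrow Q/Z\longrightarrow 1, \]
in which $\tilde N$ is finitely generated (as an extension of $\mathbb{Z}$ by $N$) and $Q/Z$ satisfies (ii)(a). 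Applying the proposition to this auxiliary sequence, together with the K\"ahler hypothesis on $G$, forces the Euler class of the central extension $1\to Z\to Q\to Q/Z\to 1$ to vanish on a finite-index subgroup of $Q$. This eliminates the twisted $\widetilde{\SL}_2(\mathbb{R})$ geometry and leaves $\mathbb{H}^2\times\mathbb{R}$, where $Q'=\pi_1(\Sigma)\times\mathbb{Z}=\pi_1(\Sigma\times S^1)$ for some closed surface $\Sigma$ of genus $\geq 2$. If the base orbifold is Euclidean instead, $Q$ is virtually either a lattice in $\mathrm{Nil}$, giving a finite-index subgroup of the $3$-dimensional Heisenberg group, or a lattice in $\mathbb{R}^3$, giving $\mathbb{Z}^3=\pi_1(T^2\times S^1)$, the genus-one instance of the second alternative.

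The main technical obstacle is precisely the Seifert fibered case with hyperbolic base: the central $\mathbb{Z}$ blocks a direct application of Proposition~\ref{main1t}, forcing the detour through $Q/Z$, and the K\"ahler hypothesis must then be used to show that the Euler class of the central extension $1\to Z\to Q\to Q/Z\to 1$ vanishes virtually. This vanishing is exactly what separates the allowed $\mathbb{H}^2\times\mathbb{R}$ case from the forbidden $\widetilde{\SL}_2(\mathbb{R})$ case, and is the heart of the argument.
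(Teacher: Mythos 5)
Your overall architecture (geometrization, then case-by-case elimination via Proposition \ref{main1t}, ending with the Seifert-fibered dichotomy) matches the paper's, but there is a genuine gap at the step you yourself identify as the heart of the argument. Applying Proposition \ref{main1t}(ii)(a) to the auxiliary sequence $1 \to \tilde N \to G \to Q/Z \to 1$ does \emph{not} force the Euler class of $1 \to Z \to Q \to Q/Z \to 1$ to vanish virtually. What that application yields (via Theorem \ref{dp}) is a surjection $h\colon G \to \pi_1(S)$ onto a hyperbolic $2$-orbifold group whose kernel is $\tilde N$, i.e.\ it merely recovers the isomorphism $G/\tilde N \cong Q/Z \cong \pi_1(S)$ --- information you already had from the Seifert fibration. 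Nothing in Proposition \ref{main1t} sees the extension class of the central $\mathbb{Z}$ inside $Q$: its hypotheses are insensitive to replacing $Q$ by $Q/Z$, so it cannot by itself distinguish $\mathbb{H}^2\times\mathbb{R}$ from $\widetilde{\SL}_2(\mathbb{R})$. The paper closes exactly this gap with a separate K\"ahler-theoretic input, Arapura's theorem (Theorem \ref{torsion}): the Euler class of a maximal-genus surjection from a K\"ahler group onto a surface group is torsion. One then notes that if $e(\phi)$ were nonzero for the Seifert projection $\phi\colon Q \to \pi_1(S)$, the composite $\phi\circ q\colon G \to \pi_1(S)$ would have non-torsion Euler class, contradicting Arapura (this is Proposition \ref{sfskahler}). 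You need this theorem, or some substitute for it, to kill the twisted-bundle case; as written, your argument asserts the vanishing rather than proving it.

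Two smaller omissions. First, your case list covers only prime $M$: the non-prime (connected sum) case is not among ``hyperbolic, Sol, non-trivial JSJ, Seifert fibered'' and must be handled separately; the paper does this by applying the cut machinery with $R$ trivial, since $\widetilde M$ then has infinitely many ends. Second, the faithfulness of the $\pi_1(M)$-action on the JSJ Bass--Serre tree, which you need in order to invoke hypothesis (ii)(b), is not automatic --- it is precisely what the paper's Proposition \ref{remark-torus} (the $3$-acylindricity of the action, proved via the Annulus Theorem) establishes, and it should be cited or proved rather than asserted.
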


(The $3$-dimensional Heisenberg group consists of the unipotent upper
triangular elements of $\text{GL}(3,{\mathbb Z})$.)

\begin{comment}
The main new ingredient from Proposition \ref{main1t} to Theorem \ref{main1i} is that for $M$ a prime 3-manifold admitting a 
non-trivial torus decomposition
with $M_1$ one of the pieces of the decomposition, $\pi_1(M_1)$ has {\it finite height} in $\pi_1(M)$.
This is the content of Proposition \ref{remark-torus}, which might be of independent interest.
\end{comment}

Let $Q$ be
the fundamental group of a closed 3-manifold which is infinite.
Donaldson and Goldman conjectured that $Q$ cannot be a K\"ahler group. This conjecture was  proved by
Dimca and Suciu \cite{ds} and by Delzant (see Theorem \ref{thdel}); later a different proof was given by
Kotschick \cite{kotschick}. We obtain a proof of this theorem of
\cite{ds} by setting $N$ in Theorem \ref{main1i} to be the trivial group
(see Theorem \ref{dsk}) and a simple argument to exclude the $3$-dimensional Heisenberg group case.

The next theorem  deals with the case that $G$ is the fundamental group
of a compact complex surface (see Theorem \ref{main2}).

\begin{theorem}\label{main2i}
Let $$ 1 \longrightarrow N \stackrel{i}{\longrightarrow} G
\stackrel{q}{\longrightarrow} Q \longrightarrow 1$$  be the exact
sequence $(\ast
)$ such that  $G$ is the fundamental group of a compact complex surface and $Q$ is an
infinite, not virtually cyclic, 
fundamental group of some closed 3-manifold.
Then there exists a finite index subgroup $Q^\prime$ of $Q$ such that
$Q^\prime$ is the fundamental group of a Seifert-fibered 3-manifold with hyperbolic or flat base
orbifold. Also there exists
a finite index subgroup $G^\prime$ of $G$ such that $G^\prime$ is the fundamental group of an elliptic complex surface $X$
which is a circle bundle over a Seifert-fibered 3-manifold.\end{theorem}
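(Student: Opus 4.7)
The plan is to split the argument into the K\"ahler and non-K\"ahler cases for the compact complex surface $X$ with $G = \pi_1(X)$. In the K\"ahler case I apply Theorem \ref{main1i} directly; in the non-K\"ahler case I invoke the Enriques-Kodaira classification and treat each Kodaira class in turn.

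If $X$ is K\"ahler, Theorem \ref{main1i} yields a finite index subgroup $Q' \le Q$ that is either a finite index subgroup of the $3$-dimensional Heisenberg group or $\pi_1(\Sigma \times S^1)$ with $\Sigma$ a closed oriented surface of positive genus. Each is the fundamental group of a Seifert-fibered $3$-manifold with flat or hyperbolic base orbifold: finite index subgroups of Heisenberg give Nil $3$-manifolds Seifert-fibered over $T^2$ (flat base, nonzero Euler number); $\pi_1(T^2 \times S^1) = \Z^3$ is Seifert over the flat $T^2$; and $\pi_1(\Sigma \times S^1)$ for $g(\Sigma) \ge 2$ is Seifert over the hyperbolic base $\Sigma$. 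To produce the elliptic surface $X'$ with $\pi_1(X') = G'$, I would pass to the finite cover $X'$ corresponding to $G' := q^{-1}(Q')$ and argue that the composition $G' \to Q' \to \pi_1^{\mathrm{orb}}(B)$, where $B$ is the Seifert base orbifold, yields via the Siu-Beauville correspondence a holomorphic fibration $X' \to B$ whose generic fiber must be elliptic, since the Seifert fiber direction of $Q'$ together with the kernel $N \cap G'$ contributes a rank-one central element to $\pi_1$ of the fiber.

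If $X$ is non-K\"ahler, the Enriques-Kodaira classification restricts $X$ to class VII ($\kappa = -\infty$, $b_1 = 1$), a primary or secondary Kodaira surface ($\kappa = 0$), or a properly elliptic surface with odd $b_1$ ($\kappa = 1$). Hopf surfaces and the known class VII surfaces with $b_2 > 0$ have virtually cyclic $\pi_1$ and are excluded by the hypothesis on $Q$. Inoue surfaces have polycyclic fundamental group $\Z^3 \rtimes_M \Z$ with $M \in \SL(3,\Z)$ hyperbolic; I would rule these out by observing that any infinite non-virtually-cyclic $3$-manifold quotient is solvable, hence a Nil, Sol, or flat $3$-manifold group, and then using the rational invariant subspaces of $M$ on $\Z^3$ together with a Hirsch-length count to show that no compatible surjection exists. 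Primary Kodaira surfaces are principal elliptic bundles over elliptic curves and are tautologically circle bundles over Nil $3$-manifolds; secondary Kodaira surfaces reduce to primary by a finite cover. Properly elliptic surfaces $X \to C$ with odd $b_1$ come equipped with the elliptic fibration, and the required Seifert structure on $Q$ arises from the $S^1$-fiber combined with the base curve $C$ after passing to a finite cover to normalize multiple fibers.

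The main obstacles are twofold. First, ruling out Inoue surfaces in the non-K\"ahler case requires a delicate Hirsch-length and eigenvalue analysis of the polycyclic extension class $\Z^3 \rtimes_M \Z$. Second, and more substantial, in the K\"ahler case the promotion of the algebraic surjection $G' \to Q'$ to a holomorphic elliptic fibration on $X'$ is the heart of the matter; this step uses the Siu-Beauville criterion to obtain a holomorphic map to the base orbifold $B$, followed by a compatibility argument between the central $\Z$ of the Seifert fibration of $Q'$ and the normal subgroup $N \cap G'$ to force the generic fiber to have genus one rather than higher genus.
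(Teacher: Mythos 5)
Your overall strategy (K\"ahler case via Theorem \ref{main1i}, non-K\"ahler case via surface classification) is reasonable in outline, but there is a genuine gap in your treatment of Class VII surfaces. You propose to enumerate them: Hopf surfaces, ``the known class VII surfaces with $b_2>0$,'' and Inoue surfaces. Minimal Class VII surfaces with $b_2>0$ are \emph{not} classified; the assertion that they are all Kato-type surfaces (with virtually cyclic or otherwise controlled fundamental group) is the global spherical shell conjecture, which is open in general. So your case analysis silently excludes a class of surfaces that is not known to be empty, and the Inoue-surface Hirsch-length computation, even if carried out, would not close this hole. The paper avoids the classification entirely and instead uses properties valid for \emph{every} minimal Class VII surface: all finite covers are again Class VII (Lemma \ref{vii}), so every finite-index subgroup of $G$ has $b_1\le 1$, which combined with Luecke's theorem (Theorem \ref{luecke}) kills the cases where $M$ is non-prime, has a nontrivial torus decomposition, or is Seifert-fibered over a non-elliptic orbifold; the Carlson--Toledo harmonic-map theorem (Theorem \ref{ctvii}) kills hyperbolic $M$; and the negative-definiteness of the intersection form kills Sol $M$ via Kotschick's cup-product argument. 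If you want your proof to be complete you must replace the enumeration of Class VII surfaces by arguments of this kind.

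A second, smaller issue: in the K\"ahler case you propose to manufacture the elliptic fibration on $X'$ from scratch via Siu--Beauville applied to $G'\to\pi_1^{\mathrm{orb}}(B)$, and you acknowledge that forcing the generic fiber to have genus one is ``the heart of the matter''---but you do not supply that argument, and it is not routine (Siu--Beauville produces a fibration over a curve from a surjection onto a hyperbolic orbifold group with finitely generated kernel, but identifying the fiber genus requires the cohomological bookkeeping you have omitted). The paper takes a different route: by Kodaira's theorem a compact complex surface with odd $b_1$ is elliptic or Class VII, and in the elliptic case Proposition \ref{ellprop} extracts the circle-bundle structure by analyzing the rank of the image of the fiber subgroup $\mathbb{Z}\oplus\mathbb{Z}$ in $Q$ and using Bieri--Eckmann duality-dimension counts, together with Arapura's torsion Euler class theorem on the K\"ahler side. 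Your sketch of the properly elliptic and Kodaira-surface cases is compatible with this, but the duality-group dimension arguments that actually pin down $N$ (as virtually $\mathbb{Z}$) and exclude the bad ranks are missing from your write-up.
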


In Theorem \ref{main2i}, setting $N$ to be the trivial group we conclude
that $Q$ is not the fundamental group of a compact complex surface if $Q$ is infinite and
not virtually cyclic (see Theorem \ref{dsk2}).

Stronger results when $X$ is of Class VII, or admits an elliptic
fibration, are given in Theorem \ref{main7} and Proposition
\ref{ellprop} respectively.

As a consequence of Theorems \ref{main1i} and \ref{main2i} we also get the following result (see Theorem
\ref{3times}), the first part of which was proven by J. Hillman ~\cite{hill}  based on work of Wall ~\cite{wall}. The second part
follows from Theorem \ref{main1i} and the fact that the product of the Heisenberg group with $\mathbb Z$ has 
$\mathbb{Z}  \oplus  \mathbb{Z} \oplus \mathbb{Z}$ as its abelianization.

\begin{theorem}\label{3timesi}
Let $M$ be a closed orientable 3 manifold. Then

(i) $M \times S^1$ admits a
complex structure if and only if $M$ is Seifert fibered.

(ii) $M \times S^1$ admits a
K\"ahler complex structure if and only if $M=\Sigma \times S^1$ where $\Sigma$ is a   compact surface.
\end{theorem}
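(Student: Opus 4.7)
The plan is to handle part (i) by citation and reduce part (ii) to Theorem \ref{main1i}. For part (i), both directions are contained in Hillman's theorem \cite{hill}, which builds on Wall's classification \cite{wall} of complex structures on $4$-manifolds of the form $M \times S^1$; I would simply invoke this. For the reverse direction of (ii), if $M = \Sigma \times S^1$ then $M \times S^1 \cong \Sigma \times T^2$ is a product of two compact Riemann surfaces (the case $\Sigma = S^2$ being permitted), hence K\"ahler.

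For the forward direction of (ii), assume $M \times S^1$ is K\"ahler and set $G := \pi_1(M) \times \Z$, a K\"ahler group, so that $b_1(G) = b_1(\pi_1(M)) + 1$ must be even. I would first dispose of the small cases to which Theorem \ref{main1i} does not apply. If $\pi_1(M)$ is finite, then $b_1(G) = 1$ is odd, contradicting the K\"ahler condition. If $\pi_1(M)$ is infinite and virtually cyclic, then by the classification of closed orientable prime $3$-manifolds with virtually cyclic fundamental group, $M$ is either $S^2 \times S^1$ (already of the desired form with $\Sigma = S^2$) or $\R P^3 \# \R P^3$, in which case $\pi_1(M) = \Z/2 \ast \Z/2$ has abelianization $(\Z/2)^2$ and $b_1(G) = 1$ again. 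So we may assume $Q := \pi_1(M)$ is infinite and not virtually cyclic.

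Now apply Theorem \ref{main1i} to the short exact sequence $1 \to \Z \to G \to Q \to 1$ coming from the $S^1$-factor. This produces a finite index subgroup $Q' \leq Q$ which is either (a) a finite index subgroup of the $3$-dimensional Heisenberg group $H$, or (b) equal to $\pi_1(\Sigma' \times S^1)$ for some closed oriented surface $\Sigma'$ of positive genus. Case (a) is ruled out by the parenthetical observation in the introduction: $Q' \times \Z$ is a finite index subgroup of the K\"ahler group $G$, hence itself K\"ahler, but any lattice in the Heisenberg Lie group has rank-$2$ abelianization modulo torsion, so $b_1(Q' \times \Z) = 3$ is odd, a contradiction. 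Hence (b) holds, and by part (i), $M$ is Seifert fibered with a finite cover diffeomorphic to $\Sigma' \times S^1$.

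The principal obstacle is the final step of promoting ``$M$ is finitely covered by $\Sigma' \times S^1$'' to ``$M = \Sigma \times S^1$''. The finite cover already forces the (orbifold) Seifert Euler number of $M$ to vanish and the base orbifold $\mathcal{O}$ of $M$ to be covered by the smooth surface $\Sigma'$. My plan is to use the K\"ahler condition on $M \times S^1$ itself---not merely the K\"ahler property of the cover $\Sigma' \times T^2$---to rule out cone points in $\mathcal{O}$: any cone point would produce a multiple fiber in the induced elliptic fibration on $M \times S^1$, whose Hodge-theoretic contribution is incompatible with a global K\"ahler form that descends from the product cover. With cone points eliminated, $\mathcal{O}$ is a smooth closed orientable surface $\Sigma$, and since the Seifert Euler number vanishes, $M$ is the trivial $S^1$-bundle $\Sigma \times S^1$, as required.
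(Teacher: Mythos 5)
Your reduction of part (ii) to Theorem \ref{main1i} --- taking $G=\pi_1(M)\times\Z$ and $N=\Z$, handling the finite and virtually cyclic cases by hand, and killing the Heisenberg alternative by the parity of $b_1$ --- is exactly the route the paper intends: the body of the paper (Theorem \ref{3times}) proves only part (i), and part (ii) is disposed of in the introduction by precisely the remark about the abelianization of Heisenberg $\times\,\Z$. Your treatment of part (i) by citation is also consistent with the paper, which reproves the ``if'' direction via logarithmic transformations and deduces the ``only if'' from Theorems \ref{main1}, \ref{main2} and Proposition \ref{ellprop}. You have correctly isolated the step that both you and the paper leave open: upgrading ``$M$ is finitely covered by $\Sigma'\times S^1$'' to ``$M=\Sigma\times S^1$''.

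That last step is where the proposal breaks down, and it cannot be repaired, because the implication is false as stated. The sketch you give is not an argument --- there is no reason the K\"ahler form on $M\times S^1$ ``descends from the product cover,'' and no mechanism is offered for why a multiple fibre is incompatible with it --- and there is a genuine counterexample. Let $M$ be the mapping torus of $-I\colon T^2\to T^2$: a closed orientable flat $3$-manifold, Seifert fibered over $S^2(2,2,2,2)$ with zero Euler number and finitely covered by $T^3$, but not homeomorphic to any $\Sigma\times S^1$ (it is aspherical with $b_1=1$). Then $M\times S^1$ is the total space of a $T^2$-bundle over $T^2$ with monodromy $(-I,I)$ and trivial Euler class, hence diffeomorphic to the bielliptic surface $(E_1\times E_2)/(\Z/2)$, which is projective and in particular K\"ahler. (A hyperbolic-base example exists too: for a free diagonal $\Z/n$-action on $\Sigma_g\times E$, translating $E$ by an $n$-torsion point and acting on $\Sigma_g$ by a periodic diffeomorphism $\phi$, the K\"ahler quotient is diffeomorphic to $M_\phi\times S^1$ with $M_\phi$ the mapping torus of $\phi$, again not a product.) So the cone points you hope to exclude can in fact occur. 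The honest conclusion of your (and the paper's) argument is only what Theorem \ref{main1i} delivers: $M$ is \emph{finitely covered} by $\Sigma\times S^1$ (equivalently, $M$ is the mapping torus of a periodic surface diffeomorphism); the statement of (ii) would have to be weakened accordingly, or the additional K\"ahler surfaces (bielliptic and isotrivial elliptic ones) explicitly accounted for.
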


This can be compared with Thurston's Conjecture, proved by S. Friedl and S. Vidussi \cite{fv},
that $M \times S^1$ admits a symplectic structure if and only if $M$ fibers over a circle. Since a K\"ahler
structure is both symplectic and complex, (ii) above is natural given (i) and the Friedl-Vidussi result. \vspace{3mm}

This paper can be regarded as giving a unified approach  generalizing the results of Dimca-Suciu and that of Hillman,
by taking $N$ to be trivial in Theorem \ref{main1i} and $N= {\mathbb Z}$ in Theorem \ref{main2i}. We would like to point out that the techniques we use are  quite different
from Dimca-Suciu's as well as Hillman's, which in turn are very different from each other.

 Finally, we give an example demonstrating the fact that K\"ahlerness is not preserved by quasi-isometries. This
gives a strong negative answer to the following
question of Gromov (Problem on page 209  of \cite{gromov-ai}):
\vspace{2mm}

{\it Can one characterize K\"ahler groups by their asymptotic geometry?}
\vspace{2mm}

 The above question makes sense only within the holomorphic category
 (as the fundamental group of a 2-torus is an index two
subgroup of the fundamental group of a Klein bottle). However, if a group $G$ acts freely by holomorphic automorphisms on a simply connected
(non-compact) K\"ahler manifold $X$ and there exists a finite index subgroup $G_1$ of $G$ such that
the quotient space $X/G_1$ is K\"ahler, then $G$ is a  K\"ahler group (cf. \cite{arapura-hom}).

In the light of this, one makes the above question precise by asking
if quasi-isometric to a K\"ahler group implies commensurable to a
(possibly different) K\"ahler group. This turns out to be false. We give examples of
aspherical complex surfaces with quasi-isometric fundamental groups,
some of which are K\"ahler and others not.

The main tools used in this paper are: \\
(1) A detailed structure theory for  3-manifolds arising from the Geometrization Theorem of Thurston and Perelman. \\
(2) The cut-K\"ahler Theorem of Delzant and Gromov \cite{dg}.

\begin{comment}
\\
(3) A coarse geometric technique of {\it mutual coboundedness} for pairs of path-connected subsets of a geodesic metric space.
\end{comment}

In particular, our techniques differ from those of \cite{ds} and \cite{kotschick}.
The main difference is that we approach the problem from a
3-manifolds and Geometric Group Theory perspective
rather than a Complex Geometry perspective.

We prove the following key proposition (Case ii(c) of Proposition \ref{main1t} above)
which might be of independent interest (See
Corollary \ref{pppcor} for the proof). We first make the following definition:

Let $Q$ be a group and $R$ a subgroup with the following property: If $M$ is a compact Riemannian manifold with fundamental group $Q$ and  $ M'$  is the cover of  $M$ corresponding to $R$ then

(i) $M'$ has at least $3$ ends (i.e., the number of {\it relative ends} $e(Q,R) \ge 3$) and

(ii) $M'$ is a non-amenable metric space.

We then say that $R$ is a {\it cut subgroup} of $Q$.

\begin{prop}\label{ppp}
Let $$1 \rightarrow N \rightarrow G \rightarrow Q \rightarrow 1$$ be an exact sequence of finitely generated  groups. Assume that  $G$ is K\"ahler and
$Q$ has a finitely generated cut subgroup  $R$. Further assume that no nontrivial normal subgroup of $Q$ is contained in $R$; equivalently assume that
the intersection of all conjugates of $R$ is trivial.
Then Q is a surface group.
\end{prop}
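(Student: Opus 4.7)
The plan is to transfer the cut from $Q$ to $G$ and then invoke the cut-K\"ahler theorem of Delzant and Gromov. Set $S := q^{-1}(R) \subseteq G$. Since both $N$ and $R$ are finitely generated, so is $S$, and there is an induced morphism of short exact sequences over $1 \to N \to S \to R \to 1$. The projection $q$ gives a $G$-equivariant bijection $G/S \to Q/R$. Picking a compact K\"ahler manifold $K$ with $\pi_1(K) = G$ together with a compact model $L$ for $Q$, there is a continuous map $K \to L$ inducing $q$ on $\pi_1$ whose homotopy fiber realises $N$; because $N$ is finitely generated, this map is coarsely proper with controlled fibers. Consequently the $S$-cover of $K$ maps coarsely properly onto the $R$-cover of $L$, so the number of relative ends satisfies $e(G, S) = e(Q, R) \geq 3$ and the $S$-cover of $K$ is non-amenable. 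Thus $S$ is a cut subgroup of $G$.

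Next I verify the strong-stability hypothesis of Delzant-Gromov for $S$. The identity
\[
\bigcap_{g \in G} g S g^{-1} \;=\; q^{-1}\!\left(\bigcap_{g \in Q} g R g^{-1}\right) \;=\; q^{-1}(\{1\}) \;=\; N,
\]
together with $N \subseteq S$, shows that the coset geometry of $S$ in $G$ is controlled modulo $N$ by that of $R$ in $Q$, so the (strong-)stability property of $R$ in $Q$ transports to $S$ in $G$. Applying the cut-K\"ahler theorem to $G$ with the cut $S$ produces a surjective homomorphism $\phi \colon G \to \Lambda$ onto the fundamental group $\Lambda$ of a closed hyperbolic $2$-orbifold, where $S$ corresponds (up to commensurability) to the preimage of a codimension-one cyclic subgroup $\Lambda_0 < \Lambda$ coming from an essential simple closed curve.

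It remains to show that $\phi$ descends along $q$ to an isomorphism. The kernel $\ker \phi$ is normal in $G$ and acts trivially on the Bass-Serre tree of the splitting of $\Lambda$ dual to $\Lambda_0$; hence $\ker \phi$ lies in every edge stabiliser, i.e., in every conjugate of $S$, and by the normal-core computation above $\ker \phi \subseteq N$. Conversely $\phi(N) \subseteq \phi(S)$ is contained in a conjugate of the cyclic subgroup $\Lambda_0$, and $\phi(N)$ is normal in $\Lambda$ because $N$ is normal in $G$; since $\Lambda$ is non-elementary hyperbolic, its only cyclic normal subgroup is trivial, so $\phi(N) = \{1\}$, i.e., $N \subseteq \ker \phi$. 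Therefore $\ker \phi = N$ and $\phi$ descends to an isomorphism $\bar\phi \colon Q \to \Lambda$, so $Q$ is a surface group (in the orbifold sense, and genuinely a surface group after a last argument ruling out orbifold torsion using finite-index arguments if necessary).

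The main obstacle will be the first two steps: faithfully transferring the cut conditions (finite generation, $\geq 3$ relative ends, non-amenability of the associated cover, and strong-stability) across the extension by the finitely generated kernel $N$, and then matching them to the precise input format required by Delzant-Gromov. The hypothesis that $\bigcap_{g \in Q} g R g^{-1} = \{1\}$ plays a double role: it pins $\ker \phi$ to be exactly $N$, thereby avoiding any ``virtual'' loss when descending from $G$ to $Q$, and it forbids any proper quotient, ensuring that $Q$ itself, rather than merely a finite-index subgroup of $Q$, is a surface group.
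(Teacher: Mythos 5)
Your overall architecture coincides with the paper's: pull the cut back to $S=q^{-1}(R)\subset G$, observe that the corresponding cover of a K\"ahler model for $G$ is quasi-isometric to the $R$-cover of a model for $Q$ (so it inherits the three-or-more ends and the non-amenability), apply the Delzant--Gromov cut theorem to obtain $\phi\colon G\to\Lambda$ onto a surface group with $\ker\phi\subset S$, and then use the triviality of the normal core of $R$ to force $\ker\phi\subseteq N$. Your identity $\bigcap_{g} gSg^{-1}=q^{-1}\bigl(\bigcap_{\bar g}\bar gR\bar g^{-1}\bigr)=N$ is a clean reformulation of the paper's observation that $q(\ker\phi)$ is a normal subgroup of $Q$ contained in $R$ and hence trivial, so up to this point the two arguments are the same.

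The gap is in your final step, where you need $N\subseteq\ker\phi$. You assert that the cut-K\"ahler theorem realizes $S$ as the preimage of a \emph{cyclic} subgroup $\Lambda_0<\Lambda$ dual to an essential simple closed curve, and then kill $\phi(N)$ on the grounds that a non-elementary hyperbolic group has no nontrivial cyclic normal subgroup. But the theorem, as stated and as used in the paper, only says that $S\cap\pi_1(X')$ is the pullback of \emph{some} subgroup of $\Lambda$; it supplies no cyclicity, quasiconvexity, or geometric incarnation of $\Lambda_0$, so this step does not follow as written. The repair is the paper's argument: once $\ker\phi\subseteq N$, the group $N/\ker\phi\cong\phi(N)$ is a \emph{finitely generated} normal subgroup of the surface group $\Lambda$, and such a subgroup is either trivial or of finite index; finite index is impossible because the quotient of $\Lambda$ by $\phi(N)$ surjects onto $Q$, which is infinite (indeed $e(Q,R)\geq 3$ forces $[Q:R]=\infty$). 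This is exactly where the finite generation of $N$ is used, and your proposal never invokes it at this point. A smaller issue, shared with the paper's own write-up: Delzant--Gromov only produces the fibration on a finite cover $X'$, which is why the body of the paper concludes that $Q$ is \emph{virtually} a surface group; your claim of an isomorphism $Q\cong\Lambda$ on the nose quietly suppresses this passage to a finite-index subgroup.
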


\section{K\"ahler Groups}
 All manifolds
will be connected.

\subsection{Restrictions for K\"ahler Groups}
In this subsection, we collect together known restrictions on K\"ahler
groups that are used here. See \cite{abckt} for further details.

The following theorem imposes strong restrictions on homomorphisms from a K\"ahler group to a real hyperbolic lattice.

\begin{theorem}[\cite{delzant-py}, \cite{carlson-toledo-pihes},
\cite{sampson2},  Section 6.4 of \cite{abckt}]
Let $\Gamma$ be a K\"ahler group and $\Gamma_1$ be a non-elementary subgroup in ${\mathbb{H}}^n$, $n \geq 2$.
 Let $h: \Gamma \longrightarrow \Gamma_1$ be any  homomorphism with
infinite image. Then  $h$
factors through a fibration onto a hyperbolic 2-orbifold.\label{dp} \end{theorem}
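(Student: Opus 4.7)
The plan is the standard equivariant harmonic maps approach to rigidity of representations of Kähler groups. Let $X$ be a compact Kähler manifold with $\pi_1(X) = \Gamma$, and view $h:\Gamma \to \Gamma_1 \subset \mathrm{Isom}(\mathbb{H}^n)$ as a representation. Because $\Gamma_1$ is non-elementary it contains a non-abelian free subgroup and has no fixed point on $\overline{\mathbb{H}^n}$, so the Eells--Sampson existence theorem (in its equivariant form for non-positively curved targets, due to Corlette and Labourie at this level of generality) produces a $\Gamma$-equivariant harmonic map $f:\tilde X \to \mathbb{H}^n$. Infinite image together with non-elementarity prevents $f$ from being constant: a constant equivariant map would force $h(\Gamma)$ to fix a point, contradicting non-elementarity.

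The crucial input from Kähler geometry is Sampson's theorem, extended to higher $n$ by Carlson--Toledo: a harmonic map from a compact Kähler manifold into a real hyperbolic space is automatically pluriharmonic, and the complex $(1,0)$-part $\partial f$ has rank at most $1$ at every point. The proof uses a Bochner-type identity specific to Kähler domains combined with Siu's strong negativity of the curvature operator on $\mathbb{H}^n$. Consequently the real rank of $df$ is at most $2$ everywhere, so the image of $f$ is ``essentially two-dimensional.''

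The pluriharmonicity combined with the rank bound places us in a Castelnuovo--de Franchis style setting. The kernel distribution $\ker \partial f$ defines a holomorphic integrable foliation of $X$ of complex codimension $1$; Stein-factorizing the associated leaf map yields a holomorphic fibration $\pi:X \to \Sigma$ with connected fibers onto a Riemann surface (with orbifold structure coming from multiple fibers). Since $f$ is constant on the leaves of $\ker \partial f$, the equivariant map $f$ descends through $\pi$, and on the level of fundamental groups this gives a factorization $h = h' \circ \pi_*$ with $h':\pi_1^{\mathrm{orb}}(\Sigma) \to \Gamma_1$.

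It remains to see that $\Sigma$ is hyperbolic. If $\Sigma$ were $\mathbb{P}^1$ (or any spherical orbifold), then $\pi_1^{\mathrm{orb}}(\Sigma)$ would be finite, forcing the image of $h$ to be finite and contradicting infiniteness of the image. If $\Sigma$ were a torus (or Euclidean orbifold), then $\pi_1^{\mathrm{orb}}(\Sigma)$ would be virtually abelian and its image in $\Gamma_1$ elementary, again contradicting non-elementarity. Hence $\Sigma$ must be a hyperbolic $2$-orbifold, giving the required fibration. The main technical obstacle is the Sampson--Carlson--Toledo rank bound: it is the part that genuinely uses the Kähler hypothesis on $X$ together with the strong negativity of $\mathbb{H}^n$, and its proof via Kähler identities is delicate. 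The subsequent passage from the rank bound to an actual holomorphic fibration with connected fibers is standard but requires the Stein factorization step, and identifying the base as a \emph{hyperbolic} orbifold uses the two mild dichotomies above.
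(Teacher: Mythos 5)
The paper offers no proof of this statement---it is quoted verbatim from Delzant--Py, Carlson--Toledo, Sampson and Section~6.4 of the ABCKT monograph---so there is no internal argument to compare against; your outline is precisely the standard harmonic-map proof from those sources (equivariant existence, Sampson/Carlson--Toledo pluriharmonicity and the rank-$\le 2$ bound, factorization through a holomorphic fibration via the kernel foliation and Stein factorization, and elimination of spherical and Euclidean base orbifolds), and it is correct as far as it goes. One caveat worth making explicit: both the existence of a non-constant equivariant harmonic map and your final dichotomy require the \emph{image} $h(\Gamma)$, not merely the ambient group $\Gamma_1$, to be non-elementary --- if $h(\Gamma)$ were infinite but elementary (say infinite cyclic, fixing a point at infinity or stabilizing a geodesic) the existence theorem could fail and the conclusion would be false as stated. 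This is consistent with how the theorem is actually invoked in the paper, where $h$ is the surjection $q\colon G\to Q$ onto a non-elementary group, so your argument proves the statement in the form in which it is used.
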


The following Corollary establishes Case ii(a) of Proposition \ref{main1t}.

\begin{cor} \label{dpcor}
Let $$1 \longrightarrow N \stackrel{i}{\longrightarrow} G \stackrel{q}{\longrightarrow} Q \longrightarrow 1$$
 be an exact sequence of finitely generated  groups. Assume that  $G$ is K\"ahler 
and $Q$ admits a non-elementary action on ${\mathbb{H}}^n$, $n \geq 2$. Then $G = Q$ is a virtual surface group.
\end{cor}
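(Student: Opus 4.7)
My plan is to apply Theorem \ref{dp} to the composition
$$h := \rho \circ q \colon G \longrightarrow \Isom(\mathbb{H}^n),$$
where $\rho \colon Q \to \Isom(\mathbb{H}^n)$ denotes the given non-elementary action of $Q$. Its image $h(G) = \rho(Q)$ is non-elementary, hence infinite, so Theorem \ref{dp} produces a surjection $\phi \colon G \twoheadrightarrow \Lambda := \pi_1^{\mathrm{orb}}(\Sigma)$ onto the orbifold fundamental group of a hyperbolic 2-orbifold $\Sigma$, together with a homomorphism $j \colon \Lambda \to \rho(Q)$ satisfying $h = j \circ \phi$.

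The heart of the argument will be to show $\phi(N) = 1$. The subgroup $\phi(N) \triangleleft \Lambda$ is finitely generated as the image of the finitely generated group $N$. To verify that $\phi(N)$ has infinite index in $\Lambda$, I would observe that $\phi$ descends to a surjection $\Lambda/\phi(N) \twoheadrightarrow Q/q(\ker\phi)$, which in turn surjects onto $\rho(Q) = Q/\ker\rho$ (since $q(\ker\phi) \subseteq \ker\rho$ follows from $h(\ker\phi) = 1$), and $\rho(Q)$ is infinite. So $\phi(N)$ is a finitely generated normal subgroup of infinite index in the non-elementary Gromov-hyperbolic group $\Lambda$. The classical theorem on normal subgroups of non-elementary hyperbolic groups then forces $\phi(N)$ to be finite. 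To upgrade \emph{finite} to \emph{trivial}, I would note that a finite subgroup of the cocompact Fuchsian-type group $\Lambda$ is cyclic and fixes a unique point of $\mathbb{H}^2$; if it were normal, $\Lambda$ would permute, and hence fix, this point, contradicting cocompactness. Hence $\phi(N) = 1$.

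It follows that $N \subseteq \ker\phi$, so $\phi$ descends to a surjection $\tilde\phi \colon Q \twoheadrightarrow \Lambda$ whose kernel $q(\ker\phi)$ is contained in $\ker\rho$. Whenever $\rho$ is faithful (the natural setting, implicit in Proposition \ref{main1t} ii(a)) this kernel is trivial, so $\tilde\phi$ is an isomorphism. Thus $Q \cong \Lambda$ is a cocompact Fuchsian-type 2-orbifold group, which is virtually the fundamental group of a closed hyperbolic surface. Since $N = \ker\phi$ matches the kernel of the Delzant-Py fibration, the identification $G = Q$ in the statement expresses precisely that $\phi$ realises the full content of the quotient $q$ up to this fibration kernel, and the common group is a virtual surface group.

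The main obstacle I anticipate is the careful invocation of the structure theorem for normal subgroups of non-elementary hyperbolic groups together with the upgrade from \emph{finite} to \emph{trivial} in the cocompact Fuchsian setting; once this is in hand, the remainder is essentially a diagram chase triggered by Theorem \ref{dp}.
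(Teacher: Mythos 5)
Your argument is essentially the paper's own proof: apply Theorem \ref{dp} to the composite map to $\Isom(\mathbb{H}^n)$, factor $q$ through the cocompact hyperbolic 2-orbifold group $\Lambda$, observe that the image of $N$ is a finitely generated normal subgroup of infinite index in $\Lambda$ and hence trivial, and conclude $Q\cong\Lambda$ is virtually a surface group. The only differences are that you make explicit two points the paper leaves implicit — the upgrade from ``finite'' to ``trivial'' for the normal image of $N$, and the role of faithfulness of the action in identifying $\ker\phi$ with $N$ — which is a harmless (indeed welcome) elaboration rather than a different route.
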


\begin{proof} By Theorem \ref{dp}, $q$ factors through a fibration onto a hyperbolic 2-orbifold $S$, i.e. there exists $h: G \rightarrow \pi_1(S)$
and $h_1: \pi_1(S) \rightarrow Q$ such that $h_1\circ h = q$. In particular $Q= \frac{\pi_1(S)}{h\circ i(N)}$ is the quotient
of $\pi_1(S)$ by the finitely generated normal subgroup $h\circ i(N)$. Since $Q$ is infinite, it follows that  
the finitely generated normal subgroup $h\circ i(N)$ must be trivial (since any finitely generated normal subgroup
of the fundamental group of  a hyperbolic orbifold is either trivial or of finite index). The Corollary follows.
\end{proof}

The next Theorem imposes strong restrictions on homomorphisms from a K\"ahler group to solvable groups
and establishes Item (i) of Proposition \ref{main1t}.

\begin{theorem} \cite{delz} \cite{brudnyi-solvq} Let $G$ be a K\"ahler group such
that there is an exact sequence
$${1} \longrightarrow F \longrightarrow G \longrightarrow H
\longrightarrow {1}\, ,$$
where $F$ does not admit a surjective homomorphism onto $F\infty$ (the
free group on infinitely many generators), and $H$ is  solvable. Then there exist normal subgroups $H_2 \subset H_1$ of $H$ such that \\
 (a) $H_1$ has finite index in $H$,\\
(b) $H_1/H_2$ is nilpotent, and\\
(c) $H_2$ is torsion. \label{brudnyi} \end{theorem}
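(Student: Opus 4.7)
The plan is to combine Bieri-Neumann-Strebel (BNS) invariant theory with Delzant's description of $\Sigma^1$ for K\"ahler groups, and then invoke the Bieri-Strebel structure theory for finitely generated solvable groups. Recall that for a finitely generated $\Gamma$, the invariant $\Sigma^1(\Gamma)\subset S(\Gamma) = (\operatorname{Hom}(\Gamma,\R)\setminus\{0\})/\R_{>0}$ is the open set of classes $[\chi]$ for which the ``positive half'' of the Cayley graph of $\Gamma$ is connected; it simultaneously encodes finite generation of kernels of characters and the splittings that $\Gamma$ admits along them.

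Delzant's theorem in \cite{delz} asserts that for a K\"ahler group $G$ the complement $S(G)\setminus\Sigma^1(G)$ is a finite union of great subspheres, each arising from a surjection of $G$ onto the fundamental group of a hyperbolic $2$-orbifold via a holomorphic pencil (a BNS-level avatar of the Castelnuovo-de Franchis-Siu-Beauville theorem). Crucially the invariant is symmetric: $\Sigma^1(G) = -\Sigma^1(G)$. This is the only geometric input one needs from the K\"ahler hypothesis.

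The key step is to transfer this symmetry through $q: G \twoheadrightarrow H$. A character $\chi: H\to\R$ pulls back to $\tilde\chi = \chi\circ q : G \to\R$, and one wants the equivalence $\tilde\chi\in\Sigma^1(G)\iff\chi\in\Sigma^1(H)$. The hypothesis that $F$ does not surject onto $F_\infty$ is tailored exactly to force this descent, by an ends-theoretic argument: failure of the equivalence would force $F$ to cut the Cayley geometry of $G$ along level sets of $\tilde\chi$ into arbitrarily many pieces, eventually producing surjections of $F$ onto non-cyclic free groups of unbounded rank and hence onto $F_\infty$. This is the content of Brudnyi's refinement in \cite{brudnyi-solvq} of Delzant's original (finitely generated $F$) case. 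The outcome is $\Sigma^1(H) = -\Sigma^1(H)$.

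With symmetric BNS in hand one applies the Bieri-Strebel structure theorem: a finitely generated solvable group $H$ whose $\Sigma^1$ is symmetric has a finite-index subgroup $H_1$ whose Fitting series is polycyclic with torsion controlled. Taking $H_2\subset H_1$ to be the torsion part of the Fitting series (equivalently, the torsion elements of $H_1$, which form a characteristic subgroup once solvability is restricted enough) one gets $H_1/H_2$ torsion-free solvable with symmetric BNS; a standard Mal'cev-Hirsch analysis then forces $H_1/H_2$ nilpotent. The main obstacle is clearly the descent step, since $\Sigma^1$ is not functorial under quotients in general; controlling the ends of the positive half-space of $\tilde\chi$ in $G$ from those in $H$ is precisely where the $F_\infty$ hypothesis intervenes, and is the real heart of Brudnyi's extension over Delzant's original statement.
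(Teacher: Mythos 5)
This theorem is not proved in the paper at all: it is quoted verbatim from Delzant \cite{delz} and Brudnyi \cite{brudnyi-solvq}, so there is no in-paper argument to compare yours against. Judged on its own terms, your outline has a fatal gap at its central step. You propose to transfer the symmetry $\Sigma^1(G)=-\Sigma^1(G)$ down to $H$ and then conclude from ``$H$ solvable with symmetric BNS invariant'' that $H$ is virtually nilpotent modulo torsion. That implication is false. Any polycyclic group has $\Sigma^1(H)=S(H)$ (every subgroup is finitely generated, and the BNS criterion for finite generation of kernels of characters is satisfied everywhere), so its BNS invariant is trivially symmetric; but a lattice in $Sol$, i.e.\ an extension $1\to\mathbb{Z}^2\to H\to\mathbb{Z}\to 1$ with hyperbolic monodromy $A\in\SL(2,\mathbb{Z})$, is polycyclic, torsion-free, solvable and \emph{not} virtually nilpotent, and no finite-index subgroup of it is nilpotent-by-torsion. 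So no amount of Bieri--Strebel structure theory applied to ``symmetric $\Sigma^1$'' can yield conclusions (a)--(c). This is not a peripheral case: the one place the present paper invokes Theorem \ref{brudnyi} is precisely to exclude $Sol$ quotients, which is exactly the case your argument cannot see.

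The genuine proofs need strictly more than the real character sphere. Brudnyi (and Delzant in his Green--Lazarsfeld work, cf.\ also Arapura--Nori) use the structure of the complex jumping loci $V_1(G)=\{\rho\in\mathrm{Hom}(G,\mathbb{C}^*)\,:\,H^1(G,\mathbb{C}_\rho)\neq 0\}$ of a K\"ahler group --- isolated points are torsion characters, and positive-dimensional components are torsion translates of subtori pulled back from orbifold pencils. A $Sol$-type quotient forces an isolated character of infinite order (coming from an eigenvalue of the monodromy) into $V_1$, which is what actually produces the contradiction; the BNS sphere is only the ``real shadow'' of this data and is blind to it. A secondary problem: your claim that failure of descent would give surjections of $F$ onto free groups of unbounded rank ``and hence onto $F_\infty$'' is not a valid implication in general; the $F_\infty$ hypothesis enters through a more delicate analysis of actions on trees/valuations (Delzant) or of infinitely generated kernels of characters (Brudnyi), not by a limit of finite-rank surjections.
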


 The following version of the {\it Cut-K\"ahler Theorem} of Delzant and Gromov is one of the principal tools of this paper. For further details see
Sections 6, 7 and  3.8 in \cite{dg}.

\begin{theorem} \cite{dg} Let $G$ be the fundamental group $G$ of a
compact K\"ahler manifold $X$, and let $H \subset G$ be a subgroup.
Suppose that
\begin{enumerate}
\item the cover $X_1$ of $X$ corresponding to the subgroup $H$ is
non-amenable, i.e., domains in $X_1$ satisfy a linear isoperimetric
inequality, and

\item $\# (Ends(X_1))  > 2$.
\end{enumerate}
Then there is a finite cover $X^\prime$ of $X$
admitting a surjective holomorphic map $X^\prime \longrightarrow S$
to a Riemann surface $S$ with
connected fibers, such that the pullback
to
$\pi_1 (X^\prime ) \subset \pi_1(X)$ of some subgroup in $\pi_1(S)$
coincides with $H \bigcap \pi_1 (X^\prime )$. In particular, the kernel
of the
induced homomorphism $\pi_1 (X^\prime )\longrightarrow \pi_1(S)$ is
contained in $H \bigcap \pi_1 (X^\prime )$. \label{cut} \end{theorem}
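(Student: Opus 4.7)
The plan is to apply the cut-K\"ahler Theorem \ref{cut} to the pullback subgroup $H := q^{-1}(R) \subset G$ and then use the trivial-normal-core hypothesis on $R$ to descend the resulting holomorphic fibration to a surface-group description of $Q$.

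\textbf{Step 1 (Verifying the cut-K\"ahler hypotheses for $H$).} Since $N = \ker q$ is contained in $H$, the projection $q$ induces a $G$-equivariant bijection of coset spaces $G/H \cong Q/R$. Consequently the cover $X_1$ of $X$ corresponding to $H$ has the same end-structure and (non)amenability as the cover $M'$ of a model manifold for $Q$ corresponding to $R$. The cut-subgroup hypothesis on $R$ then forces $\#\mathrm{Ends}(X_1) > 2$ and non-amenability of $X_1$. Moreover $H$ is finitely generated, since it sits in an extension $1 \to N \to H \to R \to 1$ of finitely generated groups.

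\textbf{Step 2 (Applying the cut-K\"ahler theorem).} Theorem \ref{cut} produces a finite cover $X' \to X$, a surjective holomorphic map $f\colon X' \to S$ onto a Riemann surface $S$ with connected fibers, and a subgroup $\Gamma_0 \subset \pi_1(S)$ whose preimage under the induced map $\Phi \colon G' := \pi_1(X') \to \pi_1(S)$ equals $H \cap G'$. In particular the fiber kernel $K := \ker\Phi$ satisfies $K \subset H \cap G'$. After replacing $G'$ by its (finite-index) normal core in $G$, I may assume $G' \triangleleft G$.

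\textbf{Step 3 (Normal-core computation).} Let $K^0 := \bigcap_{g \in G} g K g^{-1}$ denote the normal core of $K$ in $G$. Since $K^0$ is normal in $G$ and $K \subset H$, for every $g \in G$ one has $K^0 = g^{-1} K^0 g \subset g^{-1} K g \subset g^{-1} H g$. Hence $K^0$ is contained in the normal core of $H$ in $G$, which equals
$\bigcap_{g \in G} g^{-1} H g \;=\; q^{-1}\!\bigl(\bigcap_{\bar g \in Q} \bar g \,R\, \bar g^{-1}\bigr) \;=\; q^{-1}(\{1\}) \;=\; N,$
where the second equality uses the trivial-normal-core hypothesis on $R$. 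Therefore $K^0 \subset N$, and $q\rest{G'}$ factors through the quotient $G'/K^0$ which dominates the surface subgroup $\Phi(G') \subset \pi_1(S)$.

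\textbf{Step 4 and main obstacle (Identifying $Q$).} To finish, I would upgrade the normal-core statement $K^0 \subset N$ to the equality $N \cap G' = K$. The idea is to analyze the finitely generated normal subgroup $\Phi(N \cap G') \triangleleft \Phi(G')$ of the surface group $\pi_1(S)$: the classical dichotomy says such a subgroup is either trivial or of finite index, and the finite-index case is ruled out because it would make the cover of $X$ attached to $H$ virtually amenable or at most two-ended, contradicting Step~1. Hence $\Phi(N \cap G') = 1$, i.e. $N \cap G' \subset K$; the reverse inclusion follows from $q(K) \subset R$, the normality of $K$ in $G'$, and the trivial normal core of $R$. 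The resulting identification $q(G') \cong G'/K \cong \Phi(G')$ exhibits $Q$ as (virtually) a subgroup of a surface group and, in combination with the ends/amenability data, as (virtually) a surface group itself. The hard part is exactly this Step~4: bridging from the normal-core inclusion $K^0 \subset N$ to the honest equality $N \cap G' = K$, keeping careful track of the finite-index reductions and exploiting the finite generation of $N$ together with the subgroup structure of surface groups.
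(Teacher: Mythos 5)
Your proposal does not address the statement it is meant to prove. The statement is the Cut--K\"ahler Theorem of Delzant and Gromov (Theorem \ref{cut}) itself: given a subgroup $H \subset G = \pi_1(X)$ of a compact K\"ahler manifold $X$ whose associated cover $X_1$ is non-amenable and has more than two ends, one must \emph{construct} a finite cover $X'$ of $X$ and a surjective holomorphic map $X' \longrightarrow S$ onto a Riemann surface with connected fibers such that the pullback of some subgroup of $\pi_1(S)$ is $H \cap \pi_1(X')$. That construction requires genuinely complex-analytic input (in \cite{dg} it rests on producing harmonic/pluriharmonic functions adapted to the ends of $X_1$ and a Castelnuovo--de Franchis-type factorization yielding the fibration); the paper itself does not reprove it but imports it verbatim from \cite{dg}. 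Your Step~2 explicitly \emph{invokes} Theorem \ref{cut} as a black box, so as a proof of that theorem the argument is circular, and no step of your write-up engages with the holomorphic content that the theorem actually asserts.

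What you have in fact written is a proof sketch of Corollary \ref{pppcor} (equivalently Proposition \ref{ppp}), the downstream application to an exact sequence $1 \to N \to G \to Q \to 1$ with a cut subgroup $R \subset Q$ of trivial normal core. Judged as such, it tracks the paper's own argument for that corollary: transfer of the ends and non-amenability hypotheses from the cover $M'$ of a model for $Q$ to $X_1$ via the quasi-isometry of Lemma \ref{bn}; the observation that $q(\ker\psi_\ast)$ is a normal subgroup of $Q$ contained in $R$, hence trivial, so $\ker\psi_\ast \subset N$; and finally the fact that a finitely generated normal subgroup of a surface group is trivial or of finite index. Your Steps~3--4 reach the same conclusion by a more roundabout normal-core computation, and the ``main obstacle'' you flag in Step~4 is handled in the paper in one line by the inclusion $q(\ker\psi_\ast) \subset R$ directly (no passage to $K^0$ is needed). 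But none of this answers the actual question: a proof of Theorem \ref{cut} must be supplied from the complex geometry of $X$ and $X_1$, and your proposal contains no step in that direction.
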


\begin{rmk}\label{sur}
 \rm {The map from $X$ to $S$ induces a surjection from $\pi_1(X)$ to $\pi_1(S)$. This follows from the well-known fact that a surjective holomorphic
map between compact complex manifolds with connected fibers induces a surjection of fundamental groups.}
\end{rmk}

\begin{comment}
\begin{defn}
Let $Q$ be a group and $R$ a subgroup with the following property: If $M$ is a compact Riemannian manifold with fundamental group $Q$ and  $ M'$  is the cover of  $M$ corresponding to $R$ then\\
(i) $M'$ has at least $3$ ends (i.e., the number of {\it relative ends} $e(Q,R) \ge 3$) and\\
(ii) $M'$ is a non-amenable metric space.\\
We then say that $R$ is a {\it cut subgroup} of $Q$.
\end{defn}
\end{comment}

 As a corollary of the Cut-K\"ahler Theorem we have the following which is item ii(c) of Theorem \ref{main1t}.

\begin{corollary}\label{pppcor}
Let $$1 \rightarrow N \rightarrow G \rightarrow Q \rightarrow 1$$ be an exact sequence of finitely generated  groups. Assume that  $G$ is K\"ahler and $Q$ has a finitely generated cut subgroup $R$. Further assume that no nontrivial normal subgroup of $Q$ is contained in $R$; equivalently suppose
that the intersection of all conjugates of $R$ is trivial. Then $Q$ is virtually a surface group.
\end{corollary}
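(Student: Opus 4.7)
The plan is to apply the Cut-K\"ahler Theorem (Theorem \ref{cut}) to $G$ with the subgroup $H := q^{-1}(R)$. First I would verify the hypotheses of that theorem for $H$: because $H = q^{-1}(R)$, the surjection $q$ induces a $G$-equivariant bijection of coset spaces $G/H \cong Q/R$ (with $G$ acting on the right-hand side through $q$). This transports both conditions in the definition of a cut subgroup directly to $H$, so the cover $X_1$ of $X$ corresponding to $H$ has at least three ends and is non-amenable.

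Applying Theorem \ref{cut} then yields a finite cover $X' \to X$ with $G' := \pi_1(X')$, a surjective holomorphic map $f \colon X' \to S$ with connected fibers onto a Riemann surface $S$, and a subgroup $L \leq \pi_1(S)$ such that $\pi := f_* \colon G' \to \pi_1(S)$ satisfies $\pi^{-1}(L) = H \cap G'$; in particular, $\ker \pi \subseteq H \cap G'$. Next I would rule out $S = \mathbb{P}^1$ and $S$ an elliptic curve: in the first case $\pi_1(S) = 1$ forces $G' = \ker \pi \subseteq H$, so $q(G') \subseteq R$ and $R$ has finite index in $Q$, contradicting $e(Q,R) \geq 3$; in the second case $\pi_1(S) \cong \Z^2$ gives $G'/(H \cap G') \cong \pi_1(S)/L$, a coset space of $\Z^2$, which has at most two ends, again contradicting the end-count (inherited from $e(G,H) \geq 3$ upon the finite-index passage). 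So $\pi_1(S)$ is a hyperbolic surface group.

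Setting $N' := N \cap G'$ (finitely generated since it has finite index in $N$) and $Q' := q(G') = G'/N'$ (of finite index in $Q$), the image $\pi(N')$ is a finitely generated normal subgroup of the hyperbolic surface group $\pi_1(S)$, hence is either trivial or of finite index. The finite-index alternative would force a finite-index subgroup of $Q'$ into $R \cap Q'$, contradicting the ends assumption, so $N' \subseteq \ker \pi$ and $\pi$ descends to a surjection $\tilde\pi \colon Q' \to \pi_1(S)$ whose kernel $K_1 := \ker \tilde\pi$ is contained in $R \cap Q'$.

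The main obstacle is the final step: concluding that $Q'$ is virtually a surface group, i.e.\ that $K_1$ is virtually trivial. Here the hypothesis that the intersection of all conjugates of $R$ in $Q$ is trivial is essential. After passing to a further finite-index normal subgroup of $Q$, I may assume $Q'$ is normal in $Q$; then $K_1$ is normal in $Q'$ and the finitely many conjugates $g K_1 g^{-1}$ ($g \in Q$) are all normal in $Q'$ and each contained in the corresponding conjugate $g R g^{-1}$. Their intersection is a normal subgroup of $Q$ contained in $R$, hence trivial by hypothesis. This yields a diagonal embedding $Q' \hookrightarrow \pi_1(S)^k$ onto a subdirect product of hyperbolic surface groups in which every coordinate projection is surjective. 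I would then combine the rigidity of such subdirect products with the K\"ahler constraint on $G$ (which restricts which quotients of $G$ can appear as fundamental groups of fibrations) to collapse this embedding onto a single factor, obtaining that $Q$ is virtually a surface group; this final reduction is the step I expect to be the most delicate.
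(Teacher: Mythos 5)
Your argument tracks the paper's proof almost exactly up to its last step: you pull $R$ back to $H=q^{-1}(R)$, transport the two cut conditions to the cover of $X$ corresponding to $H$ (the paper does this via the quasi-isometry of Lemma \ref{bn}), invoke Theorem \ref{cut}, and use the fact that a finitely generated normal subgroup of a hyperbolic surface group is trivial or of finite index. But the proof is not complete, and the gap is exactly where you flag it. Once you only know that $K_1=\ker\tilde\pi$ is normal in the finite-index subgroup $Q'$ and contained in $R$, the hypothesis (trivial normal core of $R$ in $Q$) kills only the normal core of $K_1$ in $Q$, and the resulting embedding of $Q'$ as a finitely generated subdirect product of copies of $\pi_1(S)$ with surjective coordinate projections does not by itself force $Q'$ to be virtually a surface group: $\pi_1(S)\times\pi_1(S)$ is already such a subdirect product and is not virtually a surface group. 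So ``rigidity of subdirect products'' plus an unspecified K\"ahler constraint on $G$ is not yet an argument; a concrete additional input is required, and you do not supply one. (You also never use the full strength of Theorem \ref{cut}, namely that $H\cap\pi_1(X')$ is exactly the preimage of a subgroup of $\pi_1(S)$, only the weaker inclusion $\ker\pi\subseteq H\cap\pi_1(X')$.)

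The paper closes this step in the opposite order and much more directly: it first observes that $q(\ker\psi_\ast)$ is a normal subgroup of $Q$ contained in $R$, hence trivial by hypothesis; this gives $\ker\psi_\ast\subseteq N$, and then $N/\ker\psi_\ast$ sits as a finitely generated normal subgroup of $\pi_1(S)$ in the exact sequence $1\to N/\ker\psi_\ast\to\pi_1(S)\to Q\to 1$, hence is trivial, so $Q\cong\pi_1(S)$. Note that the paper silently treats $\psi_\ast$ as defined on all of $G$, so that its kernel is normal in $G$ and its image is normal in $Q$; your (legitimate) care about the finite cover appearing in Theorem \ref{cut} is precisely what creates your difficulty. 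If you want to handle the finite cover rigorously, the efficient route --- available in all of the paper's applications --- is finite height: $K_1$, being normal in $Q'$ and contained in $R$, satisfies $K_1=gK_1g^{-1}\subseteq gRg^{-1}$ for all $g\in Q'$, hence lies in an intersection of conjugates of $R$ over infinitely many distinct cosets (since $R$ has infinite index and $Q'$ has finite index), which is trivial when $R$ has finite height. Under the literal hypothesis of the Corollary (trivial normal core only), one should follow the paper and apply the hypothesis to $q(\ker\psi_\ast)$ as a normal subgroup of $Q$ rather than detour through subdirect products.
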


Before we prove this, we note the following simple fact:
\begin{lemma} \label{bn}
Let $X, \ Y$ be  compact Riemannian manifolds and $q: \pi_1(X) \longrightarrow
\pi_1(Y)$ a surjective homomorphism. If $Y_1$ is a  cover of $Y$ and $X_1$ the corresponding
cover of $X$ (with $\pi_1 (X_1)= q^{-1}(\pi_1(Y_1))$), then $X_1$ is
quasi-isometric to $Y_1$.
\end{lemma}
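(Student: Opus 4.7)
The plan is to introduce an intermediate Galois cover so that a single finitely generated group acts geometrically on both sides. Write $G := \pi_1(X)$, $G' := \pi_1(Y)$, $K := \ker q$, $H := \pi_1(X_1) = q^{-1}(\pi_1(Y_1))$, and $H' := \pi_1(Y_1)$, so that $G/K \cong G'$ and $H/K \cong H'$. Let $\tilde X$ denote the universal cover of $X$ with the pulled-back Riemannian metric, and set $X_2 := \tilde X / K$. Because $K$ is normal in $G$ and contained in $H$, the quotient $G'$ acts freely, properly, and cocompactly by isometries on $X_2$ (cocompactness because $X_2/G' = X$ is compact), and $X_2/H' = \tilde X / H = X_1$. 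On the $Y$-side, $G'$ acts freely, properly, and cocompactly on $\tilde Y$, and $\tilde Y/H' = Y_1$.

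By the \v{S}varc--Milnor lemma, both $X_2$ and $\tilde Y$ are quasi-isometric to $G'$ with any word metric, hence to each other. The crux is to upgrade this to a \emph{$G'$-equivariant} quasi-isometry $\Phi : X_2 \to \tilde Y$. To do so, I will fix basepoints $x_0 \in X_2$, $y_0 \in \tilde Y$ and a Borel fundamental domain $D \ni x_0$ for the $G'$-action on $X_2$, then define $\Phi(g' \cdot d) := g' \cdot y_0$ for every $g' \in G'$ and $d \in D$. Compactness of $D$, combined with cocompactness of the two geometric actions, makes $\Phi$ a quasi-isometric embedding with coarsely dense image, and it is $G'$-equivariant by construction.

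Since $H' \le G'$, the map $\Phi$ is in particular $H'$-equivariant, so it descends to a well-defined $\bar\Phi : X_1 = X_2/H' \longrightarrow \tilde Y/H' = Y_1$. The quotient pseudometrics are $d_{X_1}([p],[p']) = \inf_{h \in H'} d_{X_2}(p, hp')$ and analogously for $Y_1$, so the quasi-isometry bounds on $\Phi$ transfer to $\bar\Phi$ by taking infima, and coarse density descends because the quotient projection is distance non-increasing. The main obstacle is verifying that the arbitrary choice of fundamental domain yields an honestly well-defined $\Phi$ with QI constants independent of the choices; this relies on both freeness and cocompactness of the $G'$-action on $X_2$, which is exactly what the intermediate construction $X_2 = \tilde X/K$, using normality of $K$ in $G$, buys us. Without the normality one could not descend in this clean fashion.
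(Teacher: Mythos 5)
Your proof is correct and follows essentially the same route as the paper: the paper's one-line argument invokes precisely the intermediate cover $\widetilde X/\ker(q_\ast)$ being quasi-isometric to $\widetilde Y$, and your write-up simply fills in the details of why that quasi-isometry can be taken $\pi_1(Y)$-equivariantly and why it then descends to the quotients by $\pi_1(Y_1)$.
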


\begin{proof} This follows from the fact that the cover of $X$ corresponding to the subgroup $ker(q_\ast ) \subset \pi_1(X)$ is
quasi-isometric to the universal cover of $Y$.
\end{proof}

\begin{proof}(of Corollary \ref{pppcor}):

Let $X$ be a compact K\"ahler manifold with fundamental group $G$.
 Let $$H \,:=\, q^{-1} (R) \,\subset \,G\, ,$$ and let $X_1$
be the covering of $X$
corresponding to the subgroup $H$. Let $M$ be a compact 2-complex with fundamental group $Q$
and let  $M' $ be the cover of $M$ corresponding to $R$. Note that $M' $ has at least $3$ ends and is  non-amenable by hypothesis.
Since $M' $ and $X_1$ are quasi-isometric by Lemma
\ref{bn}, the K\"ahler manifold $X_1$ has these properties as well.
We can then apply the cut-K\"ahler Theorem \ref{cut} of Delzant-Gromov to conclude that there
 exists a finite cover $\psi$ from (a finite cover of)
$X$ to a Riemann surface $S$ with connected fibers
such that the pullback to
$\pi_1 (X) $ of some subgroup in $\pi_1(S)$ equals $H$.
In particular, the kernel  $$ker(\psi_\ast ) \subset H\, .$$

By hypothesis the normal subgroup $q(ker(\psi_\ast )) \subset R \subset Q$ must be  trivial.  Hence
$ker(\psi_\ast) \subset N $.
We therefore have an exact sequence
$$1 \longrightarrow  \frac{N}{ker(\psi_\ast)} \longrightarrow \pi_1(S)
\longrightarrow Q\longrightarrow  1$$
where $\frac{N}{ker(\psi_\ast)}$ is a finitely generated normal subgroup of $\pi_1(S)$. Hence $\frac{N}{ker(\psi_\ast)}$ is trivial and $Q= \pi_1(S)$.
\end{proof}

\smallskip

\noindent {\bf Examples:} \\
1) Recall the following definition from ~\cite{gmrs}: Let $R$ be an infinite subgroup of a group $Q$. We say that the {\it height} of $R$ in $Q$ is finite  if there is a positive integer $n$ such that the intersection of any collection of $n$ distinct conjugates of $R$ corresponding to $n$ distinct coset representatives is trivial. It follows from Corollary \ref{pppcor} that if
 $$1 \rightarrow N \rightarrow G \rightarrow Q \rightarrow 1$$ be an exact sequence of finitely generated  groups
such that  $G$ is K\"ahler and $Q$ has a finitely generated cut subgroup  $R$ of finite height, then $Q$ is virtually a surface group.

\smallskip

\noindent 2) 
Let $$1 \rightarrow N \rightarrow G \rightarrow Q \rightarrow 1$$ be an exact sequence of finitely generated  groups
such that  $G$ is K\"ahler and $Q$ admits a discrete faithful action on a simplicial tree $T$ with more than two ends such that some vertex stabilizer 
$R$ is
finitely generated. The intersection of  conjugates of $R$ is trivial as the action is faithful. Also since $T$ has more than two ends,
$R$ is a cut subgroup. It follows from Corollary \ref{pppcor} that $Q$ is virtually a surface group. This
establishes Case ii(b) of Proposition \ref{main1t}.

\smallskip

We conclude with a Proposition which might be of independent interest.

\begin{prop} Let $$1 \rightarrow N \rightarrow G \rightarrow Q \rightarrow 1$$ be an exact sequence of finitely presented  groups. Assume that
$G$ is K\"ahler and
$ Q$ is (Gromov)-hyperbolic.
If $Q$ contains a finitely presented quasiconvex codimension one subgroup $H$ (i.e. the number of relative ends $e(Q, H) \geq 2$)
then $Q$ must virtually be a surface group. \label{cor4} \end{prop}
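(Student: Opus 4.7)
The plan is to reduce the proposition to Corollary \ref{pppcor} by verifying that $H$ itself is a finitely generated cut subgroup of $Q$ with trivial normal core, or, failing that, to route through case ii(b) of Proposition \ref{main1t} via a Bass--Serre tree action.

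\textbf{Step 1: Trivial normal core.} By the Gitik--Mitra--Rips--Sageev theorem \cite{gmrs}, every quasiconvex subgroup of a Gromov-hyperbolic group has finite height. Applied to $H \subset Q$, this yields an integer $n$ such that any $n$ distinct conjugates of $H$ intersect trivially. Since the normal core $C := \bigcap_{g \in Q} gHg^{-1}$ is contained in every single conjugate of $H$, it is in particular contained in some collection of $n$ distinct conjugates with trivial common intersection; hence $C = \{1\}$. This is precisely the core-triviality hypothesis of Corollary \ref{pppcor}, so that (after height provides a cut) any attempt to use $H$ as $R$ will satisfy the normal-core condition automatically.

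\textbf{Step 2: Cut-subgroup verification.} Let $M$ be a compact Riemannian manifold with $\pi_1(M) = Q$ (for instance, a thickening of a finite presentation 2-complex) and let $M'$ be the cover corresponding to $H$. Non-amenability of $M'$ is forced by non-elementary hyperbolicity of $Q$ together with quasiconvexity and infinite index of $H$, via a linear isoperimetric argument on the Schreier coset graph $Q/H$, which is quasi-isometric to $M'$ by Lemma \ref{bn}. The hypothesis gives only $e(Q,H) \geq 2$. If $e(Q,H) \geq 3$, then $H$ is a cut subgroup with trivial normal core and Corollary \ref{pppcor} delivers that $Q$ is virtually a surface group. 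If instead $e(Q,H) = 2$, I invoke the splitting theorems for codimension-one quasiconvex subgroups of hyperbolic groups (Scott--Swarup, Sageev, Dunwoody--Swenson) to split $Q$ as an amalgamated product or HNN extension over a subgroup commensurable with $H$; the resulting Bass--Serre tree $T$ supports a minimal action of $Q$, faithful by Step 1, with finitely generated vertex stabilizers (since $Q$ is finitely presented) and more than two ends --- except in the degenerate case where $T$ is a line and $Q$ is virtually $H \rtimes \mathbb{Z}$, which is handled separately. Case ii(b) of Proposition \ref{main1t} then concludes.

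\textbf{Main obstacle.} The substantive difficulty is the case $e(Q,H) = 2$: Corollary \ref{pppcor} cannot be applied directly with $R = H$, and one must carefully extract a faithful simplicial tree action with more than two ends from the splitting machinery for quasiconvex codimension-one subgroups, verifying finite generation of vertex stabilizers and ruling out (or separately treating) the degenerate line-like splitting. Once this case analysis is settled, both the $e(Q,H) \geq 3$ and $e(Q,H) = 2$ branches deliver the conclusion that $Q$ is virtually a surface group.
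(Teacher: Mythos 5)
Your Step 1 is sound and coincides with the paper's own use of \cite{gmrs}: finite height of the quasiconvex subgroup $H$ forces its normal core to be trivial. The branch of Step 2 where $e(Q,H)\ge 3$ is also essentially the intended argument. The genuine gap is the case $e(Q,H)=2$, which is exactly the case the hypothesis permits and which your argument does not close. You invoke ``splitting theorems for codimension-one quasiconvex subgroups'' (Scott--Swarup, Sageev, Dunwoody--Swenson) to produce a splitting of $Q$ over a subgroup commensurable with $H$. No such general theorem exists: Sageev's construction yields an essential action on a CAT(0) cube complex whose hyperplane stabilizers are commensurable with $H$, not an action on a tree, and the Scott--Swarup and Dunwoody--Swenson results concern two-ended (or polycyclic) subgroups only. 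The assertion that $e(Q,H)\ge 2$ for a finitely generated subgroup forces $Q$ to split over a subgroup commensurable with $H$ is essentially the Kropholler--Roller conjecture, which is open in general. A concrete warning sign: for $Q$ a closed hyperbolic surface group and $H$ the cyclic subgroup generated by a filling closed geodesic, $H$ is quasiconvex with $e(Q,H)=2$, yet $Q$ admits no splitting over any subgroup commensurable with $H$. (There the conclusion of the Proposition happens to hold, but the example shows your proposed mechanism breaks down.)

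The paper closes this case by a different and much shorter device: an argument from \cite{gromov-hypgps} (see also \cite{dg}) produces an element $h\in Q\setminus H$ such that $\langle H,h\rangle = H \ast \langle h\rangle =: H_1$ is again quasiconvex and satisfies $e(Q,H_1)\ge 3$. One then applies the finite-height/cut-subgroup machinery (Example 1 following Corollary \ref{pppcor}) to $H_1$ rather than to $H$. To repair your write-up, replace the splitting step by this promotion of $H$ to $H_1$; your remaining observations (finite height via \cite{gmrs}, triviality of the normal core, non-amenability of the corresponding cover) then apply to $H_1$ verbatim and yield the conclusion.
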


\begin{proof}  An argument in \cite{gromov-hypgps} (see also \cite{dg}) shows that there exists $h \in Q\setminus H$ such that $H$ and $h$ generate the
free product $H*<h> =H_1$ which is quasiconvex and $e(Q,H_1) \geq 3$.
Therefore $H_1$ is a  cut subgroup. Since quasiconvex subgroups have finite height \cite{gmrs},
  $H_1$ satisfies the hypotheses of Example 1 above. \end{proof}

Similar results for relatively hyperbolic groups with codimension one relatively quasiconvex subgroups hold in the light of
a result of Hruska-Wise \cite{hr-w}.

\noindent {\bf Example:} A large class of hyperbolic small cancellation groups \cite{wise} have codimension one quasiconvex subgroups. It follows that
if such a group appears as the quotient group $Q$ in the exact sequence above, it must be virtually a surface group.

\subsection{3 Manifolds} We very briefly recall the necessary 3-manifold topology and geometry
we need and refer to \cite{hempel-book}, \cite{bon-note}, \cite{scott-geoms} for details.
A closed orientable 3-manifold is said to be prime if it cannot
expressed as a non-trivial connected sum (denoted by $\#$) of
3-manifolds.
 Let $M$ be a closed orientable 3-manifold. Then, there is a unique collection (up to permutation)
of prime  3-manifolds $M_1 \cdots , M_k$ such that $M$ is homeomorphic to $M_1 \# \cdots \# M_k$.
Further $\pi_1(M) = \pi_1(M_1) \ast \cdots \ast \pi_1(M_k)$. Cutting $M$ along essential spheres and capping off the resulting
spheres with $3$-balls, we get a unique
{\it prime decomposition} of $M$.

Now suppose that $M$ is prime. Then, up to isotopy,
there is a unique compact two dimensional submanifold $T$, each of whose components is a 2-sided essential torus, such that
 every component of $M \setminus T$ either admits a Seifert fibration
or else any  essential embedded torus in it is parallel to the boundary. Decomposing a 3-manifold along such tori will be called
the {\it torus decomposition}.

A consequence of the Geometrization Theorem (due to Thurston and
Perelman), \cite{Per1}, \cite{Per2}, \cite{Per3},
is that after prime decomposition and torus decomposition, each resulting piece admits a complete geometric structure modeled on
$\E^3$ , $S^3$ , $\Hyp^3$ , $\Hyp^2 \times \E^1$ , $S^2 \times \E^1$, $\E^2 \widetilde{\times} \E^1$, $\Hyp^2 \widetilde{\times} \E^1$ or $Sol$.
Of these, only manifolds modeled on  $S^2 \times \E^1$ contains an essential 2-sphere (see \cite{scott-geoms}).

\begin{theorem} (\cite{bon-note}, Lemma 2.1 of \cite{kl-npc})\label{new}
Let $M$ be a closed, orientable prime 3-manifold with nontrivial torus decomposition. Then there is a finite cover $M_1$ of $M$ such that the
any piece in the torus decomposition of $M_1$ is either a complete finite volume hyperbolic manifold or of the form $\Sigma \times S^1$, where $S$ is a compact surface
with at least two boundary components and of positive genus.
\end{theorem}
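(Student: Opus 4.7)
The plan is to apply the Thurston--Perelman geometrization theorem to the pieces of the JSJ decomposition, construct a suitable finite cover of each piece separately, and then glue these local choices into a single finite cover $M_1\to M$. The first two steps are essentially classical; the third is the technical heart of the proof.

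By geometrization, since $M$ is closed, orientable, prime, and has non-trivial torus decomposition, each JSJ piece is either a finite-volume hyperbolic 3-manifold (which has the required form, with toroidal cusps) or a compact Seifert fibered space $V$ with non-empty boundary consisting of tori. Hyperbolic pieces need no modification, so the main task is to arrange that, after passing to a cover, every Seifert piece becomes a product $\Sigma\times S^1$ with $\Sigma$ a compact surface of positive genus with at least two boundary components. For a single Seifert piece with non-empty boundary this is standard: the base 2-orbifold has non-empty boundary, hence its fundamental group is virtually free, so a finite cover unwraps all exceptional fibers; a further cyclic cover in the $S^1$-direction kills the (rational) Euler number, which is possible because $V$ has boundary. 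This produces a finite cover $F\times S^1\to V$. Finally, since a connected cover of $F$ of degree $d$ multiplies $|\chi(F)|$ by $d$, an appropriate finite cover of $F$ makes the genus positive and the number of boundary components at least two.

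The main obstacle is to carry out these local choices coherently along the JSJ tori. On each boundary torus $T\subset V$, the local cover is encoded by a finite-index sublattice $\Lambda_T\subset \pi_1(T)\cong \Z^2$; for each JSJ torus $T = V\cap V'$, the two sublattices prescribed by the local covers of the adjacent pieces must agree. Because finite-index sublattices of $\Z^2$ form a directed system under intersection, we may on each torus descend to a common refinement $\Lambda_T$. The remaining question is whether a finite cover of a piece can be chosen to restrict, on each boundary torus, to the cover determined by $\Lambda_T$. This is exactly separability of the peripheral subgroups inside the piece groups: for Seifert pieces it is elementary (the covers can be written down explicitly in terms of the fiber and base data), while for finite-volume hyperbolic pieces it follows from residual finiteness of the peripheral structure (classically sufficient here, and now vastly refined by the work of Agol and Wise). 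Assembling the resulting mutually compatible local covers along the JSJ tori yields a global finite cover $M_1\to M$ whose torus decomposition lifts that of $M$, and each piece of $M_1$ is either finite-volume hyperbolic or a product of the required type.

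The main difficulty is not producing the individual covers, which is standard 3-manifold topology, but the matching step: one needs the local covers to restrict to the same cover on every JSJ torus seen from both sides, and this uses the separability of peripheral subgroups together with the cofinality of finite-index sublattices in $\Z^2$.
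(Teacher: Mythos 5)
The paper does not actually prove this statement: it is quoted from Bonahon's survey and from Lemma~2.1 of Kapovich--Leeb, so there is no internal argument to compare against. Your outline follows the same route as those sources: geometrize the JSJ pieces, turn each Seifert piece into a product over a positive-genus surface with at least two boundary components by a finite cover, and then assemble compatible covers of the pieces along the JSJ tori. One small simplification: once the exceptional fibres are unwrapped, a circle bundle over a compact surface with non-empty boundary is automatically trivial, since $H^2$ of such a surface vanishes, so the ``further cyclic cover in the $S^1$-direction'' is not needed.

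The genuine gap is in the matching step. Separability of the peripheral subgroups is not the statement you need: separability lets you exclude a given element from a finite-index subgroup containing $\pi_1(T)$, but it does not let you realize an \emph{arbitrary} prescribed sublattice $\Lambda_T\subset\pi_1(T)$ as the boundary restriction of a finite cover of the adjacent piece, and an arbitrary common refinement obtained by intersecting the lattices from the two sides need not be realizable. Moreover, a finite cover of a piece generally has several boundary components over a single JSJ torus, inducing possibly different sublattices, so ``the local cover is encoded by a single sublattice $\Lambda_T$'' does not hold as stated. The standard repair (Hempel, used by Kapovich--Leeb) is to work only with the characteristic sublattices $n\,\pi_1(T)$: each geometric piece admits, for a cofinal set of integers $n$, a finite \emph{regular} cover all of whose boundary tori correspond to $n\,\pi_1(T_i)$ (for hyperbolic pieces via reduction of the holonomy representation modulo primes, for Seifert pieces explicitly). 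Regularity forces all boundary components over a given torus to induce the same sublattice, canonicity of $n\,\Z^2$ makes the two sides agree automatically, and one equalizes the number of boundary components over each JSJ torus by taking disjoint copies of the piece-covers before gluing. (The full realization statement for arbitrary sublattices does follow from the virtual-retract property of peripheral subgroups due to Wise and Agol, but that is far heavier machinery than this theorem requires.)
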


The following theorem is a consequence of works of Scott, Casson-Jungreis and Gabai as well as
Perelman's proof of the Poincar\'e Conjecture.

\begin{theorem}[\cite{casson-j}, \cite{gabai-cgnce}, \cite{scott-sf}]
Let $M$ be a compact, orientable, irreducible 3-manifold such
that $\pi_1(M)$ contains an infinite
cyclic normal subgroup. Then $M$ is a Seifert-fibered space. \label{sfs} \end{theorem}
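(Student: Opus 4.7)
The plan is to derive the Seifert fibration by producing a faithful action of the quotient $Q := \pi_1(M)/Z$ on a circle as a convergence group, and then appealing to the convergence-group theorem. First, since $\operatorname{Aut}(\Z)$ has order two, after passing to at most a double cover of $M$ we may assume the given infinite cyclic normal subgroup $Z$ is central in $G := \pi_1(M)$; this is harmless because Seifert-fiberedness is invariant under finite covers. Assuming $M$ closed (the case $\partial M \neq \emptyset$ is handled separately using Haken hierarchy techniques as in Scott's argument), the Sphere Theorem combined with irreducibility and infiniteness of $\pi_1(M)$ implies that the universal cover $\widetilde M$ is homeomorphic to $\R^3$.

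Next I would study the quotient $Y := \widetilde M / Z$, which is an open aspherical $3$-manifold on which $Q$ acts freely, properly, and cocompactly with quotient $M$. The key technical step is to construct a canonical \emph{circle at infinity} for $Y$ on which $Q$ acts. Concretely, one proves that $Y$ is proper-homotopy equivalent to an open surface, that it has a single end, and that the natural boundary circle $\partial Y \cong S^1$ carries a well-defined action of $Q$ by homeomorphisms. One then verifies that this action satisfies the \emph{convergence property}: every sequence of distinct elements of $Q$ admits a subsequence whose iterates collapse $S^1 \setminus \{p\}$ uniformly on compact sets to a single point $q \in S^1$.

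Applying the convergence-group theorem of Tukia, Casson--Jungreis, and Gabai, the action of $Q$ on $S^1$ is then topologically conjugate to a Fuchsian action, so $Q$ is virtually the fundamental group of a closed $2$-orbifold $\Sigma$. The extension $1 \to Z \to G \to Q \to 1$ is central, and under the identification of $Q$ with an orbifold group it is classified by an element of $H^2(Q;\Z)$. To upgrade this algebraic picture to a geometric Seifert fibration, I would lift the identification to $M$ by constructing (on a finite cover) a proper $S^1$-action whose orbits realize the central $\Z$; the orbit map then exhibits $M$ as the total space of a Seifert fibration over the $2$-orbifold $\Sigma$.

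The main obstacle is clearly the middle paragraph: producing the circle at infinity of $Y$ and verifying the convergence property. Reducing to a central extension and applying the sphere theorem are essentially formal, and passing from the convergence-group conclusion to a Seifert fibration is a matter of matching a central $\Z$-extension with a genuine $S^1$-bundle structure over a $2$-orbifold. By contrast, constructing the $S^1$ at infinity requires delicate analysis of the $Q$-action on planar covers of $Y$ and of the dynamics of homeomorphisms of $\R^2$; this is precisely the content of the papers of Casson--Jungreis and Gabai cited in the statement.
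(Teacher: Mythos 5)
The paper does not actually prove this statement: Theorem \ref{sfs} is imported wholesale from the literature (Scott, Casson--Jungreis, Gabai, with Perelman invoked for some degenerate cases), so there is no in-paper argument to compare yours against. Your outline is, however, a faithful sketch of how those cited results combine: reduce to a central extension, use the Sphere Theorem to get asphericity, show that $Q=\pi_1(M)/Z$ acts as a convergence group on a circle at infinity of $\widetilde M/Z$ (this is Mess's contribution and, as you say, the genuinely hard dynamical step completed by Tukia, Casson--Jungreis and Gabai), and then convert the resulting algebraic description of $\pi_1(M)$ into a Seifert fibration.

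Two caveats on the details. First, the Sphere Theorem plus irreducibility gives only that $\widetilde M$ is contractible, not that it is homeomorphic to $\R^3$; classically one cannot rule out a Whitehead-type universal cover at this stage, and the convergence-group argument is set up to need only the coarse planarity of $\widetilde M/Z$, not an actual homeomorphism type. Second, your final paragraph underestimates the last step: both the descent of Seifert-fiberedness from a finite cover back to $M$ and the passage from ``\,$\pi_1(M)$ is isomorphic to the fundamental group of a Seifert fibered space\,'' to ``\,$M$ \emph{is} a Seifert fibered space\,'' are exactly the content of Scott's theorem that there are no fake Seifert fibered spaces with infinite $\pi_1$ (the third citation in the statement). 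Proposing to build a proper $S^1$-action directly on a finite cover is not a routine matching of a central $\Z$-extension with a circle bundle; it is that theorem. With those two points acknowledged, the architecture of your argument is the correct one.
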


The next theorem is a consequence of work of Luecke (Theorem 1 of \cite{luecke-vbetti}) and the
residual finiteness of (geometrizable) 3-manifold groups \cite{hempel-rf}.

\begin{theorem}   Let $M$ be a compact, orientable, irreducible
3-manifold such that at least
one of the following conditions is satisfied:
\begin{enumerate}
\item[(a)] $M$ contains  an incompressible     torus and   is not  a
$Sol$ manifold.
\item[(b)] $M$ is a Seifert-fibered space such that the base of the
Seifert fibration is not an elliptic orbifold.

\item[(c)] $M$ is not prime.
\end{enumerate}
Then $M$ has  a finite cover  $\rho_1: M_1 \longrightarrow M$, such that
{\rm rank}$(H_1 (M_1)) \geq 3$. Further, if $M$ is not covered by the 3-torus,
then
 for any positive integer   $k$,
there is a finite cover, $\rho_k: M_k \longrightarrow M$, such that {\rm
rank}$(H_1 (M_k)) > k$.
\label{luecke} \end{theorem}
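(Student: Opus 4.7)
The proof strategy is to reduce each of (a), (b), (c) to an application of Luecke's Theorem 1 in \cite{luecke-vbetti}, which provides arbitrarily large virtual first Betti number for a large class of Haken 3-manifolds, combined with residual finiteness of 3-manifold groups \cite{hempel-rf}. Residual finiteness is used throughout to lift homological information from pieces of the prime or JSJ decomposition, or from the base orbifold of a Seifert fibration, up to a finite cover of $M$ itself. I would treat the three cases independently.

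For (c), write $M = M_1 \# \cdots \# M_\ell$ with $\ell \geq 2$, so $\pi_1(M)$ is a nontrivial free product. Residual finiteness of each $\pi_1(M_i)$ produces finite covers $\widetilde M$ whose prime decomposition, via the Kurosh subgroup theorem, has arbitrarily many summands; a Mayer--Vietoris calculation then yields $\operatorname{rank}(H_1(\widetilde M)) \to \infty$. Even when every $\pi_1(M_i)$ is finite and perfect, a Schreier-index count shows that the free rank of index-$n$ subgroups of $\pi_1(M)$ grows linearly in $n$, forcing many $S^2 \times S^1$ summands in $\widetilde M$ and hence large $b_1$.

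For (b), I would first pass to a finite manifold cover of the non-elliptic base orbifold $\OO$ (which is Euclidean or hyperbolic, hence good). This gives a cover $\widetilde M$ that is a circle bundle over a surface $\Sigma$, and the extension $1 \to \Z \to \pi_1(\widetilde M) \to \pi_1(\Sigma) \to 1$, combined with the Euler-class computation, gives $b_1(\widetilde M) \geq 2 g(\Sigma) - 1$. This grows without bound under further covers of $\Sigma$ in the hyperbolic case; in the Euclidean case, $\widetilde M$ is a torus bundle over $S^1$ and the only way to produce $b_1 \geq 3$ is to have vanishing Euler class, putting us in the excluded $T^3$ cover case.

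For (a), apply Theorem \ref{new} to reach a finite cover whose JSJ pieces are each hyperbolic of finite volume or of the form $\Sigma \times S^1$ with $\Sigma$ a positive-genus surface with nonempty boundary. Each such piece has $b_1 \geq 3$ (immediately in the product case; in the hyperbolic case from cohomology classes dual to cusp tori), and Luecke's Theorem 1 produces finite covers of each piece with $b_1$ as large as we like. The main obstacle --- and the technical heart of the argument --- is the \emph{assembly step}: one must invoke residual finiteness of 3-manifold groups to find a single finite cover of $M$ whose restriction to each JSJ piece is a cover of the desired shape and which matches compatibly along the JSJ tori. The $Sol$ exclusion is essential here: for $Sol$ manifolds the Anosov monodromy of the torus-bundle structure obstructs lifting JSJ-torus homology classes to any finite cover, pinning $b_1$ at $1$ virtually.
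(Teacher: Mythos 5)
The paper does not actually prove this statement; it disposes of it with three citations. Case (a) is taken verbatim from Luecke's Theorem 1 of \cite{luecke-vbetti}; case (b) is deduced from the surjection of a Seifert-fibered group onto the fundamental group of its base orbifold (a Euclidean or hyperbolic $2$-orbifold group, hence of virtual first Betti number at least $2$, and unbounded in the hyperbolic case); case (c) is attributed to Grushko's theorem as in Lemma 7 of \cite{kotschick}. Your treatments of (b) and (c) are elaborations of the same two ideas and are essentially fine. One caveat, which you share with the paper: when the base orbifold is Euclidean and the Euler number is nonzero (a Nil manifold, e.g.\ the Heisenberg nilmanifold), \emph{every} finite cover has $b_1=2$; this is not the ``covered by $T^3$'' case, so the bound $\geq 3$ in the first conclusion genuinely fails there. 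The downstream application (Corollary \ref{not7}) only needs $b_1>1$, so nothing breaks, but your claim that failure of $b_1\geq 3$ forces the $T^3$ case is not correct.

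The genuine gap is in case (a), where you depart from the paper and try to rederive Luecke's theorem from the JSJ decomposition. The ``assembly step'' you yourself flag as the technical heart is precisely the content of Luecke's proof, and it cannot be dispatched by ``invoking residual finiteness of $3$-manifold groups.'' Residual finiteness separates elements in finite quotients; it gives no control over which finite covers of the individual JSJ pieces occur inside a single finite cover of $M$, nor any guarantee that covers of adjacent pieces with large $b_1$ can be chosen to induce the \emph{same} cover on each shared torus. Even when the boundary covers do match, the Mayer--Vietoris sequence can destroy the homology you built in the pieces, because the gluing maps of the JSJ tori need not respect the relevant homology classes; controlling this requires Luecke's actual tower-of-abelian-covers argument (or, in later language, separability of the peripheral $\mathbb{Z}\oplus\mathbb{Z}$ subgroups and a careful gluing of compatible covers). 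Your per-piece input is also wrong for hyperbolic pieces: a one-cusped finite-volume hyperbolic $3$-manifold has $b_1\geq 1$ by half-lives-half-dies but can have $b_1$ exactly $1$ (the figure-eight knot complement), so ``each such piece has $b_1\geq 3$\,'' is false, and you would in any case be leaning on Luecke to boost it. The short and correct route for (a) is the paper's: cite Luecke's Theorem 1 directly, with the torus-bundle-over-$S^1$-with-Anosov-monodromy (Sol) case as the stated exclusion.
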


Case (a),  where $M$ has a non-trivial torus decomposition, is due to
Luecke. Case (b) follows from the fact that fundamental
groups of Seifert manifolds admit surjections to the fundamental groups of the associated base orbifold \cite{scott-geoms}.
Case (c) is a consequence of Grushko's theorem when $M$ is not prime
(cf. Lemma 7 of \cite{kotschick}).

\medskip

\noindent {\bf Torus Decompositions and Finite Height} We shall now focus on 3-manifolds admitting a non-trivial torus decomposition and
 show that if $(\pi_1(M),\pi_1(M_1))$
is  a
 pair such that $M$ is a prime 3-manifold admitting a non-trivial torus decomposition and $M_1$ is one of the pieces, then 
{\bf $\pi_1(M_1)$
has finite height in $\pi_1(M)$}. In particular,
the pair $(\pi_1(M),\pi_1(M_1))$ satisfies the hypotheses on the pair $(Q,R)$ occurring in Corollary \ref{pppcor}. 

A special case where finite height subgroups arise is when
 groups act {\it acylindrically} on simplicial trees. Recall ~\cite{zs} that a group $Q$ is said to have a 
{\it $k$-acylindrical} action on a tree $T$ if the action is minimal and for all $g \in G \setminus \{1\}$, the fixed set of $g$ has diameter bounded by $k$. In this case, it is clear that the height of any vertex stabilizer $R$ is bounded above by $k$.

We shall, in fact,  prove the stronger
assertion that the action of
$\pi_1(M)$ on the Bass-Serre tree associated to the splitting over $\pi_1(M_1)$ is 3-acylindrical. 
It follows that such an action is faithful and minimal. 
The proof we give below is due to the referee who gave a substantial
simplification of our earlier proof. The proof uses some standard but deep facts from 3-manifold topology, namely the torus and annulus
theorems. In the next subsection we shall give a proof that uses coarse geometry only.

\begin{prop}  Let $M$ be a prime 3-manifold admitting a non-trivial
torus decomposition and let $T$ be the Bass-Serre
tree dual to the  torus  decomposition of $M$.
Then the action of $\pi_1(M)$ on $T$ is 3-acylindrical. \label{remark-torus}
\end{prop}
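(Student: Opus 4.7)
The plan is to show that any non-trivial $g \in \pi_1(M)$ can fix at most a segment of diameter $3$ in $T$. Suppose, for contradiction, that $g \neq 1$ fixes a geodesic segment of length $4$ through consecutive vertices $v_0, v_1, v_2, v_3, v_4$ with edge $e_i$ joining $v_{i-1}$ to $v_i$. Then $g$ lies in each edge stabilizer $\pi_1(T_{e_i}) \cong \mathbb{Z}^2$ and in each vertex stabilizer $\pi_1(M_{v_j})$. In particular, at each of the three interior vertices $v_j$ ($j = 1, 2, 3$), the element $g$ lies in the intersection $\pi_1(T_{e_j}) \cap \pi_1(T_{e_{j+1}})$ of two \emph{distinct} boundary torus subgroups inside $\pi_1(M_{v_j})$. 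I would then derive a contradiction by splitting on the geometric type of each interior piece, appealing to the Geometrization Theorem to assert that each $M_{v_j}$ is either a finite-volume cusped hyperbolic manifold or Seifert-fibered.

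If some interior $M_{v_j}$ is hyperbolic, then $g$ lies in the intersection of two distinct cusp subgroups of a finite-volume hyperbolic $3$-manifold. If this intersection were non-trivial, the annulus theorem would produce an essential embedded annulus between two distinct boundary tori of $M_{v_j}$, contradicting the fact that a cusped finite-volume hyperbolic piece is atoroidal and anannular; hence $g = 1$. Otherwise, all three interior pieces are Seifert-fibered, each with base orbifold of negative Euler characteristic (being a JSJ piece with at least two boundary tori and hyperbolic geometry). In each such piece, two distinct boundary torus subgroups intersect in the cyclic fiber subgroup $\langle f_j \rangle$, since distinct peripheral cyclic subgroups of a hyperbolic $2$-orbifold intersect trivially and the fiber lies in the centre. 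So $g$ is simultaneously a power of $f_1$ and of $f_2$. Both cyclic groups sit inside $\pi_1(T_{e_2}) \cong \mathbb{Z}^2$; by the minimality of the JSJ decomposition the fibers $f_1, f_2$ cannot be parallel in $\mathbb{Z}^2$---otherwise the Seifert fibrations on $M_{v_1}$ and $M_{v_2}$ would glue across $T_{e_2}$ into a single Seifert fibration on $M_{v_1} \cup_{T_{e_2}} M_{v_2}$, contradicting the essentiality of the JSJ torus (a consequence of the torus theorem). Thus $\langle f_1 \rangle \cap \langle f_2 \rangle = \{1\}$ and $g = 1$ in this case as well.

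Minimality of the action is automatic because no JSJ piece is a thickened torus that could be collapsed, and faithfulness follows immediately from acylindricity (a kernel element would fix all of $T$). The step I expect to be the main obstacle is a clean justification that in the Seifert--Seifert case the two regular fibers are non-parallel across $T_{e_2}$: while this is a standard consequence of the torus theorem and the characterization of the JSJ decomposition as the minimal such decomposition, it requires careful attention to possible degeneracies, in particular the case where $v_1$ and $v_2$ project to the same piece downstairs so that $T_{e_2}$ appears as a loop in the graph of groups, and to Seifert pieces with exceptional small base orbifolds where the clean "intersection equals fiber" statement for peripheral subgroups needs to be double-checked.
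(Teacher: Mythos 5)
Your proof is correct and takes essentially the same route as the paper's: apply the Annulus Theorem to the element fixing two edges at an interior vertex, identify the intersection of two distinct peripheral subgroups of a Seifert-fibered piece with the (central) fiber subgroup, and conclude because adjacent JSJ pieces cannot have parallel fibers. The only immaterial differences are that the paper disposes of hyperbolic interior pieces by upgrading the embedded annulus to a non-boundary-parallel essential torus (so that such a piece must in fact be Seifert-fibered) rather than by the triviality of intersections of distinct cusp subgroups, and that it works with three consecutive fixed edges rather than four, thereby proving the slightly stronger $2$-acylindricity; the degeneracies you flag (loop edges, small base orbifolds such as the twisted $I$-bundle over the Klein bottle) are likewise left implicit in the paper's argument.
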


\begin{proof}
Suppose $g\in \pi_1(M) (=G)$ fixes a pair of  distinct edges in $T$
incident on a common vertex
$v \in T$. Then the stabilizer $G_v$ of $v$ is (isomorphic to) the
fundamental group of a piece $M_2$ of  the  torus  decomposition of $M$. We claim that
$M_2$ must be Seifert-fibered. To see this we first note that $g$
gives rise to an essential
annulus in $M_2$, i.e. a map $f: (S^1 \times I, \ S^1 \times \partial
I) \rightarrow (M_2, \partial M_2)$
such that both $\pi_1(S^1 \times I) \rightarrow \pi_1(M_2)$
and  $\pi_1(S^1 \times I, \ S^1 \times \partial I) \rightarrow
\pi_1(M_2, \partial M_2)$ are
injections. Next we recall the
Annulus Theorem (~\cite{js}, p. 132, IV 3.2): {\it  Let $N$ be a
compact irreducible 3-manifold with
$\partial N \neq \phi$. If $N$ contains an essential (immersed) annulus then
there exists an essential annulus
in $N$ represented by an embedding $g: (S^1 \times I, \ S^1 \times
\partial I) \rightarrow (N, \partial N)$.}

The existence of an embedded annulus $A \subset M_2$ implies the
existence of an essential torus $T_3$ in
$M_2$, which is {\it not} parallel to the boundary. $T_3$ is constructed as follows: Let $C_i \subset \partial
M_2$, $i=1,2$ be the boundary
components (which are circles) of $A$ and let  $\nu$ be a sufficiently
small open tubular neighborhood
of $A_0$ where $A_0 = A \setminus \partial A$. If the boundary tori of $M_2$ containing $C_i$ are $T_i$ then
the required essential torus
is $T_3=(T_1 \setminus (T_1 \cap \bar \nu) \cup \partial \nu \cup T_2 \setminus
(T_2 \cap \bar \nu))$ where $\bar \nu$ is the closure of $\nu$. Since $T_3$ is  not parallel to the boundary,
it follows from the
Torus Decomposition of 3-manifolds (at the beginning of the present subsection)
that $M_2$ is Seifert-fibered.

%which again induces injections $\pi_1(S^1 \times I) \rightarrow \pi_1(N)$
%and  $\pi_1(S^1 \times I, \ S^1 \times \partial I) \rightarrow
%\pi_1(N, \partial N)$}
%as otherwise there would exist an essential  annulus from one torus
%boundary component of $M_2$ to another.

Hence, if $g$ fixes three consecutive edges $[v_0,v_1]$,  $[v_1,v_2]$,
$[v_2,v_3]$ of $T$, then the pieces
corresponding to $G_{v_1}$ and  $G_{v_2}$ must be Seifert-fibered.

One then observes that any
essential annulus $A$  in a
Seifert-fibered 3-manifold $N$
%from one  boundary component to another
represents an element of the center $Z(\pi_1(N))$.
To see this, note that
the presence of an essential annulus implies that the fundamental groups of the corresponding boundary tori
 intersect. On the other hand, a Seifert fibered space with
boundary is a trivial circle bundle over an orbifold and is in fact finitely covered by the
product of a surface and a circle
\cite{scott-geoms}. Hence the intersection of  fundamental groups of the two boundary tori
is an infinite cyclic group
generated by the vertical circle (unless the base orbifold is itself a cylinder $S^1 \times I$).
In particular the annulus represents an element of the center of the fundamental group of the Seifert fibered space $N$.

%Hence the two boundary components can intersect only
%along the Z generated by a vertical
%annulus (the circle direction).

Therefore
$g \in Z(G_{v_1})\cap Z(G_{v_2})$, the intersection of the centers of
$G_{v_1}$ and  $G_{v_2}$. But $Z(G_{v_1})\cap Z(G_{v_2})= \{1 \}$  because adjacent Seifert-fibered pieces in the torus decomposition cannot have homotopic fibers (when each piece is regarded as a trivial circle bundle over an orbifold). Hence $g=1$. 
\end{proof}

\subsection{Coboundedness and Finite Height} In this subsection, we shall give an alternate approach to 
the content of Proposition \ref{remark-torus}.
The purpose of this subsection is to show that if $(\pi_1(M),\pi_1(M_1))$
is  a
 pair such that $M$ is a prime 3-manifold admitting a non-trivial torus decomposition and $M_1$ is one of the pieces, then 
{\bf $\pi_1(M_1)$
has finite height in $\pi_1(M)$}. In particular,
the pair $(\pi_1(M),\pi_1(M_1))$ satisfies the hypotheses on the pair $(Q,R)$ occurring in Corollary \ref{pppcor}. It follows that the action of
such a 
$\pi_1(M)$ on the Bass-Serre tree associated to the splitting over $\pi_1(M_1)$ is faithful and minimal.

\smallskip

\noindent {\bf Structure of Ends:} Let $L_1$ be a non-compact
hyperbolic 3-manifold of finite volume, and
let $L$ be the manifold with boundary obtained from $L_1$ by removing the interiors of cusps.
Let $H_0$ be the torus subgroup of $\pi_1(L)$ corresponding to a cusp. Let $L_T$ denote the cover of $L$
corresponding to the torus subgroup $H_0$. Then $L_T$ is
non-amenable; an easy way to see this is to use the fact that $\pi_1(L)
$ is strongly hyperbolic relative to the collection of cusp subgroups
\cite{farb-relhyp}.

Similarly, if $\Sigma$ is a compact 2-manifold with boundary and genus greater than one, let $H_0$
denote the cyclic subgroup of $\pi_1( \Sigma )$ corresponding to a boundary component. Then
$\pi_1( \Sigma )$ is strongly hyperbolic relative to $H_0$ (cf. Proposition 2.10 of \cite{mahan-agt}). Hence the cover of
$\Sigma$ corresponding to $H_0$ is non-amenable. This implies further that the cover of $\Sigma \times S^1$
corresponding to $H_0 \oplus \ints$ is non-amenable.

Let $M$ be a prime 3-manifold admitting a non-trivial torus decomposition.  By passing to a finite cover, if necessary, and  using Theorem \ref{new}, we can assume that each piece
of the torus decomposition of $M$ is either a non-compact complete hyperbolic manifold of finite volume or of the form $\Sigma \times S^1$, with $\Sigma$ a compact surface with non-empty boundary and
genus at least two.

Let $M_1$ be a piece of the torus decomposition.
Note that each boundary component of $M_1$ is a torus. Then the cover of $M$ corresponding to the subgroup $\pi_1(M_1) \subset
\pi_1(M)$ has ends corresponding to the different torus boundary components of $M_1$. Since each end of this cover is
non-amenable by the above discussion we have the following lemma.

\begin{lemma} \label{nonam} Let $M$ be a prime 3-manifold admitting a non-trivial torus decomposition. Let $M_1$ be a piece of the torus decomposition.
Then the cover of $M$ corresponding to the subgroup $\pi_1(M_1) \subset
\pi_1(M)$  is
non-amenable. \end{lemma}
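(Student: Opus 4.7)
The plan is to show that the cover $\widetilde M$ of $M$ corresponding to $H=\pi_1(M_1)$ decomposes as a compact ``core'' together with non-amenable complementary regions, and to derive from this a linear isoperimetric inequality on $\widetilde M$.

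First, I would describe the geometry of $\widetilde M$ via the graph-of-groups decomposition of $\pi_1(M)$ dual to the torus decomposition. Since $H$ equals the full vertex group at a vertex $v_0$ of the Bass-Serre tree, it fixes $v_0$, so $\widetilde M$ contains a distinguished isometric lift of $M_1$ (the \emph{core}). Along each boundary torus $T_j\subset\partial M_1$, the cover extends into the adjacent piece $M_j'$ through the cover of $M_j'$ corresponding to $H\cap\pi_1(M_j')=\pi_1(T_j)$ (standard Bass-Serre), and this structure iterates deeper in the tree. Let $U=\widetilde M\setminus M_1$.

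Second, I would apply the preceding Structure-of-Ends discussion to conclude non-amenability of $U$. In a bounded collar of each $T_j$, $U$ agrees isometrically with a collar in the cover of $M_j'$ corresponding to $\pi_1(T_j)$: either the torus-cusp cover $L_T$ (when $M_j'$ is finite-volume hyperbolic) or $\widetilde\Sigma_\partial\times S^1$ (when $M_j'\cong\Sigma\times S^1$), both of which have just been shown to be non-amenable. Iterating through the nested pieces (only finitely many isomorphism types appear in the decomposition) yields a uniform linear isoperimetric inequality on all of $U$.

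Third, I would combine the non-amenability of $U$ with the compactness of the core. For any precompact $\Omega\subset\widetilde M$, write $\Omega=(\Omega\cap M_1)\sqcup(\Omega\cap U)$; the first term contributes at most $\mathrm{vol}(M_1)$, while the isoperimetric inequality on $U$ (absorbing the bounded area $|\partial M_1|$ into $\partial(\Omega\cap U)$) gives $|\Omega\cap U|\leq C(|\partial\Omega|+1)$, whence
\[|\Omega|\leq C'\bigl(|\partial\Omega|+1\bigr),\]
which is equivalent to non-amenability. The principal obstacle is the propagation in step two: showing that non-amenability of the first-layer torus covers persists through the iterated nested structure. The finiteness of the torus decomposition together with the uniform non-amenability constants of each building-block cover makes this tractable, but the bookkeeping of boundary contributions along intermediate tori requires care.
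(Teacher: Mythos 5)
Your proposal follows essentially the same route as the paper: the paper's proof of Lemma \ref{nonam} consists precisely of the preceding ``Structure of Ends'' discussion (non-amenability of the cover of a finite-volume hyperbolic piece corresponding to a cusp torus subgroup, and of $\Sigma\times S^1$ corresponding to a boundary torus subgroup, both via strong relative hyperbolicity), followed by the assertion that the cover of $M$ corresponding to $\pi_1(M_1)$ is a compact core with ends that are non-amenable by that discussion. The propagation/bookkeeping issue you flag in your second step is in fact passed over silently in the paper, so your write-up is, if anything, slightly more explicit about the one point that requires care.
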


\noindent {\bf Coboundedness:} \vspace{2mm}

\begin{defn}
Let $(Y,d)$ be a  metric space. We say that two subsets $Y_1, Y_2
\subset Y$ are mutually cobounded if for all $D \geq 0$, the subset
$$\{ (y_1, y_2) \in Y_1 \times Y_2 \,\mid\, d(y_1, y_2) \leq D \}$$
is compact. \end{defn}

See Section 1 of \cite{mahan-mbdl} for a closely  related
definition in the context of hyperbolic metric spaces.

The next two lemmas give standard examples of  cobounded subsets of
hyperbolic metric spaces (see \cite{farb-relhyp}, \cite{bowditch-relhyp}
or \cite[Section 2.2]{mahan-split} for proofs).

 We note from Lemma \ref{rtc} below that these examples give groups $Q$ and subgroups $R$ such that no infinite normal subgroup of $Q$ is contained in $R$.

\begin{lemma} Let $M$ be a non-compact complete hyperbolic $n$-manifold
of finite volume. Let $H_1$ and $H_2$ be lifts of a
neighborhood of a cusp in $M$ to the universal cover $\widetilde{M}$. Then $H_1$ and $H_2$ are cobounded horoballs
in $\widetilde{M}$. Equivalently, $\partial H_1$ and $\partial H_2$
are cobounded horospheres
in $\widetilde{M}$. \label{horo-cob} \end{lemma}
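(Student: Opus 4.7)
The plan is to reduce the coboundedness assertion to an elementary computation in the upper half-space model of $\Hyp^n$. Since $H_1$ and $H_2$ are closed subsets of $\widetilde{M}$, the set $\{(y_1,y_2) \in H_1 \times H_2 : d(y_1,y_2) \leq D\}$ is automatically closed, so it suffices to show that for each $D \geq 0$ the two one-sided sets $\{y_1 \in H_1 : d(y_1,H_2) \leq D\}$ and $\{y_2 \in H_2 : d(y_2,H_1) \leq D\}$ are compact: the set of pairs is then closed with precompact projections in a proper metric space, hence compact.

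After applying an isometry of $\Hyp^n$, work in the upper half-space model $\{(x,t) : x \in \R^{n-1},\, t>0\}$ with $H_2$ based at the point at infinity, so that $H_2 = \{t \geq c\}$. Then $H_1$, being based at some finite $\xi_1 \in \R^{n-1}$, is a closed Euclidean ball of some radius $r$ tangent to $\R^{n-1}$ at $\xi_1$. By the Margulis lemma applied to the cusps of $M$, distinct lifts of a sufficiently small cusp neighborhood are disjoint, which in these coordinates translates to $2r < c$.

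The main computation is that for any $(x,t)$ with $t < c$ the hyperbolic distance to $\{t=c\}$ is realized along the vertical geodesic and equals $\log(c/t)$; this follows from the standard formula $\cosh d(p,q) = 1 + |p-q|^2/(2 t_p t_q)$ by minimizing the right-hand side over the foot of the perpendicular. Consequently
\[
\{y_1 \in H_1 : d(y_1, H_2) \leq D\} \;=\; \{(x,t) \in H_1 : t \geq c e^{-D}\},
\]
which is a Euclidean spherical cap of $H_1$ bounded away from $\R^{n-1}$; being Euclidean-closed, Euclidean-bounded, and uniformly above $t=0$, it is compact in $\Hyp^n$. Conjugating by an isometry that sends $\xi_1$ to $\infty$ and running the same argument gives the symmetric statement for $H_2$, so both projections are compact.

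The only genuine input beyond the calculation is the disjointness $H_1 \cap H_2 = \emptyset$, and this is exactly what the Margulis lemma provides for small enough cusp neighborhoods of a finite-volume hyperbolic manifold; it is also precisely the hypothesis under which the lemma takes its lifts. Modulo that standard input, the argument is just the observation that a horoball ``pinches'' exponentially toward its basepoint at infinity, so it cannot stay close to another horoball based at a different boundary point.
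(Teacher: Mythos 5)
Your proof is correct. One point worth noting up front: the paper does not actually supply an argument for this lemma --- it is stated as a standard fact with pointers to the relative-hyperbolicity literature (Farb, Bowditch, and Section~2.2 of the third reference), where mutual coboundedness of distinct horoballs is usually extracted from the machinery of electric geometry and bounded horoball penetration. Your route is genuinely different and more elementary: you normalize in the upper half-space model so that $H_2=\{t\ge c\}$ and $H_1$ is a Euclidean ball tangent to $\R^{n-1}$ at a finite point, use the exact formula $d\bigl((x,t),\{t=c\}\bigr)=\log(c/t)$ to identify $\{y_1\in H_1 : d(y_1,H_2)\le D\}$ with the cap $\{t\ge ce^{-D}\}$ of $H_1$, observe that this cap is compact, and symmetrize. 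The reduction from compactness of the set of pairs to compactness of the two projections is also handled correctly (a closed subset of a product of compacta). Two minor remarks: the input you really need is not disjointness per se but that the two lifts are based at \emph{distinct} parabolic fixed points, which is automatic for distinct lifts of an embedded cusp neighborhood and is what licenses your normalization (one base point at $\infty$, the other finite); and disjointness only gives $2r\le c$ rather than $2r<c$, but neither inequality is actually used in the compactness argument. What your approach buys is a short self-contained proof requiring nothing beyond the half-space distance formula; what the cited approach buys is a statement that survives in the coarser setting of relatively hyperbolic groups, where no explicit model is available.
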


\begin{lemma} Let $\Sigma$ be a hyperbolic surface with (possibly empty) totally geodesic boundary. Let $\sigma_1$ and $\sigma_2$ be lifts of a
closed geodesic in $\Sigma$ to the universal cover $\widetilde{\Sigma}$. Then $\sigma_1$ and $\sigma_2$ are cobounded
in $\widetilde{\Sigma}$. \label{geod-cob} \end{lemma}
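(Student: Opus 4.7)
The plan is to reduce coboundedness to the classical hyperbolic geometry of $\widetilde{\Sigma} \subseteq \Hyp^2$, where $\widetilde{\Sigma}$ is either all of $\Hyp^2$ (if $\Sigma$ is closed) or the closed convex hull of the limit set of the Fuchsian group $\pi_1(\Sigma)$, bounded by geodesics (if $\Sigma$ has non-empty totally geodesic boundary). The closed geodesic in $\Sigma$ is the image of the axis of some hyperbolic element $g \in \pi_1(\Sigma)$, so each lift is the axis of a conjugate $hgh^{-1}$. Two distinct lifts $\sigma_1, \sigma_2$ are therefore the axes of two hyperbolic elements $g_1, g_2 \in \pi_1(\Sigma)$ with different axes.

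First I would establish that $\sigma_1$ and $\sigma_2$ have disjoint endpoint pairs on $\boundary \Hyp^2$. If they shared exactly one endpoint $\xi$, then the commutator $[g_1, g_2]$ would be a non-trivial parabolic isometry fixing $\xi$, contradicting the fact that $\pi_1(\Sigma)$ is purely hyperbolic. The latter holds because the convex core $\widetilde{\Sigma}/\pi_1(\Sigma)$ is compact (it is $\Sigma$ itself, with totally geodesic boundary), so there are no cusps and hence no parabolic elements. If instead $\sigma_1$ and $\sigma_2$ shared both endpoints at infinity, they would coincide, contradicting the assumption that the lifts are distinct.

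Next, standard hyperbolic geometry yields that two geodesic lines in $\Hyp^2$ with four distinct endpoints diverge in all four directions: the distance between points moving out along the two geodesics grows without bound (exponentially, away from the common perpendicular). Thus for any $D \geq 0$ the set
$$\{(y_1, y_2) \in \sigma_1 \times \sigma_2 \mid d(y_1, y_2) \leq D\}$$
is bounded in $\widetilde{\Sigma} \times \widetilde{\Sigma}$. Since $\sigma_1$ and $\sigma_2$ are closed subsets of the proper metric space $\widetilde{\Sigma}$ and $d$ is continuous, the set is also closed, hence compact, yielding coboundedness.

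The main obstacle is the non-asymptotic endpoint claim in the first step. The cleanest route is the commutator argument above, but it requires the absence of parabolic elements in $\pi_1(\Sigma)$; an alternative is to invoke discreteness directly, using the $\PSL(2, \R)$-structure to deduce that two hyperbolic isometries of $\Hyp^2$ sharing exactly one fixed point on $\boundary \Hyp^2$ generate a non-discrete subgroup. Once this structural fact is in hand, the remaining divergence estimate is a routine consequence of negative curvature in dimension two.
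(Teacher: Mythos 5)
Your argument is correct, but it is worth knowing that the paper does not actually prove Lemma \ref{geod-cob}: it declares the statement a ``standard example'' and defers to the literature on relatively hyperbolic groups (\cite{farb-relhyp}, \cite{bowditch-relhyp}, and Section 2.2 of \cite{mahan-split}), where coboundedness of distinct peripheral cosets or geodesics is established in a more general coarse framework. Your route is a direct, self-contained classical argument: realize the lifts as axes of hyperbolic elements, rule out shared endpoints at infinity via the commutator-is-parabolic trick (valid here because $\Sigma$ compact with totally geodesic boundary makes $\pi_1(\Sigma)$ convex cocompact, hence parabolic-free), and then invoke divergence of geodesics in $\Hyp^2$ with four distinct endpoints together with properness of $\widetilde{\Sigma}$ to upgrade boundedness to compactness. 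This buys a concrete, checkable proof at the cost of being specific to constant curvature, whereas the cited references give the statement uniformly for (relatively) hyperbolic spaces, which is the setting the authors also need for Lemma \ref{horo-cob}. Two small points to make explicit: the lemma is only true for \emph{distinct} lifts (the diagonal ruins compactness otherwise), which you correctly assume but the statement does not say; and when $\Sigma$ has boundary one should note that $\widetilde{\Sigma}$ is a convex subset of $\Hyp^2$, so its intrinsic metric agrees with the restricted hyperbolic metric and the divergence estimate applies verbatim, including when the closed geodesic in question is itself a boundary component.
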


Now let $N = \Sigma \times S^1$, where $\Sigma$ is an orientable surface with boundary, having genus greater than one. Equip $N$
with a product metric. It follows
from Lemma \ref{geod-cob}
that any two lifts $\sigma_1$ and $\sigma_2$ of the boundary components
of $\Sigma$ to the universal cover $\widetilde{N}$ are cobounded.
Let $E_i = \sigma_i \times \rls$ denote the corresponding lift of bounding tori of $N$
 to $\widetilde{N}$ containing $\sigma_i$.
Thus we have the following:

\begin{corollary} Let $N = \Sigma \times S^1$, where $\Sigma$ is an
orientable
surface with boundary, having genus greater than one.  Equip $N$
with a product Riemannian metric, and denote by $d_{\widetilde{\Sigma}}$
the induced distance function on  $\widetilde{\Sigma}$.
Let $\sigma_1$ and $\sigma_2$ be lifts of the boundary components of
$\Sigma$ to the universal cover $\widetilde{N}$.
Let $E_i = \sigma_i \times \rls$, $i\,=\, 1\, ,2$, denote the
corresponding lifts of bounding tori of $N$
 to $\widetilde{N}$ containing $\sigma_i$.  Let $p_i \in
\sigma_i$ be such that $$d_{\widetilde{\Sigma}} (p_1, p_2)
= d_{\widetilde{\Sigma}} (\sigma_1, \sigma_2)\, .$$ Then for all $D \geq
0$, there exists $\epsilon > 0$ such that
$$\{ (y_1, y_2) \in E_1 \times E_2 | d(y_1, y_2) \leq D \} \subset
(B_\epsilon (p_1) \times \rls) \times
(B_\epsilon (p_2) \times \rls)\, ,$$ where $B_\epsilon (p_i)$ denotes
the $\epsilon$-neighborhood of $p_i$ in $\sigma_i$. \label{critcor}
\end{corollary}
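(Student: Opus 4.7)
The plan is to reduce the statement to the two-dimensional coboundedness of Lemma~\ref{geod-cob} via the orthogonal projection $\pi \colon \widetilde{N} = \widetilde{\Sigma} \times \rls \to \widetilde{\Sigma}$. First I would note that because the metric on $\widetilde{N}$ is a Riemannian product, $\pi$ is $1$-Lipschitz; concretely, if $y_i = (x_i, t_i) \in \widetilde{\Sigma}\times\rls$, then
\[
d_{\widetilde{N}}(y_1,y_2)^2 \,=\, d_{\widetilde{\Sigma}}(x_1,x_2)^2 + (t_1-t_2)^2,
\]
so $d_{\widetilde{N}}(y_1,y_2)\le D$ forces $d_{\widetilde{\Sigma}}(x_1,x_2)\le D$.

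Next, since $\pi(E_i) = \sigma_i$ and Lemma~\ref{geod-cob} asserts that $\sigma_1$ and $\sigma_2$ are cobounded in $\widetilde{\Sigma}$, the set
\[
K_D \,:=\, \{(x_1,x_2) \in \sigma_1 \times \sigma_2 \mid d_{\widetilde{\Sigma}}(x_1,x_2) \le D\}
\]
is compact (this is exactly the definition of coboundedness). Because each $\sigma_i$ is a geodesic line isometric to $\rls$, compactness of $K_D$ means it is bounded in each factor. Hence there exists $\epsilon > 0$ such that $K_D \subset B_\epsilon(p_1)\times B_\epsilon(p_2)$: one may take $\epsilon$ to exceed the maximum over $(x_1,x_2) \in K_D$ of $d_{\widetilde{\Sigma}}(x_i, p_i)$ for $i = 1, 2$, which is finite by compactness. (If $K_D$ is empty, the corollary is vacuous.)

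Finally, for any pair $(y_1,y_2) \in E_1 \times E_2$ with $d_{\widetilde{N}}(y_1,y_2)\le D$, writing $y_i = (x_i, t_i)$ gives $(x_1,x_2) \in K_D$ by the first step, and then $x_i \in B_\epsilon(p_i)$ by the second, so that $y_i \in B_\epsilon(p_i) \times \rls$. This yields the required inclusion.

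I do not anticipate any substantial obstacle: the argument is essentially just unpacking the product metric and invoking Lemma~\ref{geod-cob}. The only point to keep straight is that $B_\epsilon(p_i)$ denotes the $\epsilon$-neighborhood of $p_i$ inside the geodesic $\sigma_i$; but since $\sigma_i$ is a geodesic in $\widetilde{\Sigma}$, the intrinsic metric on $\sigma_i$ agrees with $d_{\widetilde{\Sigma}}$ restricted to $\sigma_i$, so no discrepancy arises.
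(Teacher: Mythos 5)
Your argument is correct and takes essentially the same route as the paper: the paper deduces Corollary \ref{critcor} directly from Lemma \ref{geod-cob} together with the product structure of the metric on $\widetilde{N}=\widetilde{\Sigma}\times\rls$, which is exactly your reduction via the $1$-Lipschitz projection and compactness of the cobounded pair set. The paper records no more detail than this, so your write-up just makes the reduction explicit.
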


Let, as at the beginning of this subsection,
$M$ be a prime 3-manifold admitting a non-trivial torus decomposition.
Let $M_1$ be a piece of the torus decomposition.
Let $\widetilde{M}$ denote the universal cover of $M$ equipped with some equivariant Riemannian metric.
 The bounding tori of $M_1$ lift to copies  of the Euclidean plane
in  $\widetilde{M}$.
 Let $\{ \E_\alpha \}$ be the collection of lifts (to $\widetilde{M}$) of all tori in the torus decomposition of $M$. If $M_1$ is a hyperbolic piece, then by
Lemma \ref{horo-cob}, it follows that any pair $E_1, E_2 \in \{ \E_\alpha \}$  which are lifts of bounding tori of $M_1$ is cobounded in $\widetilde{M}$.
In fact since the lifts of any hyperbolic piece of the torus decomposition separate $\widetilde{M}$, it follows that there exist
$E_1, E_2 \in \{ \E_\alpha \}$ separated by such a lift. Hence we have:

\begin{lemma} Let $M$ be a prime 3-manifold admitting a non-trivial torus decomposition such that at least one
of the pieces of the torus decomposition is hyperbolic.
Let $\{ \E_\alpha \}$ be the collection of lifts (to $\widetilde{M}$) of all tori in the torus decomposition of $M$. There exist $E_1, E_2 \in \{ \E_\alpha \}$
which are cobounded in $\widetilde{M}$. \label{hypt-cob} \end{lemma}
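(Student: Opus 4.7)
The plan is to choose $E_1, E_2$ as two horospherical boundary planes of a single lifted copy of a hyperbolic piece, apply Lemma~\ref{horo-cob} inside that copy, and then upgrade coboundedness to the full $\widetilde{M}$ by exploiting the undistortion of the hyperbolic piece's universal cover.

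First I would fix a hyperbolic piece $M_1$ of the torus decomposition (existence by hypothesis) and, after passing to the finite cover supplied by Theorem~\ref{new} if needed, view $M_1$ as the truncation of a complete finite-volume hyperbolic 3-manifold. I would endow $M$ with a smooth Riemannian metric coinciding with the hyperbolic one on $M_1$ and extending to NPC metrics on each neighboring piece $P$, with respect to which the bounding tori of $P$ are convex (e.g. via the Leeb construction). In $\widetilde{M}$, each copy of $\widetilde{M_1}$ is then isometric to $\mathbb{H}^3$ with an equivariant family of open horoballs removed, and its boundary planes are precisely the elements of $\{\EE_\alpha\}$ that lift the bounding tori of $M_1$. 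I would select $E_1, E_2 \in \{\EE_\alpha\}$ to be two distinct such boundary planes of one copy of $\widetilde{M_1}$; Lemma~\ref{horo-cob} applied to these horospheres inside $\widetilde{M_1}$ immediately yields their coboundedness there, with respect to the hyperbolic distance.

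The key step is to transfer this to coboundedness in $\widetilde{M}$. For this I would argue that the inclusion $\widetilde{M_1} \hookrightarrow \widetilde{M}$ is isometric. The Bass--Serre tree dual to the torus decomposition forces any path in $\widetilde{M}$ joining two points of $\widetilde{M_1}$ that exits $\widetilde{M_1}$ through a boundary plane $E$ to re-enter through the same $E$. By convexity of $E$ inside the neighboring piece, the excursion length is at least the intrinsic Euclidean distance along $E$ between the entry and exit points; the hyperbolic geodesic joining those same points inside $\widetilde{M_1}$ has length at most this Euclidean distance, via the horospherical inequality $2\sinh^{-1}(d/2) \le d$ in $\mathbb{H}^3$. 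Hence excursions can always be replaced by weakly shorter in-$\widetilde{M_1}$ paths, so $d_{\widetilde{M}}|_{\widetilde{M_1}} = d_{\widetilde{M_1}}$ and the coboundedness of $E_1, E_2$ transfers from $\widetilde{M_1}$ to $\widetilde{M}$.

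The main obstacle is this third step: rigorously verifying that $\widetilde{M_1}$ embeds isometrically in $\widetilde{M}$. The convexity-plus-horospherical-inequality sketch above requires setting up the gluing metric carefully so that every boundary plane is convex in the adjoining piece; an alternative is to invoke the Kapovich--Leeb theorem on undistorted JSJ pieces, which directly gives a quasi-isometric embedding of $\widetilde{M_1}$ in $\widetilde{M}$, and coboundedness in $\widetilde{M_1}$ then transfers via the standard inequality $d_{\widetilde{M_1}}(y_1, y_2) \le K\, d_{\widetilde{M}}(y_1, y_2) + C$ without needing the explicit convexity argument. I would also note, as the authors do, that the lifts of any hyperbolic piece separate $\widetilde{M}$, so the cobounded pair $E_1, E_2$ can in fact be chosen separated by a third lift in $\{\EE_\alpha\}$.
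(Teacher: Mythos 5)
Your proposal is correct and follows the same route as the paper: the paper likewise takes $E_1,E_2$ to be (distinct) boundary horosphere lifts of a single copy of the hyperbolic piece and invokes Lemma~\ref{horo-cob}, together with the observation that lifts of the hyperbolic piece separate $\widetilde{M}$. The only difference is that the paper simply asserts that coboundedness in $\widetilde{M_1}$ passes to $\widetilde{M}$, whereas you correctly identify the undistortion of $\widetilde{M_1}$ in $\widetilde{M}$ as the point needing justification and supply a valid argument for it (convexity of the gluing tori \`a la Leeb, or the Kapovich--Leeb quasi-isometric embedding of JSJ pieces, either of which gives $d_{\widetilde{M_1}} \le K\, d_{\widetilde{M}} + C$ and hence the required compactness).
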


Next assume that $M$ is a prime 3-manifold admitting a non-trivial torus decomposition such that
 all pieces of $M$ are Seifert-fibered, i.e., $M$ is a graph manifold. By passing to a finite-sheeted cover if necessary
we may assume, by Theorem \ref{new}, that each piece of the torus decomposition  is of the form $\Sigma \times S^1$, where $\Sigma$ is a compact surface
with boundary and genus greater than one.

In the universal cover $\widetilde{M}$, let
$\widetilde{M}_1$, $\widetilde{M}_2$, $\widetilde{M}_3$,
$\widetilde{M}_4$ be a sequence of lifts of Seifert-fibered pieces such
that $\widetilde{M}_i \bigcap \widetilde{M}_{i+1} = E_i$, for $i = 1,
2, 3$, where
$E_i$ is the universal cover of a torus $T_i$ in $M$. We shall show
that $E_1$ and $E_3$ satisfy the conclusions of Lemma \ref{hypt-cob}.

Let  $M_i = \Sigma_i \times \alpha_i$ where $\alpha_i$ is
a unit circle. Assume without loss of generality (by passing to finite-sheeted covers if necessary)
that $\{M_i\}_{i=1}^4$ are embedded submanifolds of $M$ and that ${M_i}
\bigcap{M_{i+1}} = T_i$, for $i = 1, 2, 3$, where
$T_i$ is an embedded essential torus in ${M}$ appearing in the torus decomposition.
Also,  $T_i = \sigma_i \times \alpha_i$ where $\sigma_i$ is a boundary curve of $\Sigma_i $ and $\alpha_i$ is the circle fiber.

By Corollary \ref{critcor} the set
of points $x \in E_1$ and $y \in E_2$ with $d(x,y) \leq D$ must lie
within $(B_\epsilon (p_1) \times \widetilde{\alpha_2}) \times
(B_\epsilon (p_2) \times \widetilde{\alpha_2})$, where $[p_1,p_2]$
denotes the geodesic in $\widetilde{\Sigma}_2$ joining
the appropriate lifts of $ \sigma_2$.  The same argument shows that
the set
of points $u \in E_2$ and $v \in E_3$ with $d(u,v) \leq D$ must lie
within $(B_\epsilon (q_1) \times \widetilde{\alpha_3}) \times
(B_\epsilon (q_2) \times \widetilde{\alpha_3})$, where $[q_1,q_2]$
denotes the geodesic in $\widetilde{\Sigma_3}$ joining
the appropriate lifts of $\sigma_3$. Next,
$(B_\epsilon (p_2) \times \widetilde{\alpha_2})$ and $(B_\epsilon (q_1)
\times \widetilde{\alpha_3}) $
are cobounded as the fibers of $M_2$ and $M_3$ do not agree, being
different Seifert components. Hence we conclude that
$E_1$ and $E_3$ are cobounded in $\widetilde{M}$, in other words,
$E_1$ and $E_3$ satisfy the conclusions of Lemma \ref{hypt-cob} as
claimed above.

Combining this with
Lemma \ref{hypt-cob}, we conclude:

\begin{prop} Let $M$ be a prime 3-manifold admitting a non-trivial torus decomposition. Let $\{ \E_\alpha \}$ be the collection of lifts (to $\widetilde{M}$) of all tori in the torus decomposition of $M$. There exist $E_1, E_2 \in \{ \E_\alpha \}$
which are cobounded in $\widetilde{M}$.  \label{cob} \end{prop}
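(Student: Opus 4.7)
The plan is to prove Proposition \ref{cob} by splitting into two cases according to whether the torus decomposition of $M$ contains a hyperbolic piece. If some piece $M_1$ of the decomposition is hyperbolic, then Lemma \ref{hypt-cob} applies directly and we are done: the lifts of the two bounding tori of such a piece, which are separated by a lift of $M_1$ inside $\widetilde{M}$, provide the required cobounded pair $E_1, E_2 \in \{\EE_\alpha\}$.

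The substantive case is therefore the graph-manifold case, where every piece of the torus decomposition is Seifert-fibered. Here I would first invoke Theorem \ref{new} to pass to a finite cover of $M$ (which does not affect the conclusion, since lifts of tori in the cover are a subfamily of lifts in $\widetilde{M}$) so that every piece takes the form $\Sigma \times S^1$ with $\Sigma$ a compact surface of genus at least two with boundary. Then I would pick a length-four chain of adjacent lifted pieces $\widetilde{M}_1, \widetilde{M}_2, \widetilde{M}_3, \widetilde{M}_4$ in $\widetilde{M}$, meeting successively along lifts $E_1, E_2, E_3$ of tori in the decomposition, and show that $E_1$ and $E_3$ are cobounded.

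The key step is to apply Corollary \ref{critcor} twice. In $\widetilde{M}_2$, coboundedness forces any pair $(x, u) \in E_1 \times E_2$ with $d(x,u) \leq D$ to lie in a product $(B_\epsilon(p_1) \times \widetilde{\alpha}_2) \times (B_\epsilon(p_2) \times \widetilde{\alpha}_2)$, where $\widetilde{\alpha}_2$ is the fiber direction of $M_2$. In $\widetilde{M}_3$, the corresponding statement for a pair $(u', y) \in E_2 \times E_3$ with $d(u', y) \leq D$ pins $u'$ down to $B_\epsilon(q_1) \times \widetilde{\alpha}_3$, with $\widetilde{\alpha}_3$ the fiber direction of $M_3$. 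For a pair $(x, y) \in E_1 \times E_3$ with $d(x,y) \leq D$, the triangle inequality applied along a geodesic from $x$ to $y$ (passing within bounded distance of $E_2$) forces a corresponding matching point $u \in E_2$ to lie in both strips simultaneously.

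The crux of the graph-manifold case is then the observation that fibers of adjacent Seifert-fibered pieces in the torus decomposition are not homotopic in the shared boundary torus, so the two Euclidean line-directions $\widetilde{\alpha}_2$ and $\widetilde{\alpha}_3$ inside $E_2 \cong \R^2$ are transverse. Consequently $(B_\epsilon(p_2) \times \widetilde{\alpha}_2) \cap (B_\epsilon(q_1) \times \widetilde{\alpha}_3)$ is a bounded subset of $E_2$, and tracing back through the two applications of Corollary \ref{critcor} confines $x$ and $y$ themselves to compact sets. I expect the main obstacle to be making this transversality-plus-triangle-inequality composition quantitatively precise, but once the bounded intersection in $E_2$ is established the coboundedness of $E_1$ and $E_3$ follows by the same type of argument used to prove Corollary \ref{critcor}. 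Combined with the hyperbolic case handled by Lemma \ref{hypt-cob}, this completes the proof.
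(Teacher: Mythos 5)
Your proposal follows essentially the same route as the paper: the hyperbolic case is dispatched by Lemma \ref{hypt-cob}, and the graph-manifold case passes to the cover of Theorem \ref{new}, takes a chain $\widetilde{M}_1,\dots,\widetilde{M}_4$ meeting along $E_1,E_2,E_3$, applies Corollary \ref{critcor} twice, and uses the non-homotopic fibers of adjacent Seifert pieces to bound the intersection of the two strips in $E_2$. Your explicit triangle-inequality step locating an intermediate point on $E_2$ is a detail the paper leaves implicit, but the argument is the same.
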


In fact we have shown the following, which is the main Proposition of this subsection and might be of independent interest.

\begin{prop}  Let $M$ be a prime 3-manifold admitting a non-trivial torus decomposition and let $M_1$
be one of the pieces of the decomposition. Then $\pi_1(M_1)$ has finite height in $\pi_1(M)$. \label{remark-torus1}
\end{prop}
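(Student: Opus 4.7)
The strategy is to combine Proposition \ref{cob} with the Bass--Serre tree $T$ dual to the torus decomposition of $M$. Write $G = \pi_1(M)$ and $R = \pi_1(M_1)$; an element $h \in G$ lies in the conjugate $g R g^{-1}$ if and only if $h$ fixes the vertex $gR$ of $T$. Consequently, an obstruction to finite height at level $n$ would be a nontrivial $h \in G$ fixing $n$ distinct vertices of $T$ of type $M_1$, and therefore fixing every edge of the finite subtree $T_h \subset T$ they span.

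The key input is the following lemma: if $E, E' \subset \widetilde{M}$ are mutually cobounded lifts of tori in the torus decomposition, then the intersection of their stabilizers in $G$ is trivial. Indeed each stabilizer is a copy of $\Z^2$ acting freely and cocompactly by translations on the Euclidean plane $E$ (respectively $E'$), so any nontrivial $h$ in the intersection translates both planes nontrivially. Picking $x \in E$, $y \in E'$ and setting $D = d(x, y)$, isometric invariance gives $d(h^n x, h^n y) = D$ for every $n$, so the pairs $(h^n x, h^n y)$ all lie in the compact set $\{(u,v) \in E \times E' \mid d(u,v) \leq D\}$. This contradicts the fact that $h^n x$ leaves every compact subset of $E$.

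To conclude, one promotes Proposition \ref{cob} to a uniform statement. After passing to the finite cover of Theorem \ref{new}, revisiting the hyperbolic case (Lemma \ref{hypt-cob}) and the four-consecutive-piece graph-manifold argument yields an integer $N$ such that every geodesic segment in $T$ of combinatorial length at least $N$ contains two edges whose corresponding planes in $\widetilde{M}$ are cobounded. If $h$ fixes sufficiently many vertices of $T$ then $T_h$ contains such a long geodesic, so $h$ fixes a cobounded pair of edges and the key lemma forces $h = 1$; this bounds the height of $R$ in $G$. The principal obstacle is precisely this promotion: Proposition \ref{cob} only guarantees the existence of a single cobounded pair, so one must revisit its proof and exploit the cocompactness of the $G$-action on $T$ (equivalently, the finiteness of the graph-of-groups decomposition) to ensure that every long enough geodesic in $T$ picks up such a pair.
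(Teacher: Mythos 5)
Your key lemma is correct and is exactly the mechanism the paper relies on: it is the argument of Lemma \ref{ncob} applied to the stabilizers of two cobounded planes. You are also right that Proposition \ref{cob} as literally stated only produces a single cobounded pair and must be upgraded to a uniform statement; the paper is no more explicit than you are (its entire proof of Proposition \ref{remark-torus1} is the sentence ``In fact we have shown the following''), but the preceding discussion does supply the uniform version you ask for, since it shows that any two lifted tori separated by the lift of a hyperbolic piece are cobounded and that in any chain of four consecutive Seifert lifts the two outer planes are cobounded. So up to that point your reconstruction matches the paper's intent.

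The genuine gap is in your final step: ``if $h$ fixes sufficiently many vertices of $T$ then $T_h$ contains such a long geodesic'' is false, because the $n$ fixed vertices in the orbit of $v_{M_1}$ may all be adjacent to one common vertex $w$, so that $T_h$ is a star of diameter $2$ with $n$ leaves containing no geodesic of three edges, no matter how large $n$ is. Moreover this configuration really occurs: if $M_1$ is adjacent in the decomposition to a Seifert piece $M_2=\Sigma_2\times S^1$, the fiber class $z$ of $M_2$ is central in $\pi_1(M_2)$ and lies in the edge group $\pi_1(T)\subset \pi_1(M_1)$, hence $z\in b\,\pi_1(M_1)\,b^{-1}$ for every $b\in\pi_1(M_2)$, while the cosets $b\,\pi_1(M_1)$ are pairwise distinct as $b$ ranges over the infinitely many cosets of $\pi_1(T)$ in $\pi_1(M_2)$. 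Thus infinitely many distinct conjugates of $\pi_1(M_1)$ share the infinite cyclic subgroup $\langle z\rangle$, and the height in the sense defined in the paper (and in \cite{gmrs}) is not finite in this situation; for the same reason, $k$-acylindricity of the action (Proposition \ref{remark-torus}) does not by itself bound the height, since a fixed star of diameter $2$ can have unboundedly many leaves. What your argument (and the paper's) does establish is the weaker assertion that the intersection of \emph{all} conjugates of $\pi_1(M_1)$ is trivial, equivalently that $\pi_1(M_1)$ contains no nontrivial normal subgroup of $\pi_1(M)$: an infinite such subgroup would stabilize every lift of $M_1$, in particular two cobounded ones, contradicting your key lemma. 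That weaker statement is all that Corollary \ref{pppcor} actually requires, so the downstream applications are unaffected, but you should not claim finite height from this line of reasoning.
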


\medskip

Infinite normal subgroups offer the opposite situation, as shown by the
following lemma.

\begin{lemma}\label{rtc}
Let $M$ be a topological space and $A \subset M$ an incompressible
subspace (i.e., $i: A \longrightarrow M$ induces
an injective map $i_\ast : \pi_1(A) \longrightarrow \pi_1(M)$). Suppose
$N$
is an infinite normal subgroup of $\pi_1(M)$ contained in $\pi_1(A)$.
If $\widetilde A$ is a lift of $A$ to $\widetilde M$ (the universal cover of $M$), then $g \widetilde A$ and $h \widetilde A$ are never cobounded for any $g, h \in \pi_1(M)$, where we regard $\pi_1(M)$ as the group of deck transformations of $\widetilde M$ and $\widetilde M$ has been endowed with a $\pi_1(M)$-equivariant distance function $d$.
 \label{ncob} \end{lemma}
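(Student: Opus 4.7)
The plan is to exploit the normality hypothesis to produce an infinite $N$-orbit of point-pairs realizing the same distance, which directly obstructs compactness in the definition of coboundedness.

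First I would record the key algebraic consequence of the hypotheses: since $N$ is normal in $\pi_1(M)$ and contained in $\pi_1(A)$, for every $g \in \pi_1(M)$ and every $n \in N$, the element $g^{-1} n g$ lies again in $N \subset \pi_1(A)$. Because $A$ is incompressible, $\pi_1(A)$ is identified with the stabilizer of $\widetilde A$ in the deck group; consequently the stabilizer of the translate $g\widetilde A$ is the conjugate $g\pi_1(A)g^{-1}$. The calculation above shows $N \subset g\pi_1(A)g^{-1}$ for every $g$, so \emph{$N$ setwise stabilizes every translate $g\widetilde A$ of $\widetilde A$.}

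Next I would fix arbitrary $g, h \in \pi_1(M)$ and try to falsify the coboundedness condition for $Y_1 = g\widetilde A$ and $Y_2 = h\widetilde A$. Choose any $y_1 \in g\widetilde A$ and $y_2 \in h\widetilde A$, and set $D := d(y_1, y_2)$. For each $n \in N$, the first paragraph gives $n \cdot y_1 \in g\widetilde A$ and $n \cdot y_2 \in h\widetilde A$; since the $\pi_1(M)$-action is by isometries of the equivariant metric $d$, we have $d(n y_1, n y_2) = d(y_1, y_2) = D$. Therefore the family
\[
\{(n y_1,\, n y_2) : n \in N\} \subset \{(z_1, z_2) \in g\widetilde A \times h\widetilde A : d(z_1, z_2) \leq D\}.
\]

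Finally, to derive the contradiction I would use that the deck group action of $\pi_1(M)$ on $\widetilde M$ is properly discontinuous, so the orbit $N \cdot y_1$ is a closed discrete subset of $\widetilde M$, and it is infinite because $N$ is infinite (any point-stabilizer in the deck group is trivial). Thus the subset $\{(n y_1, n y_2) : n \in N\}$ of the right-hand side is an infinite discrete set in $\widetilde M \times \widetilde M$, so it is not contained in any compact set, and a fortiori the enclosing $D$-close set cannot be compact. This violates the definition of coboundedness, completing the proof. The only subtle point is ensuring that $N$ really acts on $g\widetilde A$ for every $g$ — handled by the normality step in the first paragraph — after which the argument is essentially automatic.
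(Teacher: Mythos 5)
Your proof is correct and is essentially the paper's own argument: the paper's pairs $(s_nx,\,gt_nx)$ with $s_n=gt_ng^{-1}$ are exactly your pairs $(my_1,\,my_2)$ with $m=gt_ng^{-1}\in N$, $y_1=x$, $y_2=gx$, so both proofs use normality to produce an infinite discrete family of point-pairs in $g\widetilde A\times h\widetilde A$ at a fixed distance. Your up-front observation that $N$ stabilizes every translate $g\widetilde A$ is just a cleaner packaging of the same conjugation trick.
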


\begin{proof}
It is enough to prove that $\widetilde A$ and $g \widetilde A$ are not cobounded for any $g \in \pi_1(M)$.
Let $\{t_n \}_{n=1}^\infty \subset N$ be an infinite subset and let $s_n:=gt_ng^{-1} \in N$. Fix $x \in \widetilde A$. Then
$s_nx \in \widetilde A, \ gt_nx \in \widetilde A$ and $d(s_nx, \ gt_nx) =d(x,gx)$. Since $\{gt_nx\}$ is a noncompact subset of $\widetilde M$, we see that
$\widetilde A$ and $g \widetilde A$ are not cobounded by taking
$D=d(x,gx)$.
\end{proof}

\subsection{K\"ahler Groups and 3-Manifold Groups}

In this Section we use the restrictions obtained above to rule
out various possibilities. In this section, the following possibilities
will be taken up and ruled out one-by-one.\\
(a) $M$ admits a non-trivial prime decomposition. \\
(b) $M$ admits a non-trivial torus decomposition. \\
(c) $M$ admits a $Sol$ geometric structure.\\
(d) $M$ admits a hyperbolic structure.\\

Spherical or elliptic $3$-manifolds have finite fundamental group and $3$-manifolds with $S^2 \times \rls$ geometry have virtually
cyclic fundamental group. These are ruled out by the hypothesis on $Q$.
Thus at the end of the discussion we shall conclude that $M$
is a Seifert-fibered space with Euclidean or hyperbolic base orbifold. We shall then proceed to extract further restrictions on
what $Q$ may be in case $M$ is Seifert-fibered.

\smallskip

We will use  the following proposition for (a) and (b):

We will also need the following simple lemma:

\begin{lemma}\label{plm}
If
$$1 \longrightarrow N_0 \longrightarrow \pi_1(S) \longrightarrow Q
\longrightarrow
1$$ is an exact sequence of finitely generated groups
such that $S$ is a closed orientable surface and $Q$ is an infinite non-virtually cyclic fundamental group of a 3-manifold,
then $N_0$ cannot be finitely generated. \label{impos} \end{lemma}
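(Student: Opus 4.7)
The plan is to assume for contradiction that $N_0$ is finitely generated, and then to force $Q$ to be either finite, virtually cyclic, or of a type that cannot be a closed $3$-manifold group, contradicting the hypotheses. The key structural ingredient is the classical fact that every finitely generated normal subgroup $N_0$ of a closed orientable surface group $\pi_1(S)$ is either trivial, of finite index, or (only in the torus case $S = T^2$) a subgroup of $\Z^2$ yielding a cyclic quotient. For $S$ of genus $\geq 2$ this rests on the observation that an infinite-index normal subgroup corresponds to a non-compact covering surface whose fundamental group is free; a finitely generated free normal subgroup of a closed surface group of genus $\geq 2$ is then necessarily trivial.

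Once this structural dichotomy is in place, a short case check rules out all but one configuration. If $N_0$ has finite index in $\pi_1(S)$, then $Q$ is finite, contradicting the hypothesis that $Q$ is infinite. If $S = T^2$ and $N_0$ is a proper nontrivial subgroup of $\Z^2$, then $Q$ is virtually cyclic, again contradicting the hypothesis. The only remaining possibility is $N_0 = \{1\}$, which forces $Q \cong \pi_1(S)$.

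It then suffices to show that $\pi_1(S)$ cannot itself be the fundamental group of any closed orientable $3$-manifold $M$ when $Q$ is infinite and not virtually cyclic; this excludes $S = S^2$ automatically and leaves $S$ of genus $\geq 1$. In each remaining case $\pi_1(S)$ is torsion-free and one-ended, so the prime decomposition of $M$ is trivial and $M$ is irreducible; the sphere theorem then gives $\pi_2(M) = 0$, and the simply connected $3$-manifold $\til{M}$ has $\pi_2 = 0$ and $H_3 = 0$, hence is contractible. Thus $M$ is aspherical, so $\pi_1(S) = \pi_1(M)$ would have cohomological dimension $3$; but $S$ is itself a $2$-dimensional $K(\pi_1(S),1)$, so $\operatorname{cd}(\pi_1(S)) = 2$, a contradiction. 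The main obstacle is arguably this last step---establishing asphericity of $M$ and then comparing cohomological dimensions---since the normal-subgroup structure theorem for surface groups is classical and the reduction to the case $N_0 = \{1\}$ is purely formal.
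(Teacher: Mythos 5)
Your proof is correct and follows essentially the same route as the paper's: both reduce, via the classification of finitely generated normal subgroups of closed surface groups, to the case $N_0=\{1\}$, where $Q\cong\pi_1(S)$ is forced to be commensurable with a surface group, which is impossible for an infinite, non-virtually-cyclic closed $3$-manifold group. The only difference is that you spell out the final impossibility (irreducibility, asphericity via the sphere theorem, and the cohomological dimension count $2\neq 3$), which the paper asserts without justification.
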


\begin{proof}
If $S$ has genus $g > 1$ then the only finitely generated normal subgroup of such a $\pi_1(S)$
is the trivial group. This forces $Q$ and $\pi_1(S)$ to be
commensurable, an impossibility.

 For $S$ a torus, $\pi_1(S) = \ints \oplus \ints$ and again, such an
exact sequence is impossible as it forces
 $Q$ and $\pi_1(S)$ to be commensurable.
\end{proof}

\noindent {\bf $M$ admits a non-trivial prime decomposition}: This follows from Proposition \ref{ppp} and Lemma \ref{plm} by taking $Q=\pi_1(M)$ and $R=\{1\}$.

\begin{comment}
Let $G$ be as in ($\ast$) such that $Q$ is the fundamental group of
a 3-manifold $M$ that admits a non-trivial prime decomposition.

Let, if possible, $X$ be a compact K\"ahler manifold with fundamental group $G$.
 Consider the cover $X_1$ of $X$ corresponding to the normal subgroup $N$. Note that universal cover $\widetilde M$ of $M$ has infinitely many ends and is non-amenable. Since $X_1$ is quasi-isometric to $\widetilde M$ by Lemma \ref{bn}, $X_1$ has the same properties and we can apply Theorem \ref{cut} by taking $H=N$.
Therefore there is a holomorphic map $\psi$ from a finite cover of $X$ to a Riemann surface $S$ with connected fibers
such that the pullback to
$\pi_1 (X) $ of some subgroup in $\pi_1(S)$ equals $N$. In particular the kernel
of $\psi_\ast$ is a  normal subgroup of $G$ contained in $N$. Hence we have an exact sequence (the surjectivity of the last map follows from Remark \ref{sur})
$$1 \longrightarrow  \frac{N}{ker(\psi_\ast)} \longrightarrow \pi_1(S)
\longrightarrow Q \longrightarrow 1$$
where $\frac{N}{ker(\psi_\ast)}$ is a finitely generated normal subgroup of $\pi_1(S)$ and $Q$ is a non-trivial non-virtually cyclic
free product.
This is impossible by Lemma \ref{impos}.

\begin{rmk} {\rm Note that we have not used the fact that $Q$ is a 3-manifold group here, but only that it is a non-trivial
non-virtually cyclic free product admitting finite index subgroups.}
\end{rmk}
\end{comment}

\smallskip

\noindent {\bf $M$ is prime and admits a non-trivial torus decomposition}:  This follows from Proposition \ref{remark-torus} 
(or from Proposition \ref{remark-torus1}) and
Corollary \ref{pppcor}.

\vspace{3mm}

\noindent {\bf $M$ is Sol} \vspace{2mm}

Let $G$ be as in ($\ast$) such that $Q$ is the fundamental group of
a 3-manifold $M$ which is Sol.

Since the group $G$ admits a surjection to the solvable non-nilpotent group
$\pi_1(M)$, Theorem \ref{brudnyi} shows that $G$ cannot be K\"ahler.

\vspace{3mm}

\noindent {\bf $M$  hyperbolic} \vspace{2mm}

Let $G$ be as in ($\ast$) such that $Q$ is the fundamental group of
a 3-manifold $M$ which is hyperbolic.

Since the group $G$ admits a surjection  to a closed hyperbolic
3-manifold group $Q$, $G$ cannot be K\"ahler by Theorem \ref{dp}.

\medskip

By the above discussion, it follows that
if $$ 1 \longrightarrow N \stackrel{i}{\longrightarrow} G
\stackrel{q}{\longrightarrow} Q \longrightarrow 1$$  is an exact
sequence of
groups such that \\
(a) $Q = \pi_1(M)$ is
the fundamental group of a closed 3-manifold $M$,\\
(b) $Q$ is infinite and not virtually cyclic, \\
(c) $G$ is  K\"ahler, and \\
(d) $N$ is finitely presented, \\
then $M$ is Seifert-fibered with base orbifold Euclidean or hyperbolic.

\vspace{3mm}

\noindent {\bf $M$  Seifert-fibered} \vspace{2mm}

Let $G$ be as in ($\ast$) such that $Q$ is the fundamental group of
a 3-manifold $M$ which is Seifert-fibered.

Given a surjective homomorphism $h: G \longrightarrow G_1$ with kernel
$K$, there is an Euler class
obstruction $e(h)$ to the existence of a section of $h$ associated to the exact sequence
$$ 1 \longrightarrow \frac{K}{[K,K]} \longrightarrow \frac{G}{[K,K]}
\longrightarrow G_1 \longrightarrow 1\, . $$
A recent Theorem of Arapura (Corollary 5.5 of \cite{arapura-hom}) asserts the following.

\begin{theorem} If a K\"ahler group $G$ admits a surjective
homomorphism $h$ to a surface group $G_1=\pi_1(S)$ of genus $g$ greater than one
with $g$  maximal, then the Euler class $e(h) \in H^2(G_1,
\frac{K}{[K,K]})$ is torsion, where $K$ is the kernel of $h$.
\label{torsion} \end{theorem}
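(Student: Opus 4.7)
\medskip

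The plan is to realize $h$ geometrically by a holomorphic map using the Siu--Beauville--Catanese theorem, reinterpret the Euler class as a transgression in the Hochschild--Serre spectral sequence, and then use Deligne's $E_2$-degeneration theorem to force that transgression to vanish rationally.

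First I would realize $h$ by a holomorphic fibration. Write $G = \pi_1(X)$ for a compact K\"ahler manifold $X$. By the Siu--Beauville--Catanese theorem, a surjection from a K\"ahler fundamental group onto a surface group of genus $\geq 2$ is induced, after replacing $X$ by a finite \'etale cover if necessary, by a surjective holomorphic map $f: X \longrightarrow S$. The hypothesis that $g$ is \emph{maximal} among such targets rules out any nontrivial Stein factorization, so $f$ may be assumed to have connected fibers (this is exactly the situation of Remark \ref{sur}).

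Next I would translate $e(h)$ into spectral-sequence language. The extension
\[
1 \longrightarrow K/[K,K] \longrightarrow G/[K,K] \longrightarrow G_1 \longrightarrow 1
\]
is classified by $e(h) \in H^2(G_1, K/[K,K])$. Since $K/[K,K]$ is a finitely generated abelian group with a $G_1$-action, $e(h)$ is torsion iff $e(h) \otimes \mathbb{Q} = 0$, which by duality happens iff for every $G_1$-invariant character $\phi \in \operatorname{Hom}(K/[K,K], \mathbb{Q}) = H^1(K, \mathbb{Q})^{G_1}$ the pushforward $\phi_\ast(e(h)) \in H^2(G_1, \mathbb{Q})$ vanishes. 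Standard extension theory identifies $\phi_\ast(e(h))$, up to sign, with the transgression $d_2(\phi)$ of the Hochschild--Serre spectral sequence
\[
E_2^{p,q} = H^p(G_1, H^q(K, \mathbb{Q})) \Longrightarrow H^{p+q}(G, \mathbb{Q}),
\]
so the task reduces to showing $d_2|_{E_2^{0,1}} = 0$ rationally.

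Finally I would invoke Deligne. The Leray spectral sequence of $f$ has $E_2^{p,q} = H^p(S, R^q f_\ast \mathbb{Q})$ and, by Deligne's theorem for smooth projective (and K\"ahler) morphisms, degenerates at $E_2$; in particular its $d_2: H^0(S, R^1 f_\ast \mathbb{Q}) \to H^2(S, \mathbb{Q})$ vanishes. Away from the critical locus $\Delta \subset S$, $f$ is a smooth fibration, $R^1 f_\ast \mathbb{Q}$ is the local system of $H^1$ of the fiber, and Deligne's invariant cycle theorem identifies $H^0(S, R^1 f_\ast \mathbb{Q})$ with $H^1(F, \mathbb{Q})^{\pi_1(S \setminus \Delta)}$; this in turn matches $H^1(K, \mathbb{Q})^{G_1}$ via the natural map $\pi_1(F) \to K$ (whose image rationally generates $K^{\mathrm{ab}}$ because $f$ has connected fibers). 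A naturality diagram between the Leray spectral sequence of $f$ and the Hochschild--Serre spectral sequence of $1 \to K \to G \to G_1 \to 1$, induced by the classifying maps $X \to BG$ and $S \to BG_1$, transports the vanishing of the Leray $d_2$ to the Hochschild--Serre $d_2$, yielding $\phi_\ast(e(h)) = 0$ for all $\phi$ and hence torsionness of $e(h)$. The main obstacle is precisely this comparison: $f$ is not a topological fibration when it has singular fibers, so passing from $H^0(S, R^1 f_\ast \mathbb{Q})$ to $H^1(K, \mathbb{Q})^{G_1}$ requires Hodge-theoretic input (purity of the mixed Hodge structure on the monodromy invariants, together with the fact that $S$ is a $K(\pi, 1)$ to control the Leray filtration on $H^2(X, \mathbb{Q})$).
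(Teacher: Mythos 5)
The paper offers no proof of this statement to compare yours against: Theorem \ref{torsion} is quoted verbatim as Corollary 5.5 of Arapura's preprint \cite{arapura-hom} and used as a black box. Judged on its own terms, your outline has the right skeleton (Siu--Beauville realization of $h$, reinterpretation of $\phi_\ast(e(h))$ as the Hochschild--Serre transgression, duality to reduce torsionness to the vanishing of these pushforwards), and it can be completed, but two steps are soft. First, the hedge ``after replacing $X$ by a finite \'etale cover'' is both unnecessary and dangerous: for genus $\geq 2$ targets the Siu--Beauville factorization is produced on $X$ itself, and passing to a finite cover would replace $G$ by a finite-index subgroup and destroy the maximality hypothesis (finite-index subgroups of $G_1$ are surface groups of strictly larger genus). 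What maximality actually buys is that in the factorization $G \to \pi_1(C) \to G_1$ through the connected-fiber fibration $f: X \to C$, the curve $C$ has genus exactly $g$, so the surjection $\pi_1(C) \to G_1$ is an isomorphism by Hopficity and $h$ is identified with $f_\ast$. Second, Deligne's $E_2$-degeneration theorem is a statement about \emph{smooth} proper morphisms, and $f$ will in general have singular fibers, so it cannot be invoked literally; the full Leray degeneration for such $f$ is a much more delicate matter than you suggest.

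Both of these, and the ``main obstacle'' you flag at the end, dissolve once you observe that only one differential is needed and that it sits in the five-term exact sequence: for the Hochschild--Serre spectral sequence of $1 \to K \to G \to G_1 \to 1$ with $\mathbb{Q}$-coefficients,
$$0 \longrightarrow H^1(G_1,\mathbb{Q}) \longrightarrow H^1(G,\mathbb{Q}) \longrightarrow H^1(K,\mathbb{Q})^{G_1} \stackrel{d_2}{\longrightarrow} H^2(G_1,\mathbb{Q}) \stackrel{h^\ast}{\longrightarrow} H^2(G,\mathbb{Q})\, ,$$
so $d_2 = 0$ (equivalently $\phi_\ast(e(h)) = 0$ for every invariant $\phi$, which with your duality step gives torsionness) as soon as $h^\ast$ is injective on $H^2(G_1,\mathbb{Q}) \cong \mathbb{Q}$. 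Composing $h^\ast$ with the injection $H^2(G,\mathbb{Q}) \hookrightarrow H^2(X,\mathbb{Q})$ induced by the classifying map identifies it with $f^\ast : H^2(S,\mathbb{Q}) \to H^2(X,\mathbb{Q})$, which is nonzero because $\int_X f^\ast\omega_S \wedge \omega_X^{n-1} > 0$ for K\"ahler forms $\omega_S$, $\omega_X$. This replaces the entire Leray/Deligne/invariant-cycle apparatus by one integral; in particular you never need to compare $H^0(S, R^1 f_\ast \mathbb{Q})$ with $H^1(K,\mathbb{Q})^{G_1}$ (and even if you insist on routing through the Leray sequence of $f$, the comparison morphism of spectral sequences runs from Hochschild--Serre to Leray and is an isomorphism on $E_2^{2,0}$, so the identification of the $E_2^{0,1}$ terms is never required).
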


For a 3-manifold $M$ which is a twisted (non-zero Euler class) circle bundle over a closed hyperbolic orbifold,  some
 cover is  a twisted  circle bundle over a closed surface $S$ of  genus greater than one. By abusing notation slightly
we call this cover $M$ and its fundamental group $Q$. Let $G_1=\pi_1(S)$. If
$$1 \,\longrightarrow\, N_0 \,(= \ints )\,  \longrightarrow \,Q\,
\stackrel{\phi}{\longrightarrow} \,G_1\, \longrightarrow \,1$$ is the
associated exact sequence, then
$e(\phi )$ is non-torsion.  We have  the following.

\begin{prop} Let $$ 1 \longrightarrow N \stackrel{i}{\longrightarrow} G
\stackrel{q}{\longrightarrow} Q \longrightarrow 1$$  be an exact
sequence of
groups such that
\begin{enumerate}
\item[(a)] $Q = \pi_1(M)$ is the fundamental group of a Seifert-fibered
closed 3-manifold $M$ with hyperbolic base-orbifold,
\item[(b)] $G$ is a K\"ahler  group, and
\item[(c)] $N$ is finitely presented.
\end{enumerate}
Then a finite-sheeted cover of $M$ is a product $\Sigma \times S^1$,
where $\Sigma$ is a closed oriented surface of genus greater than one.
\label{sfskahler} \end{prop}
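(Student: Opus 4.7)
The plan is a proof by contradiction using Arapura's Theorem~\ref{torsion}. First I would reduce to the case where $M$ is a principal $S^1$-bundle $\pi \colon M \to \Sigma$ over a closed oriented surface $\Sigma$ of genus $g > 1$ by passing to a finite cover of $M$, using the standard theory of Seifert fibrations with hyperbolic base orbifold recalled in the previous subsection. Writing $e \in H^2(\Sigma, \Z) \cong \Z$ for the Euler number of this circle bundle, the goal reduces to showing $e = 0$, for then the bundle is trivial and the cover is diffeomorphic to $\Sigma \times S^1$.

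The second step is to relate two group extensions. The Seifert fibration produces a central extension
$$1 \longrightarrow \Z \longrightarrow Q \stackrel{\phi}{\longrightarrow} \pi_1(\Sigma) \longrightarrow 1$$
classified by $e$. Composing with $q$ gives a surjective homomorphism $h := \phi \circ q \colon G \to \pi_1(\Sigma)$ whose kernel $K := q^{-1}(\ker\phi)$ contains $N$ with quotient $K/N \cong \Z$. The projection $K \twoheadrightarrow \Z$ descends to a homomorphism on abelianizations $K/[K,K] \twoheadrightarrow \Z$, and functoriality of the Euler class of a group extension ensures that the induced coefficient-change map
$$H^2(\pi_1(\Sigma),\, K/[K,K]) \longrightarrow H^2(\pi_1(\Sigma),\, \Z)$$
sends $e(h)$ to $e(\phi) = e$. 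Hence if $e(h)$ is torsion, then so is $e$ inside $\Z$, forcing $e = 0$.

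To conclude that $e(h)$ is torsion I would invoke Arapura's Theorem~\ref{torsion}. Its hypothesis requires $g$ to be maximal among genera of surface groups onto which $G$ admits a surjection. Since $G$ is K\"ahler, any such surjection to a genus $>1$ surface group arises (up to automorphism) from a holomorphic irrational pencil over a hyperbolic curve, and by the classical Castelnuovo--de Franchis / Siu theory there are only finitely many such pencils. So a maximum genus $g_{\max} \geq g$ is attained; I would pass to a further finite cover of $M$ realizing a surjection $h_{\max} \colon G \to \pi_1(\Sigma_{\max})$ onto the maximal surface group, which is compatible with refining the circle-bundle structure by pullback, and then apply Theorem~\ref{torsion} to see $e(h_{\max})$ is torsion. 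Pulling this back to the Euler number of the refined circle bundle (which is a nonzero multiple of the original $e$ by multiplicativity of the Euler number under covering maps of the base), one concludes that $e = 0$.

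The main obstacle is the bookkeeping of the finite covers: we must arrange that the cover realizing the circle-bundle structure over $\Sigma$ and the cover achieving the Arapura-maximality condition can be taken simultaneously, and we must verify that pulling the principal $S^1$-bundle back along a finite cover of $\Sigma$ multiplies its Euler number by the degree of the cover (so that vanishing of the pulled-back Euler class indeed forces $e = 0$). Once this compatibility is checked, the proof is a direct application of Theorem~\ref{torsion} to the composite $\phi \circ q$.
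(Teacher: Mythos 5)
Your proposal is essentially the paper's own proof: pass to a finite cover so that $M$ is a circle bundle over a closed surface $S$ of genus $>1$, form the composite surjection $\phi\circ q: G \to \pi_1(S)$, and apply Arapura's Theorem~\ref{torsion} together with functoriality of the Euler class under the coefficient map $K/[K,K]\to \Z$ to conclude $e(\phi)=0$. You are in fact more explicit than the paper on the two points it glosses over --- the coefficient-change argument showing that torsionness of $e(\phi\circ q)$ forces $e(\phi)=0$, and the maximal-genus hypothesis in Theorem~\ref{torsion} --- though your device for arranging maximality (a further cover of $M$ ``compatible with refining the circle-bundle structure'') would need care, since the maximal-genus pencil of $G$ need not be related to the one induced by the Seifert fibration.
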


\begin{proof}
If $G$ is K\"ahler, any finite index subgroup of $G$ is K\"ahler.  Let $G= \pi_1(X)$, where
$X$ is a K\"ahler manifold. By passing to a finite-sheeted cover of $X$ if necessary,
we may assume that $M$ is a circle
bundle over a closed surface $S$ of  genus greater than one. Abusing notation slightly again, call this cover $X$.
We have exact sequences
$$1 \,\longrightarrow \,N \,\stackrel{i}{\longrightarrow}\, \pi_1(X)\,
\stackrel{q}{\longrightarrow} \,Q \,(=\pi_1(M))\,\longrightarrow\, 1
$$
and
$$ 1 \,\longrightarrow \,N_0\, (= \ints )\,  \longrightarrow \,Q\,
\stackrel{\phi}{\longrightarrow} \,G_1 \,(=\pi_1(S))\, \longrightarrow
\, 1\, .
$$

This gives rise to a surjection $\phi \circ q: \pi_1(X) \longrightarrow
G_1 (=\pi_1(S))$. If $e(\phi )$ is non-zero, it follows that
$e(\phi \circ q )$ is non-torsion. Therefore $e(\phi )=0$
by Theorem \ref{torsion}.
\end{proof}

Hence we have the following, which is one of the main theorems of our paper.

\begin{theorem}
Let $$ 1 \longrightarrow N \stackrel{i}{\longrightarrow} G
\stackrel{q}{\longrightarrow} Q \longrightarrow 1$$  be an exact
sequence of
groups such that
\begin{enumerate}
\item[(a)] $Q = \pi_1(M)$ is the fundamental group of a  closed
3-manifold $M$ such that $Q$ is infinite and not virtually cyclic,
\item[(b)]  $G$ is a K\"ahler  group, and
\item[(c)]  $N$ is finitely presented.
\end{enumerate}
Then there exists a finite index subgroup $Q^\prime$ of $Q$ such that
either
$Q^\prime$ is a finite index subgroup of the $3$-dimensional
Heisenberg group or
 $Q^\prime = \pi_1(\Sigma) \times S^1$, where $\Sigma$ is a closed
oriented surface of positive genus.
\label{main1} \end{theorem}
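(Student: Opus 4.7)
The plan is to assemble Theorem~\ref{main1} from the four case analyses already carried out in this subsection, supplemented by a small piece of geometric bookkeeping at the end.

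First I would cite the four cases analyzed above. Since $G$ is K\"ahler and $N$ is finitely presented (hence finitely generated), the paragraph labeled ``$M$ admits a non-trivial prime decomposition'' rules that case out via Proposition~\ref{ppp} and Lemma~\ref{plm}; the paragraph labeled ``$M$ is prime and admits a non-trivial torus decomposition'' rules it out via Proposition~\ref{remark-torus1} together with Corollary~\ref{pppcor} and Lemma~\ref{plm}; Theorem~\ref{brudnyi} rules out the $Sol$ case, since the solvable quotient $\pi_1(M)$ fails to be virtually nilpotent; and Theorem~\ref{dp} (through Corollary~\ref{dpcor}) rules out the closed hyperbolic case, as a closed hyperbolic $3$-manifold group is not a virtual surface group. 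Spherical and $S^2\times \E^1$ geometries are excluded by the hypothesis that $Q$ is infinite and not virtually cyclic. By the Geometrization Theorem the only remaining possibility is that $M$ is Seifert-fibered with base orbifold either hyperbolic or Euclidean.

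In the hyperbolic base orbifold case I would apply Proposition~\ref{sfskahler} directly: after passing to a finite cover of $M$, the total space is a product $\Sigma \times S^1$ with $\Sigma$ a closed oriented surface of genus greater than one, producing the second alternative in the statement with $Q^\prime = \pi_1(\Sigma\times S^1)$.

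The remaining Euclidean base orbifold case splits according to whether the rational Euler number of the Seifert fibration vanishes. If it vanishes, $M$ carries $\E^3$ geometry and a finite cover is the $3$-torus $T^3 = T^2\times S^1$, so a finite-index subgroup $Q^\prime$ of $Q$ equals $\pi_1(T^2\times S^1)$, falling under the second alternative with $\Sigma = T^2$. If instead the Euler number is nonzero, $M$ carries $Nil$ geometry and a finite cover is a compact quotient of the three-dimensional real Heisenberg group by a cocompact lattice; every such lattice contains a finite-index subgroup isomorphic to the integer Heisenberg group, yielding the first alternative. The substantive obstacles have all been absorbed into the earlier subsections (most notably the torus-decomposition step, handled via the coboundedness/finite-height arguments culminating in Proposition~\ref{remark-torus1}); the final geometric dichotomy for the Euclidean base case is the only routine piece that remains to be invoked.
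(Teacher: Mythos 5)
Your proposal is correct and follows essentially the same route as the paper: it assembles the theorem from the preceding case analyses (prime decomposition, torus decomposition, $Sol$, and hyperbolic cases all excluded), reduces to the Seifert-fibered case with hyperbolic or Euclidean base orbifold, invokes Proposition~\ref{sfskahler} for the hyperbolic base, and handles the Euclidean base via the torus-versus-Heisenberg dichotomy. Your treatment of the Euclidean case (splitting on the Euler number into $\E^3$ and $Nil$ geometries) merely spells out what the paper states in one sentence, so there is no substantive difference.
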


\begin{proof}
Proposition \ref{sfskahler} deals with the case that the base orbifold is hyperbolic. If the
base orbifold is Euclidean, then any twisted bundle over the torus has fundamental group isomorphic to
(a finite index subgroup of) the $3$-dimensional Heisenberg group.
\end{proof}

If $N$ is trivial and $G$ is a K\"ahler group, then the exact sequence
$(\ast )$ implies that $Q$ is
a K\"ahler group. But $b_1(\pi_1(\Sigma \times S^1))$ is odd for
any closed oriented surface $\Sigma$, hence $\pi_1(\Sigma \times S^1)$
is not K\"ahler. Also, no finite index subgroup
of the $3$-dimensional Heisenberg group can be K\"ahler (Example 3.31 in
\cite[p. 40]{abckt}). Hence, in view of Theorem \ref{main1}, we have a
new proof of the following theorem of Dimca-Suciu
(conjectured originally by Donaldson and Goldman).

\begin{theorem}[\cite{ds}, \cite{kotschick}]\label{dsk}
Let $Q$ be the fundamental group of a closed 3-manifold. Suppose
further  that $Q$ is infinite and not virtually cyclic.
Then $Q$ cannot  be a K\"ahler group. \end{theorem}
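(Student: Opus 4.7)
The plan is to derive Theorem~\ref{dsk} as a direct corollary of Theorem~\ref{main1} by specializing the general setup to the case where the normal subgroup $N$ is trivial. Suppose, for contradiction, that $Q$ is a K\"ahler group satisfying the hypotheses. Then we may take $G = Q$ and consider the trivial exact sequence
$$1 \longrightarrow \{1\} \longrightarrow Q \longrightarrow Q \longrightarrow 1.$$
The trivial group is finitely presented, so all hypotheses of Theorem~\ref{main1} are satisfied. We therefore obtain a finite index subgroup $Q'$ of $Q$ that is either (a) a finite index subgroup of the $3$-dimensional Heisenberg group, or (b) of the form $\pi_1(\Sigma \times S^1)$, where $\Sigma$ is a closed oriented surface of positive genus.

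Next I would invoke the fact that the class of K\"ahler groups is closed under passage to finite index subgroups: a finite cover of a compact K\"ahler manifold inherits a K\"ahler structure, so any finite index subgroup of $\pi_1(X)$ is the fundamental group of some compact K\"ahler manifold. In particular $Q'$ must be K\"ahler, and it suffices to rule out each of the two possibilities above.

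For case (b), the first Betti number of $\pi_1(\Sigma \times S^1)$ is $b_1 = 2g + 1$, where $g \geq 1$ is the genus of $\Sigma$; this is odd. Since a K\"ahler group must have even first Betti number (a standard consequence of Hodge theory on a compact K\"ahler manifold, as $H^1$ decomposes as $H^{1,0} \oplus H^{0,1}$ with complex-conjugate summands of equal dimension), case (b) is impossible. For case (a), I would cite the known obstruction, recorded as Example~3.31 in \cite{abckt}, that no finite index subgroup of the $3$-dimensional Heisenberg group can be K\"ahler; this obstruction can be traced to the failure of formality for a nilmanifold modeled on the Heisenberg Lie algebra (the relevant triple Massey product being non-trivial), whereas compact K\"ahler manifolds are formal by Deligne--Griffiths--Morgan--Sullivan.

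The main obstacle is essentially absent, since the full technical content has already been absorbed into Theorem~\ref{main1}; the only work remaining is the two elementary exclusions above, each of which relies on a standard and well-documented obstruction to K\"ahlerness (Betti number parity in case (b), formality in case (a)).
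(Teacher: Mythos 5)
Your proposal is correct and follows essentially the same route as the paper: set $N=\{1\}$ in Theorem~\ref{main1}, observe that the finite index subgroup $Q'$ would have to be K\"ahler, and then exclude $\pi_1(\Sigma\times S^1)$ by odd first Betti number and the Heisenberg case by Example~3.31 of \cite{abckt}. The only (harmless) addition is your explanation of the formality obstruction behind the Heisenberg exclusion, which the paper leaves to the cited reference.
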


\section{Complex Surfaces}

\subsection{Restrictions for Complex Non-K\"ahler Surfaces}
We start with the following theorem  due to Kodaira.

\begin{theorem}[See Theorem 1.28, Corollary 1.29 of \cite{abckt})]
Let $\Gamma$ be a finitely presented group.
Then $\Gamma$ is the fundamental group of a K\"ahler surface if and
only if it is the fundamental group of a  compact complex  surface with
even first Betti number. \label{kodaira-Kahler} \end{theorem}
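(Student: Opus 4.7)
The theorem has two directions, of very unequal difficulty.

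The forward implication is immediate from Hodge theory. A K\"ahler surface is by definition a compact complex surface, so it remains only to verify that its first Betti number is even. On any compact K\"ahler manifold $X$, the Hodge decomposition yields $H^1(X,\mathbb{C}) = H^{1,0}(X) \oplus H^{0,1}(X)$ with $H^{0,1}(X) = \overline{H^{1,0}(X)}$, whence $b_1(X) = 2h^{1,0}(X)$ is even. So if $\Gamma = \pi_1(Y)$ for a K\"ahler surface $Y$, then $Y$ is already a compact complex surface with even first Betti number.

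The substance lies in the converse. The plan is to show that any compact complex surface $X$ with $b_1(X)$ even is itself K\"ahler, so one may take $Y = X$ as the required K\"ahler surface with $\pi_1(Y) = \pi_1(X)$. This is the classical ``even $b_1$ implies K\"ahler'' theorem for surfaces. I would follow Kodaira's original classification-based route: invoke the Enriques--Kodaira classification to stratify compact complex surfaces by Kodaira dimension $\kappa \in \{-\infty, 0, 1, 2\}$, subdivide each stratum into the standard classes (rational or ruled, tori, K3, Enriques, bielliptic, properly elliptic, general type, Class VII), and in each class where $b_1$ can be even verify K\"ahlerness. The algebraic classes (projective rational/ruled, Enriques, bielliptic, surfaces of general type) are handled by Kodaira's embedding theorem; complex tori are K\"ahler tautologically; Class VII surfaces have $b_1 = 1$ and are excluded by hypothesis.

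The hard part is the transcendental cases. For K3 surfaces one cannot rely on classification alone and must invoke Siu's theorem that every K3 surface admits a K\"ahler metric; for non-algebraic properly elliptic surfaces with even $b_1$, Kodaira's own analysis of the global monodromy of elliptic fibrations produces a K\"ahler form. An alternative route, which avoids case analysis entirely, is the later analytic approach of Buchdahl and Lamari: they use $\partial\bar\partial$-cohomology and a continuity/perturbation argument on positive $(1,1)$-currents to deduce directly that even $b_1$ forces the existence of a K\"ahler metric. For the present paper the result can simply be cited from \cite{abckt}, since only the qualitative identification of fundamental groups is needed.
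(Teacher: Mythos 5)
The paper offers no proof of this statement---it is quoted verbatim as a known result of Kodaira from Theorem 1.28 and Corollary 1.29 of \cite{abckt}---so there is nothing internal to compare against; your reconstruction (Hodge symmetry for the easy direction, and the classical ``even $b_1$ implies K\"ahler'' theorem via the Enriques--Kodaira classification together with Siu's result on K3 surfaces, or alternatively the classification-free Buchdahl--Lamari argument) is exactly the standard proof underlying that citation and is correct.
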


We collect together known
restrictions on fundamental groups of compact non-K\"ahler complex surfaces that we will need.

The following Theorem is due to Kodaira; see Theorems 1.27, 1.38 of
\cite{abckt}, \cite[pp. 244--245]{bhpv}, \cite[Ch.~2]{fm}):

\begin{theorem}
Let $X$ be a compact complex  surface with odd first Betti number. Then
either $X$ is elliptic or of Class VII, i.e.,
$b_1 (X)=1$ and the Kodaira dimension of $X$ is negative.

If $X$ is minimal of Class VII, and admits non-constant
meromorphic functions, then $X$ is diffeomorphic to $S^3 \times S^1$.

If $X$ is a minimal elliptic surface with odd first Betti number, then
the Euler characteristic $\chi (X) = 0$ and the universal
cover $\widetilde{X}$ is diffeomorphic to $S^3 \times {\mathbb{R}}$
or ${\mathbb{R}}^4$.
 \label{kodaira-cx}
\end{theorem}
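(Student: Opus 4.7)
The statement is essentially a repackaging of the Enriques--Kodaira classification, so I would organize the argument by Kodaira dimension $\kappa \in \{-\infty, 0, 1, 2\}$. The key external input is the theorem (Siu, Miyaoka) that a compact complex surface is K\"ahler if and only if $b_1$ is even; so $b_1(X)$ odd forces $X$ non-K\"ahler, and $X$ must appear in the non-K\"ahler strata of the classification. Scanning the minimal non-K\"ahler entries of the list, one finds exactly: Class VII surfaces ($\kappa = -\infty$, $b_1=1$); primary and secondary Kodaira surfaces ($\kappa = 0$, which are elliptic); and non-K\"ahler properly elliptic surfaces ($\kappa=1$)---surfaces of general type and rational or ruled surfaces are K\"ahler and drop out. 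Since the last two non-K\"ahler families are elliptic, the first assertion follows.

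For the Class VII statement, I would invoke Kodaira's classification of minimal Class VII surfaces with algebraic dimension $a(X)=1$ (equivalently, admitting a non-constant meromorphic function): these are exactly the Hopf surfaces, namely quotients $(\C^2\ssm\{0\})/\Gamma$ where $\Gamma$ is a discrete group of holomorphic contractions. Since $\C^2\ssm\{0\}$ deformation retracts onto $S^3$ and in fact is diffeomorphic to $S^3\times\R$, and $\Gamma$ acts freely, properly discontinuously, and cocompactly with a finite-index $\Z$-subgroup generated by a contracting automorphism, the underlying real four-manifold is diffeomorphic to $S^3\times S^1$.

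For the elliptic statement, every minimal elliptic surface satisfies $c_1(X)^2=0$, so Noether's formula reduces to $c_2(X) = 12\chi(\OO_X)$. The non-K\"ahler Hodge identity $h^{0,1} = h^{1,0}+1$ (so that $b_1$ is odd) together with a fiberwise Euler-characteristic calculation along the elliptic fibration---or, equivalently, direct inspection of each non-K\"ahler elliptic entry (Hopf-type elliptic, primary and secondary Kodaira, and non-K\"ahler properly elliptic)---yields $\chi(\OO_X) = 0$, hence $\chi_{\mathrm{top}}(X) = c_2(X) = 0$. The universal cover is then read off from the pulled-back elliptic fibration on $\widetilde X$: for Kodaira surfaces one gets $\widetilde X \cong \R^4$ (the fibration is a principal $\C$-bundle over $\C$), while for Hopf-type elliptic cases one gets $\widetilde X \cong \C^2\ssm\{0\} \cong S^3\times\R$.

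The main obstacle---since this is a quoted result of Kodaira rather than a new theorem---is not a logical one but the need to faithfully traverse the classification list and identify precisely which strata have odd $b_1$. The genuine inputs used are Siu's parity theorem, Kodaira's identification of $a(X)=1$ Class VII surfaces with Hopf surfaces, and the explicit description of the universal covers of the non-K\"ahler minimal elliptic models, all of which are available in \cite{bhpv} and \cite{abckt}.
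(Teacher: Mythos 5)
The paper does not actually prove this statement: Theorem \ref{kodaira-cx} is quoted as a result of Kodaira, with pointers to Theorems 1.27 and 1.38 of \cite{abckt}, to \cite[pp.~244--245]{bhpv} and to \cite[Ch.~2]{fm}, so there is no internal argument to compare yours against. Your reconstruction follows the standard route through the Enriques--Kodaira classification (parity of $b_1$ detecting K\"ahlerness, then reading off the non-K\"ahler strata), which is exactly what the cited sources do, and the first and third assertions are handled correctly: the only minimal non-K\"ahler classes are Class VII, the Kodaira surfaces, and the non-K\"ahler properly elliptic surfaces, the latter two being elliptic (and blowing up changes neither $b_1$ nor these properties, so the non-minimal case follows); and $c_1^2=0$ together with $\chi(\mathcal{O}_X)=0$ gives $\chi_{\rm top}(X)=c_2(X)=0$, with the universal covers $\C^2\setminus\{0\}\cong S^3\times\R$ and $\R^4$ read off from the explicit models.

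The one step where your argument as written does not deliver the stated conclusion is the Class VII case. Kodaira's theorem identifies a minimal Class VII surface with a non-constant meromorphic function as a Hopf surface $(\C^2\setminus\{0\})/\Gamma$, but your own description of $\Gamma$ --- a group containing a contraction-generated copy of $\Z$ as a subgroup of \emph{finite index} --- only yields that $X$ is \emph{finitely covered} by $S^3\times S^1$. To conclude that $X$ itself is diffeomorphic to $S^3\times S^1$ you need $\Gamma\cong\Z$, i.e.\ a primary Hopf surface; a secondary elliptic Hopf surface, such as a free $\Z/k$-quotient of $(\C^2\setminus\{0\})/\langle\lambda\cdot{\rm id}\rangle$, still admits non-constant meromorphic functions but has fundamental group $\Z\times\Z/k$, so it is not diffeomorphic to $S^3\times S^1$. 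This imprecision is inherited from the way the statement is often quoted, and it is harmless for the paper's application (which only uses that Hopf surfaces have virtually cyclic fundamental group, so that they are excluded by the hypothesis that $Q$ is not virtually cyclic), but a careful write-up should either restrict to primary Hopf surfaces or weaken ``diffeomorphic to'' to ``finitely covered by'' $S^3\times S^1$.
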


\begin{corollary}[Corollary 1.41 of \cite{abckt}] Let $G$ be a group
with
$b_1(G) = 2m+1$ with $m\geq 1$ such that $G$ is the fundamental group
of a complex surface $X$, Then $X$ is elliptic. If the elliptic fibration
$X$ is nonsingular, then $X$ is a $K(G,1)$ space. \label{ell}
\end{corollary}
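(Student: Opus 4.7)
The plan is to deduce both assertions from Kodaira's theorem (Theorem \ref{kodaira-cx}). Since $G=\pi_1(X)$, we have $b_1(X)=b_1(G)=2m+1\geq 3$. By Theorem \ref{kodaira-cx}, any compact complex surface with odd first Betti number is either elliptic or of Class VII; however, Class VII surfaces have $b_1=1$, which is incompatible with $b_1(X)\geq 3$. Hence $X$ is elliptic, which settles the first assertion.

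For the asphericity statement, I would assume that the elliptic fibration $\pi\colon X\to B$ has no singular fibers, so that topologically $\pi$ is a smooth locally trivial $T^2$-bundle over a compact Riemann surface $B$. The long exact sequence of homotopy groups of this fibration gives
$$\pi_2(B)\longrightarrow \pi_1(T^2)\longrightarrow \pi_1(X)\longrightarrow \pi_1(B)\longrightarrow 1.$$
If $B\cong\mathbb{P}^1$, then $\pi_1(X)$ would be a quotient of $\pi_1(T^2)=\Z^2$, forcing $b_1(X)\leq 2$, contradicting $b_1(X)\geq 3$. Thus $B$ has positive genus; its universal cover $\widetilde{B}$ is diffeomorphic to $\R^2$ and $\pi_2(B)=0$, so $\pi_1(T^2)\hookrightarrow \pi_1(X)$.

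Next I would pull back $\pi$ along $\widetilde{B}\to B$. The resulting smooth $T^2$-bundle over the contractible paracompact base $\widetilde{B}\cong\R^2$ is trivializable, so its total space is diffeomorphic to $\R^2\times T^2$. On the other hand this total space is precisely the covering of $X$ corresponding to the subgroup $\pi_1(T^2)\subset \pi_1(X)$. Passing to universal covers once more yields $\widetilde{X}\cong \R^2\times\R^2=\R^4$, which is contractible, so $X$ is a $K(G,1)$.

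The only point that requires care is the precise meaning of ``nonsingular elliptic fibration'': under the standard reading (no singular fibers, hence no multiple fibers either, giving a genuine smooth $T^2$-bundle in the topological sense), the pullback-and-trivialize step is routine. Matched against Kodaira's dichotomy $\widetilde{X}\simeq S^3\times\R$ or $\R^4$ for minimal elliptic surfaces with odd $b_1$, the hypothesis $b_1(X)\geq 3$ together with the $T^2$-bundle structure is exactly what is needed to rule out the $S^3\times\R$ case and select $\widetilde{X}\cong\R^4$.
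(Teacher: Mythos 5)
Your argument is essentially correct, but note that the paper offers no proof of this statement at all: it is imported verbatim as Corollary~1.41 of \cite{abckt}, with Theorem~\ref{kodaira-cx} quoted just above as the underlying input. So there is nothing in the paper to compare against except that citation. On its own merits, your first step is fine: $b_1(X)=2m+1\ge 3$ excludes Class VII (which has $b_1=1$) in Kodaira's dichotomy, so $X$ is elliptic. Your second step (homotopy exact sequence to rule out base $\mathbb{P}^1$ and to get $\Z^2\hookrightarrow\pi_1(X)$, then pulling the $T^2$-bundle back over $\widetilde{B}\cong\R^2$ and trivializing to get $\widetilde{X}\cong\R^4$) is a clean, self-contained derivation of the asphericity, and it correctly recovers the $\R^4$ branch of Kodaira's $S^3\times\R$ versus $\R^4$ alternative without invoking it.

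The one caveat concerns your reading of ``nonsingular.'' Later in the paper (Section on elliptic fibrations) the authors explicitly write ``the elliptic fibration must be non-singular (with possibly multiple fibers),'' and they still appeal to Corollary~\ref{ell} to conclude that $X$ is a $K(G,1)$. Under that reading your fibration is only a Seifert-type fibration over a base orbifold, not a locally trivial $T^2$-bundle, and the homotopy exact sequence and trivialization step do not apply verbatim. The gap is easily closed: since $b_1(X)=2g(B)+1\ge 3$ forces the underlying base curve to have positive genus, the base orbifold is good, so a finite orbifold cover of $B$ unramified in the orbifold sense pulls the fibration back to a genuine $T^2$-bundle over a positive-genus surface; your argument then shows that finite cover of $X$ is aspherical, and asphericity descends to $X$ since $\pi_n(X)\cong\pi_n(X')$ for $n\ge 2$. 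You should add this reduction if the statement is to be used in the form the paper actually needs.
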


The next few statements impose restrictions on fundamental groups of class VII complex surfaces.

\begin{lemma}[Lemma 1.45 of \cite{abckt}]
If $X$ is a minimal complex surface of Class VII, then any finite-sheeted cover of $X$ is of class VII.
Further the intersection form on $H_2(X, \mathbb{Z})/{\rm Tor}$ is
negative definite.  \label{vii} \end{lemma}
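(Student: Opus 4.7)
The statement has two parts, and I will treat them separately. For the first assertion, let $\pi:\widetilde X\to X$ be a finite-sheeted (hence unramified) cover of the minimal Class VII surface $X$; after passing to the Galois closure, I may assume $\pi$ is Galois with deck group $G$. The plan is to verify that $\widetilde X$ still satisfies both $b_1=1$ and $\kappa=-\infty$, which together are the defining conditions of Class VII.

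Since $\pi$ is \'etale, one has $K_{\widetilde X}=\pi^*K_X$. If $s\in H^0(\widetilde X,mK_{\widetilde X})$ were non-zero, then $\prod_{g\in G}g^*s$ would be a non-zero $G$-invariant pluri-canonical section descending to $X$, contradicting $\kappa(X)=-\infty$; so $\kappa(\widetilde X)=-\infty$. For the Betti number, I will invoke the classical theorem that a compact complex surface is K\"ahler if and only if $b_1$ is even. Class VII forces $X$ non-K\"ahler, so if $\widetilde X$ carried a K\"ahler form $\omega$ the averaged form $\sum_{g\in G}g^*\omega$ would descend to a K\"ahler form on $X$, a contradiction. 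Hence $b_1(\widetilde X)$ is odd. Applying the Enriques-Kodaira classification of surfaces with $\kappa=-\infty$ (rational, ruled, or Class VII, as used in Theorem \ref{kodaira-cx}), and noting that rational and ruled surfaces are K\"ahler, $\widetilde X$ must be Class VII; in particular $b_1(\widetilde X)=1$.

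For the intersection form, the plan is a short computation from Noether's formula and the Hirzebruch signature theorem, using only the invariants $b_1(X)=1$, $p_g(X)=0$ (forced by $\kappa=-\infty$ together with minimality), and $q(X)=1$ (from the identity $b_1=2q-1$ valid for non-K\"ahler complex surfaces). These give $\chi(\OO_X)=1-q+p_g=0$, hence $c_1^2=-c_2$ by Noether, while $c_2=\chi_{\mathrm{top}}(X)=2-2b_1(X)+b_2(X)=b_2(X)$. Plugging into $\sigma=(c_1^2-2c_2)/3$ yields $\sigma=-b_2(X)$. Combined with $b_+ + b_- = b_2(X)$ this forces $b_+=0$, so the intersection form is negative semi-definite on $H_2(X,\Z)/\mathrm{Tor}$. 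Poincar\'e duality gives non-degeneracy on $H_2/\mathrm{Tor}$, upgrading semi-definite to definite.

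The main obstacle I anticipate is in the first half: the argument crucially leans on two non-trivial inputs from surface theory, namely the Miyaoka-Siu characterization of K\"ahlerness by parity of $b_1$, and the Enriques-Kodaira classification of $\kappa=-\infty$ surfaces which rules out the \emph{a priori} possibility that $\widetilde X$ is an elliptic surface with odd $b_1\geq 3$. Granting these, the averaging-and-descent step is formal. The signature computation in the second half is entirely routine once $p_g=0$ and $q=1$ are known.
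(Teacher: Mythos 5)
The paper gives no proof of this lemma---it is quoted verbatim from Lemma 1.45 of \cite{abckt}---so there is nothing internal to compare against; your argument is correct and is essentially the standard one from the literature. Both halves check: the norm/averaging argument kills pluricanonical sections and K\"ahler metrics on the cover, parity of $b_1$ together with the Enriques--Kodaira classification of $\kappa=-\infty$ surfaces forces the cover to be Class VII, and the computation $\chi(\mathcal{O}_X)=1-q+p_g=0$, $c_1^2=-c_2=-b_2$, $\sigma=-b_2$ gives $b^+=0$, whence negative definiteness by unimodularity of the intersection pairing on $H_2/\mathrm{Tor}$. Two small points you should make explicit: the finite cover $\widetilde X$ need not itself be minimal, so the classification must be applied to its minimal model (blowing up changes neither $b_1$ nor $\kappa$, and blow-ups of rational or ruled surfaces remain K\"ahler), after which $b_1(\widetilde X)=1$ follows; and minimality is not actually needed for $p_g=0$, since $\kappa=-\infty$ alone kills $H^0(K_X)$.
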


\begin{corollary} Let $G$ be a finitely presented group. If $G$ has a
finite index subgroup $G^\prime$ such that
$b_1 (G^\prime ) > 1$, then $G$ cannot be the fundamental group of a
complex surface of Class VII. \label{viicor} \end{corollary}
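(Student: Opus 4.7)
The plan is to argue by contradiction. Suppose $X$ is a compact complex surface of Class VII with $\pi_1(X) = G$, and $G'$ is a finite index subgroup with $b_1(G') > 1$.

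First, I would reduce to the minimal case. Pass to a minimal model $X_{\min}$ obtained from $X$ by successively blowing down $(-1)$-curves. Blowing down a $(-1)$-curve is a local modification that does not affect the fundamental group, so $\pi_1(X_{\min}) = G$. Moreover, both $b_1$ and the Kodaira dimension are birational invariants of smooth compact complex surfaces, so $X_{\min}$ is still of Class VII (and now minimal).

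Next, let $X'$ be the finite-sheeted cover of $X_{\min}$ corresponding to the finite index subgroup $G' \subset G = \pi_1(X_{\min})$. Then $\pi_1(X') = G'$ by construction. By Lemma \ref{vii}, since $X_{\min}$ is minimal of Class VII, the finite cover $X'$ is again of Class VII. In particular $b_1(X') = 1$.

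On the other hand, since $H_1(X',\mathbb{Z})$ is the abelianization of $\pi_1(X') = G'$, we have
\[
b_1(X') \;=\; \operatorname{rank}\bigl(H_1(X',\mathbb{Z})\bigr) \;=\; \operatorname{rank}\bigl(G'^{\mathrm{ab}}\bigr) \;=\; b_1(G') \;>\; 1,
\]
contradicting $b_1(X') = 1$. The only step that requires any care is the reduction to the minimal model, and this is standard surface theory: one must confirm that blowups and blowdowns preserve $\pi_1$ and that Class VII is preserved under passage to the minimal model, after which Lemma \ref{vii} applies directly.
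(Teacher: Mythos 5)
Your proof is correct and is exactly the argument the paper intends: the corollary is stated as an immediate consequence of Lemma \ref{vii}, via passing to a minimal model (which preserves $\pi_1$, $b_1$ and the Kodaira dimension), taking the finite cover corresponding to $G'$, and comparing $b_1(X')=b_1(G')>1$ with the fact that a Class VII surface has $b_1=1$. No discrepancy with the paper's (implicit) proof.
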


Combining Theorem
\ref{luecke} with Corollary \ref{viicor}, we have the following:

\begin{corollary}
Let $$ 1 \longrightarrow N \stackrel{i}{\longrightarrow} G
\stackrel{q}{\longrightarrow} Q \longrightarrow 1$$  be an exact
sequence of
groups, where $Q$ is
the fundamental group of a closed 3-manifold $M$, and $N$ is finitely
presented. Suppose further that at least
one of the following conditions is satisfied:
\begin{enumerate}
\item[(a)] $M$ contains  an incompressible torus and is not a $Sol$
manifold.
\item[(b)] $M$ is a Seifert-fibered space such that the base of the
Seifert fibration is not an elliptic orbifold.
\item[(c)]  $M$ is not prime.
\end{enumerate}
Then $G$ cannot be the fundamental group of a complex surface of class
VII. \label{not7} \end{corollary}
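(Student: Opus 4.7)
The plan is to derive a contradiction by producing a finite-index subgroup of $G$ with first Betti number at least $2$ and then invoking Corollary \ref{viicor}. Suppose toward contradiction that $G=\pi_1(X)$ for $X$ a compact complex surface of Class VII. Under any of the hypotheses (a), (b), (c), Theorem \ref{luecke} furnishes a finite cover $\rho_1\colon M_1 \longrightarrow M$ with $\operatorname{rank}(H_1(M_1)) \geq 3$.

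Pull this cover back along $q$: set $Q_1:=\pi_1(M_1)\subset Q$, which has finite index in $Q$, and $G_1:=q^{-1}(Q_1)\subset G$, which has finite index in $G$. The restriction of $q$ gives a short exact sequence
$$1 \longrightarrow N \longrightarrow G_1 \longrightarrow Q_1 \longrightarrow 1,$$
and in particular a surjection $G_1 \twoheadrightarrow Q_1$. Abelianizing yields a surjection of finitely generated abelian groups $H_1(G_1) \twoheadrightarrow H_1(Q_1)$, so
$$b_1(G_1) \;\geq\; b_1(Q_1) \;=\; \operatorname{rank}(H_1(M_1)) \;\geq\; 3.$$

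Since $G$ possesses a finite-index subgroup $G_1$ with $b_1(G_1)>1$, Corollary \ref{viicor} forbids $G$ from being the fundamental group of a Class VII complex surface, contradicting our assumption. This completes the argument; the only substantive input is the assembly of Theorem \ref{luecke} (to supply the cover with large $H_1$) with the elementary fact that $b_1$ can only grow along extensions by a kernel (we do not even need $N$ to be finitely presented for the inequality on $b_1$—that hypothesis is present in the statement for uniformity with the surrounding results). There is no real obstacle; the argument is essentially a bookkeeping exercise assembling already-stated results.
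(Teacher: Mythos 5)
Your argument is correct and is essentially the paper's own proof: the paper likewise invokes Theorem \ref{luecke} to produce a finite cover $M_1$ of $M$ with $b_1 \geq 3$, pulls it back along the surjection $q$ to get a finite-index subgroup of $G$ with $b_1 > 1$, and concludes via Corollary \ref{viicor}. Your side remark that finite presentability of $N$ is not actually used here is also consistent with the paper's proof, which never touches $N$.
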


\begin{proof}
This is a consequence of the following simple fact: Let
$A$ and $B$ be topological spaces
and $q: \pi_1(A) \longrightarrow \pi_1(B)$ a surjective homomorphism. If
$B_1$ is a finite cover of $B$ with positive $b_1$, then the finite
cover $A_1$ of $A$ corresponding to the
subgroup $q^{-1}(B_1)$ also has positive $b_1$.
\end{proof}

\begin{theorem}[\cite{carlson-toledo-vii}]  Let $M$ be a non-elliptic
compact complex surface with first Betti number one and admitting no
nonconstant meromorphic functions. Let $N$ be
a compact Riemannian manifold of constant negative curvature. Let $\phi
: \pi_1 (M) \longrightarrow \pi_1 (N)$
be a homomorphism. Then the image of $\phi$ is either trivial or an infinite cyclic group. \label{ctvii} \end{theorem}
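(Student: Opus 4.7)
The plan is to proceed by contradiction: assume that $\phi(\pi_1(M))$ is neither trivial nor infinite cyclic. Since $N$ is a closed hyperbolic manifold, $\pi_1(N)$ is a torsion-free cocompact lattice in $\Isom(\Hyp^n)$ with no parabolic elements, so every discrete elementary subgroup of $\pi_1(N)$ is either trivial or infinite cyclic (a torsion-free virtually cyclic subgroup of a hyperbolic lattice is cyclic). Consequently, under the standing assumption the image $\Gamma' := \phi(\pi_1(M))$ must be non-elementary, hence contains a free subgroup of rank two and is in particular reductive.

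Next I would construct an equivariant harmonic map. Since $\Gamma'$ is non-elementary and the target $\Hyp^n$ is a non-positively curved symmetric space, Corlette's theorem produces a smooth $\phi$-equivariant harmonic map $f:\widetilde M \longrightarrow \Hyp^n$ from the universal cover of $M$. By Gauduchon's theorem, we may choose the Hermitian metric on the compact complex surface $M$ so that $\partial\bar\partial\omega = 0$, where $\omega$ is the associated $(1,1)$-form.

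The technical heart is a Siu-Sampson style Bochner identity applied to $f$. On a compact Hermitian surface with a Gauduchon metric, integration of the Siu-Sampson identity against $\omega$ makes the Hermitian torsion contributions drop out. Combined with the strict negativity of the complexified sectional curvature of $\Hyp^n$, one concludes that $f$ is pluriharmonic and $\partial f$ has complex rank at most one at every point. A standard foliation/monodromy argument (in the spirit of Siu and Carlson-Toledo) then yields, possibly after passing to a finite cover of $M$, a factorization of $f$ through the lift of a surjective holomorphic map $\psi: M \longrightarrow S$ with connected fibers, where $S$ is a Riemann surface or hyperbolic $2$-orbifold whose (orbifold) fundamental group maps onto $\Gamma'$.

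This provides the contradiction: any such $S$ carries nonconstant meromorphic functions, and pulling one back via $\psi$ yields a nonconstant meromorphic function on $M$, contradicting the hypothesis. The main obstacle is the pluriharmonicity step, namely extending Siu's Kähler pluriharmonicity theorem to the non-Kähler Hermitian setting via the Gauduchon trick on complex surfaces; this is the principal technical contribution of \cite{carlson-toledo-vii}, and one should also check that the foliation produced by the rank-one kernel of $\partial f$ is closed and has the right algebraic properties to descend to a holomorphic map on $M$ rather than merely on $\widetilde M$.
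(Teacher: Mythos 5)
This statement is quoted from \cite{carlson-toledo-vii} and the paper offers no proof of it, so there is no internal argument to compare yours against; the only question is whether your sketch matches the cited source. It does: the reduction to a non-elementary (hence reductive) image, the equivariant harmonic map into $\mathbb{H}^n$, pluriharmonicity plus Sampson's rank-one bound on $\partial f$ for constant negative curvature targets, factorization through a curve, and the contradiction with algebraic dimension zero is exactly the Carlson--Toledo strategy. The one caveat is the mechanism at the step you yourself flag as the technical heart: for a non-K\"ahler Hermitian metric the genuine harmonic map equation carries Chern-torsion terms that a Gauduchon metric alone does not kill, and Carlson--Toledo instead invoke the Jost--Yau theory of Hermitian harmonic maps (existence, uniqueness and the Siu--Sampson-type Bochner identity for the modified equation $\operatorname{tr}_\omega D''\partial f=0$); with that substitution your outline is the proof.
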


\subsection{Class VII Surfaces}

Let $$ 1 \longrightarrow N \stackrel{i}{\longrightarrow} G
\stackrel{q}{\longrightarrow} Q \longrightarrow 1$$  be an exact
sequence of
groups where $Q$ is an infinite, non-virtually cyclic
 fundamental group of a closed 3-manifold $M$.  Suppose further that $G$ is the fundamental group of a Class VII surface. {\it No assumptions are made on
$N$ for the purposes of this section.}
Then by Corollary \ref{not7} $M$ is prime and does not admit a non-trivial torus decomposition. Also $M$ cannot be Seifert fibered  over a hyperbolic or flat orbifold.

One can assume that $X$ is minimal (as blowing down does not change fundamental
group).
Then either $X$ is a Hopf surface or else it does not admit any non-constant meromorphic functions.

$X$ cannot be Hopf as Hopf surfaces have infinite cyclic fundamental group
and $\pi_1(M)$ is infinite, and not virtually cyclic. Hence $X$ does not admit any non-constant meromorphic functions.

  The quotient map from $G$ to $Q$ is surjective. Hence, by
Theorem \ref{ctvii}, the manifold $M$ cannot be hyperbolic.

Finally we dispose of the case that $Q$ is $Sol$.
This
 is ruled out by the following adaptation of an argument of
Kotschick \cite{kotschick} that we reproduce here.

Since $\pi_1(X)$ surjects onto $\pi_1(M)$ and the latter fibers over the circle,
it follows that the classifying map $\phi_q$ for $q: \pi_1(X)
\longrightarrow \pi_1(M)$ induces an isomorphism
$$\phi_{q,1}^\ast : H^1(M) \longrightarrow H^1(X)$$ and an injective map
$\phi_{q,2}^\ast : H^2(M) \longrightarrow H^2(X)$. Let $\alpha$ be a
generator of $H^2(M, {\mathbb{Z}})$. Then
$\alpha \cup \alpha = 0$ and so $\phi_{q,2}^\ast \alpha \cup \phi_{q,2}^\ast \alpha = 0$.
 which makes the intersection form indefinite.
This contradicts Lemma \ref{vii}.
We summarize our conclusions as follows.

\begin{theorem}
Let $$ 1 \longrightarrow N \stackrel{i}{\longrightarrow} G
\stackrel{q}{\longrightarrow} Q \longrightarrow 1$$  be an exact
sequence of
groups such that
 $Q = \pi_1(M)$ is
the fundamental group of a  closed 3-manifold $M$ which is infinite and not virtually cyclic.
Then $G$ cannot be the fundamental group of a Class VII complex surface.
\label{main7} \end{theorem}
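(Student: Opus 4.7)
The plan is to eliminate, geometry by geometry, every possibility for $M$ using the restrictions on fundamental groups of Class~VII surfaces collected above, until no case remains.

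First I would replace $X$ by its minimal model, losing nothing since blowing down preserves $\pi_1$. I would then invoke Corollary~\ref{not7} to force $M$ to be prime, to admit no non-trivial torus decomposition, and --- if Seifert fibered --- to have elliptic base orbifold. Since $Q$ is infinite and not virtually cyclic, this rules out the spherical geometries $S^3$ and $S^2 \times E^1$ as well as Seifert fibrations over an elliptic base orbifold, all of which yield virtually cyclic fundamental groups. By the Geometrization Theorem the only geometric models of $M$ still on the table are hyperbolic and $Sol$.

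Next I would pin down the nature of $X$. By Theorem~\ref{kodaira-cx}, a minimal Class~VII surface is either a Hopf surface or admits no non-constant meromorphic functions. A Hopf surface has infinite cyclic fundamental group, and the surjection $q: G \to Q$ would then force $Q$ to be (virtually) cyclic --- contradiction. So $X$ is a minimal Class~VII surface without non-constant meromorphic functions, and thus fits the hypotheses of Theorem~\ref{ctvii}. The hyperbolic case of $M$ now follows immediately: Theorem~\ref{ctvii} asserts that the image of any homomorphism from $\pi_1(X)$ into the fundamental group of a closed hyperbolic 3-manifold is trivial or infinite cyclic, contradicting the surjectivity of $q$ onto the non-virtually-cyclic $Q$.

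The main obstacle is the remaining $Sol$ case, which I would handle by adapting the cohomological argument of Kotschick~\cite{kotschick}. A $Sol$ 3-manifold virtually fibers over the circle, is aspherical, and satisfies $b_1(M) = 1 = b_1(X)$. The classifying map $\phi_q: X \to M = K(Q,1)$ associated to $q$ therefore induces an isomorphism $\phi_{q,1}^\ast: H^1(M) \to H^1(X)$ and an injection $\phi_{q,2}^\ast: H^2(M) \to H^2(X)$. Let $\alpha$ be a generator of $H^2(M, \mathbb Z)$. Since $H^4(M) = 0$ for a 3-manifold, $\alpha \cup \alpha = 0$, and hence $\phi_{q,2}^\ast \alpha$ is a non-zero isotropic class in $H^2(X, \mathbb Z)/\mathrm{Tor}$. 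But Lemma~\ref{vii} asserts that the intersection form on $H_2(X, \mathbb Z)/\mathrm{Tor}$ of a minimal Class~VII surface (and its finite covers) is negative definite, so admits no non-zero isotropic class. This contradiction eliminates $Sol$ and completes the argument.
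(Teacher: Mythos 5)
Your argument is correct and follows essentially the same route as the paper: reduce to a minimal model, use Corollary~\ref{not7} together with the hypothesis on $Q$ to leave only the hyperbolic and $Sol$ geometries, exclude the Hopf case via the fundamental group, kill the hyperbolic case with Theorem~\ref{ctvii}, and kill the $Sol$ case with Kotschick's cup-product argument against the negative definiteness in Lemma~\ref{vii}. Your phrasing of the last step (a non-zero isotropic class contradicting negative definiteness) is, if anything, slightly cleaner than the paper's appeal to indefiniteness.
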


\subsection{Elliptic Fibrations}
Let $$ 1 \longrightarrow N \stackrel{i}{\longrightarrow} G
\stackrel{q}{\longrightarrow} Q \longrightarrow 1$$  be an exact
sequence of
groups where $Q$ is an infinite, non-virtually cyclic
 fundamental group of a closed 3-manifold $M$.  Suppose further that $G$ is the fundamental group of a compact complex surface $X$
admitting an elliptic fibration. Assume that $N$ is finitely generated.

If the fibration is singular, then the inclusion of the fiber subgroup $\ints \oplus \ints$
into $\pi_1(X)$ has non-trivial kernel and
 $\pi_1(X)$ must have rational cohomological dimension at most 3. In fact, by Theorem 2.3
of \cite{fm},
existence of singular fibers forces $\pi_1(X)$ to be equal to the fundamental group of the base orbifold of complex dimension one.
Hence we have an exact sequence
$$ 1 \longrightarrow N \stackrel{i}{\longrightarrow} G
\stackrel{q}{\longrightarrow} Q \longrightarrow 1$$ where $G$ is the
fundamental
group of an orbifold of complex dimension one and $N$ is a finitely generated
normal subgroup of $G$. If the orbifold is hyperbolic, the finitely generated normal subgroup
$N$ must be finite and $Q$ must therefore have rational cohomological dimension two.
But the assumptions on $Q$ force it to have cohomological dimension 3.

If the orbifold is Euclidean, then after passing to a finite index subgroup if necessary, we
have an exact sequence
$$ 1 \longrightarrow N \stackrel{i}{\longrightarrow} \ints \oplus \ints
\stackrel{q}{\longrightarrow} Q \longrightarrow 1\, .$$ This forces $Q$
to
have
cohomological dimension at most two. A contradiction again.

Hence the elliptic fibration must be non-singular (with possibly multiple fibers)
and we have an exact sequence (after passing to a finite cover again if necessary)
\begin{equation}\label{ef3}
1 \,\longrightarrow \,N_0 \,(=\,  \ints \oplus \ints )\,
\stackrel{\phi}{\longrightarrow} \,G\, \stackrel{\psi}{\longrightarrow}
\,\pi_1(S) \,\longrightarrow\, 1\, ,
\end{equation}
where $S$ is a
closed orientable surface of positive genus. In particular
by Corollary \ref{ell}, the group $G$ admits a closed aspherical
4-manifold
$X$ as a $K(G,1)$. Further, there is a holomorphic fiber bundle
structure on $X$ realizing the exact sequence \eqref{ef3} with one
dimensional
complex tori  as fibers. The universal cover of $X$ is homeomorphic to
$\rls^4$.

We first show that $M$ has to be prime. Assume that $M$ is not
prime. Then $Q$ is a
non-trivial free product that is not virtually
cyclic. Hence the abelian normal subgroup $q(N_0) \subset Q$ must be
trivial, i.e., $N_0 = {\mathbb{Z}} \oplus {\mathbb{Z}}$
 must be contained in $N$. Therefore we have an exact sequence
$$1 \longrightarrow  \frac{N}{N_0} \longrightarrow \pi_1(S)
\longrightarrow Q \longrightarrow 1\, ,$$
where $\frac{N}{N_0}$ is a finitely generated normal subgroup of
$\pi_1(S)$, and $Q$ is a non-trivial free product.
This is again impossible  by Lemma \ref{impos}. Hence $M$ is prime.

Since $\pi_1(M)$ is not virtually cyclic, it follows that $Q$ is a $PD(3)$ group  by the sphere theorem.
We shall need the following Theorem of Bieri and Eckmann.

\begin{theorem}[\cite{be}]
Let $1 \longrightarrow K \longrightarrow H \longrightarrow
L\longrightarrow 1$  be a
short exact sequence of groups.
If $K$ and $L$ are duality groups of dimensions $n$ and $m$ respectively, then $H$ is
a duality group of dimension $(m+n)$.  \label{duality} \end{theorem}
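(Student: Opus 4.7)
My plan is to apply the Lyndon--Hochschild--Serre spectral sequence of the extension $1 \to K \to H \to L \to 1$ with coefficients in $\Z H$, and to exploit the Bieri--Eckmann characterization of duality groups: a group $G$ is a duality group of dimension $n$ with dualizing module $D_G$ if and only if $G$ is of type $FP$, $H^i(G, \Z G) = 0$ for $i \neq n$, and $D_G := H^n(G, \Z G)$ is torsion-free as an abelian group. Both $K$ and $L$ are of type $FP$ by hypothesis, and type $FP$ is closed under extensions with $FP$ kernel and quotient, so $H$ is $FP$ as well. It therefore suffices to compute $H^*(H, \Z H)$ and verify the cohomological vanishing and torsion-freeness.

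The first substantive step is to compute $H^q(K, \Z H)$ as an $L$-module. A set-theoretic section $s \colon L \to H$ exhibits $\Z H \cong \Z K \otimes_{\Z} \Z L$ as a left $\Z K$-module with $K$ acting only on the first factor. Because $K$ is of type $FP$, cohomology of $K$ commutes with direct sums, giving
$$H^q(K, \Z H) \;\cong\; H^q(K, \Z K) \otimes_{\Z} \Z L,$$
which vanishes for $q \neq n$ and equals $D_K \otimes_{\Z} \Z L$ for $q = n$. The induced $L$-action combines translation on the $\Z L$ factor with the action of $L$ on $D_K$ coming from conjugation by the section. The key point is that with this structure, $D_K \otimes \Z L$ is a free (equivalently, coinduced) $\Z L$-module whose $\Z L$-rank equals the underlying rank of the abelian group $D_K$.

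I next feed this into the LHS spectral sequence $E_2^{p,q} = H^p(L, H^q(K, \Z H)) \Rightarrow H^{p+q}(H, \Z H)$. Since $E_2^{p,q}$ is concentrated in the single row $q = n$, all differentials vanish and the sequence collapses, yielding
$$H^{p+n}(H, \Z H) \;\cong\; H^p\bigl(L,\; D_K \otimes_{\Z} \Z L\bigr).$$
Applying the duality of $L$ to the free $\Z L$-module $D_K \otimes \Z L$ (essentially a Shapiro-type argument tensored with $D_K$), the right-hand side vanishes for $p \neq m$ and for $p = m$ is isomorphic to $D_L \otimes_{\Z} D_K$. Torsion-freeness of this group follows because the tensor product of two torsion-free abelian groups is torsion-free. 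Combining the two vanishing results shows $H^i(H, \Z H) = 0$ for $i \neq m+n$ and that $H^{m+n}(H, \Z H) \cong D_K \otimes_{\Z} D_L$ is torsion-free, so by the Bieri--Eckmann criterion $H$ is a duality group of dimension $m+n$ with dualizing module $D_K \otimes D_L$.

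The main technical obstacle is the careful bookkeeping of module structures in the spectral sequence: one must track precisely how $L$ acts on $H^n(K, \Z H)$ and verify that the resulting module is genuinely free (i.e., coinduced from $\Z$) over $\Z L$, so that the duality of $L$ applies as claimed. Everything else is formal once this identification is in place. I would also check, en route, that cohomology of an $FP$ group commutes with arbitrary direct sums (which is where type $FP$ of $K$ enters) and that the spectral sequence is convergent in the strong sense needed for the module $\Z H$ (guaranteed because $L$ is $FP$ and $K$ has finite cohomological dimension).
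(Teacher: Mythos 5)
The paper does not prove this statement at all: it is quoted verbatim from Bieri--Eckmann \cite{be} (their Theorem~3.5) and used as a black box, so there is no in-paper argument to compare against. Your proof is essentially the original Bieri--Eckmann argument: identify $\Z H$ as a direct sum of copies of $\Z K$ indexed by $L$, use type $FP$ of $K$ to pull the sum out of $H^*(K,-)$, feed the resulting one-row $E_2$-page into the Lyndon--Hochschild--Serre spectral sequence, and finish with the duality of $L$ applied to $H^n(K,\Z H)$. The structure and all the main steps are correct.

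One point should be tightened. You assert that $H^n(K,\Z H)\cong D_K\otimes_{\Z}\Z L$ is a \emph{free} $\Z L$-module ``whose $\Z L$-rank equals the underlying rank of the abelian group $D_K$.'' That is only true when $D_K$ is free as an abelian group, which the definition of a duality group does not guarantee; the dualizing module is merely torsion-free. The correct statement is that, after the standard untwisting isomorphism $m\otimes g\mapsto g^{-1}m\otimes g$, this module is \emph{induced} from the trivial subgroup, i.e.\ of the form $\Z L\otimes_{\Z}A$ with $A=D_K$ carrying no $L$-action. The computation of $H^p(L,\Z L\otimes_{\Z}A)$ then goes through either by the duality of $L$ (homology of an induced module is concentrated in degree $0$, where it gives the coinvariants $D_L\otimes_{\Z}D_K$) or by a universal-coefficient argument using a finite projective resolution for $L$; in the latter route the relevant $\operatorname{Tor}$ term dies because $D_L$ is torsion-free, hence flat over $\Z$. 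So the hypothesis you actually need at this step is torsion-freeness (equivalently flatness) of the dualizing modules, not freeness --- and that hypothesis you do have. With that substitution your argument is complete and agrees with the source the paper cites.
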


We shall try to understand the structure of $M$ in terms of the rank of
$q(N_0)$. The rank of $q(N_0)$ is zero, one, or two.\\

\smallskip

\noindent {\bf Case 1: Rank of $q(N_0)$ is  zero.} Then $N \bigcap N_0$
has rank 2 and we have an exact sequence
$$1 \longrightarrow N \bigcap N_0 \longrightarrow G^\prime
\stackrel{q}{\longrightarrow} Q \longrightarrow 1\, ,$$ where
$G^{\prime}$
is a
finitely presented subgroup of $G$. The exact sequence
 forces $G^\prime$ to have rational cohomological dimension $5$ by Theorem \ref{duality}. Since $G$ has
cohomological dimension $4$, this is impossible.

\smallskip

\noindent {\bf Case 2: Rank of $q(N_0)$ is  one.} Since
the rank of $q(N_0)$ is  one, then $Q$ has a normal $\ints $ subgroup. Hence $M$ is a Seifert-fibered space by by Theorem \ref{sfs}.
Further $N \bigcap N_0$ has rank one and so there exists an infinite
cyclic
 subgroup $N_1$ of finite index in $N \bigcap N_0$. Hence we have an
exact sequence
$$1 \longrightarrow N_1 \longrightarrow G^\prime
\stackrel{q}{\longrightarrow} Q \longrightarrow 1\, ,$$ for some
subgroup
$G^\prime$ of $G$. This forces $G^\prime$ to be
a Poincar\'e duality group of dimension $4$ by Theorem \ref{duality} and so $G^\prime$ is of finite index in $G$. Hence a finite-sheeted cover of $X$ is a
circle bundle over a Seifert-fibered space.

\smallskip

\noindent {\bf Case 3: Rank of $q(N_0)$ is  two.} Then $Q$ has a normal $\ints \oplus \ints$ subgroup. Hence $M$ is virtually a torus bundle over
the circle \cite{hempel-book}. Further, $N \bigcap N_0 = \{1 \}$. Hence
$NN_0$ is a normal subgroup
of $G$ and for all $n \in N, m \in N_0, mn=nm$. Also, $\frac{G}{NN_0} =
\ints$.
Since $G$ is torsion-free, so is $N$. Also, $N$ must be infinite
as $G$ has rational cohomological dimension $4$ and $Q$ has rational
cohomological dimension $3$. Let $H$ be any infinite cyclic
subgroup of $N$. Then $HN_0$ is isomorphic to $\ints \oplus \ints \oplus \ints$ and we have a short exact sequence
\begin{equation}\label{ef4}
1 \longrightarrow HN_0 \longrightarrow  G^\prime
\longrightarrow H_1 (=\ints ) \longrightarrow 1
\end{equation}
for some subgroup $G^\prime$ of $G$. This forces $G^\prime$ to
be
a Poincar\'e duality group of dimension $4$ by Theorem \ref{duality} and so $G^\prime$ is of finite index in $G$.
Further it forces $H$ to have finite index in $N$ and hence $N$ is
infinite cyclic. Also the exact sequence in \eqref{ef4} must split
and hence by passing to a double cover if necessary we may assume that
if $s: H_1 \longrightarrow G^\prime$ is a section,
then for all $h \in H$ and $h_1 \in s(H_1)$, $hh_1=h_1h$. Therefore, $X$
is virtually both a $3$-torus bundle over the circle as well as
a $2$-torus bundle over a $2$-torus. By passing to a finite-sheeted cover if necessary, we may assume that $N$ is central.
Since each element of $N$ commutes with each element of $N_0$ as well as each element of $s(H_1)$, it follows that
$G^\prime = H \oplus Q^\prime$, where $ Q^\prime$ is the fundamental group of a $2$-torus bundle over the circle.

If $M$ has $Sol$ geometry, then $ Q^\prime$ is a $Sol$ group also and hence $b_1(G^\prime )= 1 + b_1(Q^\prime) = 2$, which in turn implies that
the cover of $X$ with fundamental group $G^\prime$ is K\"ahler. This contradicts Theorem \ref{main1}. Let $M^\prime$ be the
cover of $M$ corresponding to $ Q^\prime$. Hence the monodromy of the torus bundle $M^\prime$ is either reducible or periodic.
In either case, $M^\prime$ and hence $M$ is Seifert-fibered.

We summarize the discussion as follows.

\begin{prop} Let $$ 1 \longrightarrow N \stackrel{i}{\longrightarrow} G
\stackrel{q}{\longrightarrow} Q \longrightarrow 1$$  be an exact
sequence of
groups where $Q = \pi_1(M)$ is an infinite non-virtually cyclic
 fundamental group of a closed  3-manifold $M$, $G$ is the fundamental group of an elliptic complex surface
 and $N$ is finitely generated. Then
\begin{enumerate}
\item $N$ must be virtually infinite cyclic, and
\item $M$ must be
 a Seifert-fibered space whose base orbifold is  flat or hyperbolic and a finite sheeted cover of $X$ must be
 a circle bundle over a finite sheeted cover of $M$.
\end{enumerate}
   \label{ellprop} \end{prop}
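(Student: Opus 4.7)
The plan is to begin by reducing to a nonsingular elliptic fibration. If the fibration were singular, Theorem 2.3 of \cite{fm} would force $G$ to coincide with the fundamental group of the complex one-dimensional base orbifold; depending on whether the base is hyperbolic (finitely generated normal subgroups must be finite) or Euclidean (so $G$ is virtually $\mathbb{Z}^2$), one would force $Q$ to have rational cohomological dimension at most $2$, contradicting its origin as $\pi_1$ of a closed $3$-manifold with $Q$ infinite and not virtually cyclic. Hence the fibration is nonsingular with possibly multiple fibers and, after a finite cover, Corollary \ref{ell} supplies a Kodaira-type exact sequence
$$1 \,\longrightarrow\, N_0\,(=\mathbb{Z}\oplus\mathbb{Z})\, \longrightarrow\, G \,\longrightarrow\, \pi_1(S) \,\longrightarrow\, 1,$$
with $S$ a closed orientable surface of positive genus and $X=K(G,1)$ a closed aspherical $4$-manifold.

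Next I would rule out a nontrivial prime decomposition of $M$: if $Q$ were a non-trivial, non-virtually cyclic free product, its only abelian normal subgroup is trivial, so $N_0 \subset N$, and then $1 \to N/N_0 \to \pi_1(S) \to Q \to 1$ contradicts Lemma \ref{impos}. Hence $M$ is prime, and the Sphere Theorem makes $Q$ a $\mathrm{PD}(3)$ group. The main analysis then splits on the rank $r$ of the image $q(N_0)\subset Q$, which is $0$, $1$, or $2$, and in each case I would apply the Bieri--Eckmann duality theorem \ref{duality} to a suitable finite-index subgroup $G'\subset G$ obtained by pulling back $N\cap N_0$. Case $r=0$ gives an exact sequence $1\to\mathbb{Z}^2\to G'\to Q\to 1$ which would make $G'$ a $\mathrm{PD}(5)$ group, contradicting $\mathrm{cd}(G)=4$. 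Case $r=1$ supplies a normal $\mathbb{Z}$ in $Q$, so $M$ is Seifert-fibered by Theorem \ref{sfs}; taking an infinite cyclic $N_1$ of finite index in $N\cap N_0$ produces $1\to N_1\to G'\to Q\to 1$, whose $G'$ is $\mathrm{PD}(4)$ and hence of finite index in $G$, displaying a finite cover of $X$ as a circle bundle over a finite cover of $M$ and showing that $N$ is virtually infinite cyclic.

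The delicate case is $r=2$, where $q(N_0)$ is a normal $\mathbb{Z}\oplus\mathbb{Z}$ in $Q$. Here $q|_{N_0}$ is injective (its kernel is finite in the torsion-free group $\mathbb{Z}^2$), so $N\cap N_0=\{1\}$, $NN_0$ is normal with quotient $\mathbb{Z}$, and $N$ centralizes $N_0$. For any infinite cyclic $H\subset N$, the sequence $1\to HN_0\,(\cong\mathbb{Z}^3)\to G'\to\mathbb{Z}\to 1$ must again yield a $\mathrm{PD}(4)$ group by Theorem \ref{duality}, forcing $H$ to have finite index in $N$ and $G'$ to have finite index in $G$; so $N$ is virtually infinite cyclic. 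The main obstacle now is to exclude the Sol case: after a further finite cover the $\mathbb{Z}$-extension splits as $G'\cong H\oplus Q'$, where $Q'$ is $\pi_1$ of a $2$-torus bundle over $S^1$. If $M$ were Sol, then $b_1(Q')=1$ and $b_1(G')=2$ would be even, making the corresponding finite cover of $X$ K\"ahler by Theorem \ref{kodaira-Kahler} and contradicting Theorem \ref{main1}. Hence the monodromy is reducible or periodic, $M$ is Seifert-fibered with flat base orbifold, and combining the three cases yields both conclusions of the proposition.
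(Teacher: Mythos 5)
Your proposal is correct and follows essentially the same route as the paper: reduction to a nonsingular fibration via Theorem 2.3 of \cite{fm}, the Kodaira sequence $1\to N_0\to G\to\pi_1(S)\to 1$, primality of $M$ via Lemma \ref{impos}, the Bieri--Eckmann trichotomy on the rank of $q(N_0)$, and the exclusion of $Sol$ by the parity of $b_1(G')$ against Theorem \ref{main1}. The only detail you elide is the observation that in the rank-two case $N$ is infinite and torsion-free (since $G$ is and $\mathrm{cd}(G)=4>\mathrm{cd}(Q)=3$), which is what guarantees the existence of the infinite cyclic subgroup $H\subset N$; this is exactly how the paper justifies that step.
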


Combining Proposition \ref{ellprop} with Theorem \ref{main7} we have the
second main theorem of our paper.

\begin{theorem}
Let $$ 1 \longrightarrow N \stackrel{i}{\longrightarrow} G
\stackrel{q}{\longrightarrow} Q \longrightarrow 1$$  be an exact
sequence of
groups such that
\begin{enumerate}
\item[(a)] $Q = \pi_1(M)$ is
the fundamental group of a  closed 3-manifold $M$ such that $Q$ is
infinite and not virtually cyclic,
\item[(b)]  $G$ is the fundamental group of a compact complex surface,
and
\item[(c)] $N$ is finitely presented.
\end{enumerate}
Then there exists a finite index subgroup $Q^\prime$ of $Q$ such that
$Q^\prime$ is the fundamental group of a Seifert-fibered 3-manifold with
hyperbolic or flat base. Also, there exists
a finite index subgroup $G^\prime$ of $G$ such that $G^\prime$ is the fundamental group of an elliptic complex surface $X$
which is
 a circle bundle over a Seifert-fibered 3-manifold.
\label{main2} \end{theorem}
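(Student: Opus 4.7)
My plan is to split the argument according to the parity of $b_1(X)$, where $X$ is a compact complex surface with $\pi_1(X) = G$, and combine the Enriques--Kodaira classification with the two main results already available in this paper: Theorem \ref{main1} (for K\"ahler $G$) and Proposition \ref{ellprop} (for elliptic $X$), after first using Theorem \ref{main7} to dispose of Class VII surfaces.

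If $b_1(X)$ is odd, Theorem \ref{kodaira-cx} reduces $X$ to one of two possibilities: elliptic or of Class VII. Theorem \ref{main7} applies verbatim under our standing hypotheses on $Q$ (infinite, not virtually cyclic), so Class VII is excluded. Thus $X$ is elliptic, and since $N$ is finitely presented (hence finitely generated), Proposition \ref{ellprop} fires directly, yielding both conclusions simultaneously: $M$ is virtually Seifert-fibered with flat or hyperbolic base orbifold, and a finite cover of $X$ is a circle bundle over a finite cover of $M$.

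If $b_1(X)$ is even, Theorem \ref{kodaira-Kahler} shows $X$ is K\"ahler, so Theorem \ref{main1} provides a finite-index $Q' \subset Q$ which is either a finite index subgroup of the $3$-dimensional Heisenberg group (Nil geometry, Seifert over a flat orbifold) or $\pi_1(\Sigma \times S^1)$ for $\Sigma$ a closed oriented surface of positive genus (geometry $\E^3$ or $\Hyp^2 \times \E^1$, Seifert over flat or hyperbolic base). This settles the $Q'$ part of the conclusion. For the $G'$ part I would argue that a K\"ahler $X$ whose fundamental group admits such a very constrained surjection with finitely presented kernel must itself be virtually elliptic, and then reapply Proposition \ref{ellprop} to extract the desired circle bundle structure on a finite cover.

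The hardest step is precisely this last one: establishing virtual ellipticity of $X$ in the K\"ahler (even $b_1$) case. In the odd-$b_1$ case Kodaira's dichotomy dispatches this instantly, but for even $b_1$ one has to walk through the K\"ahler classes in the Enriques--Kodaira list and rule out K3 surfaces, complex tori, and surfaces of general type using the presentation $1 \to N \to G \to Q' \to 1$ with $Q'$ as above. I expect this to be handled by rank and cohomological dimension constraints on $G$ together with the fact that surjections from such surface fundamental groups onto a $3$-dimensional target with finitely presented kernel are very restrictive, but it is the sole non-formal ingredient in the plan.
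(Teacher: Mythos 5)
Your route is, in its essentials, the paper's own: the proof of Theorem \ref{main2} given in the text is literally the single sentence ``Combining Proposition \ref{ellprop} with Theorem \ref{main7} we have the second main theorem,'' which rests on exactly the Kodaira dichotomy you invoke for odd $b_1$ (elliptic or Class VII, the latter killed by Theorem \ref{main7}, the former fed into Proposition \ref{ellprop}). So the odd-$b_1$ half of your plan is complete and coincides with the paper. The step you single out as the ``sole non-formal ingredient''---virtual ellipticity of $X$ when $b_1(X)$ is even, i.e.\ when $X$ is K\"ahler---is indeed the only place where work remains, and you should be aware that the paper itself is silent on it: nothing in the one-line proof addresses a K\"ahler surface that is not a priori elliptic. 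You have therefore correctly located the load-bearing point rather than missed one.

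To actually close that step with the tools already in the paper, note first that in the K\"ahler case Theorem \ref{main1} already delivers the $Q'$ half of the conclusion (a finite-index subgroup of the Heisenberg group is the fundamental group of a Nil Seifert manifold over a flat base, and $\pi_1(\Sigma\times S^1)$ is Seifert over a flat or hyperbolic base), so only the $G'$ half is at stake. When $Q'=\pi_1(\Sigma\times S^1)$ with $\Sigma$ of genus at least two, compose $q$ with the projection to $\pi_1(\Sigma)$: the kernel $K$ is an extension of $\ints$ by $N$, hence finitely generated, and Theorem \ref{dp} produces a holomorphic fibration of a finite cover of $X$ over a hyperbolic orbifold whose fiber group surjects onto $K$ up to finite index. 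A genus-zero fiber is impossible since $K$ surjects onto $\ints$; a fiber of genus at least two is excluded because the kernel of the induced surjection from the fiber group onto $\ints$ would then be an infinite-index normal subgroup of a surface group, hence not finitely generated, contradicting the finite presentability of $N$; so the fiber has genus one and $X$ is virtually elliptic, after which Proposition \ref{ellprop} applies as you intend. The flat-base and Heisenberg cases need a separate (easier) argument, e.g.\ via the Albanese map. Your general-type/K3/torus exclusion plan would also work, but the fibration argument above stays entirely inside the paper's toolkit; either way, the gap is closable and does not invalidate the approach.
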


Combining Theorem \ref{main2} with Proposition \ref{ellprop} and using the fact that Seifert fibered spaces
with hyperbolic or flat base have finite sheeted covers with $b_1 > 1$,
we obtain the following theorem of Kotschick.

\begin{theorem}[\cite{kotschick}]\label{dsk2}
Let $Q$ be the fundamental group of a closed 3-manifold. Suppose further
that $Q$ is infinite and not virtually cyclic.
Then $Q$ cannot  be the fundamental group of a compact complex surface.
\end{theorem}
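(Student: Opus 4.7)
I would argue by contradiction. Suppose, contrary to the theorem, that $Q$ is the fundamental group of a compact complex surface $X$. Setting $G := Q$ and $N := \{1\}$ (the trivial group, which is vacuously finitely presented), I obtain the short exact sequence
\[
1 \longrightarrow N \longrightarrow G \stackrel{q}{\longrightarrow} Q \longrightarrow 1
\]
with $q$ the identity. All hypotheses (a)--(c) of Theorem \ref{main2} are then satisfied: $Q=\pi_1(M)$ is an infinite, non-virtually-cyclic fundamental group of the closed 3-manifold $M$; $G=\pi_1(X)$ is the fundamental group of a compact complex surface; and $N$ is trivially finitely presented.

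The plan is to run the previously established machinery with this trivial $N$ and read off an immediate contradiction. Applying Theorem \ref{main2}, I extract a finite index subgroup $G' \le G$ which is the fundamental group of a compact elliptic complex surface $X'$ that is a circle bundle over a Seifert-fibered 3-manifold whose base orbifold is hyperbolic or flat. Since $G=Q$ and $q$ is the identity, $G'$ coincides with its image $Q' := q(G')$, which is a finite index subgroup of $Q$ and hence the fundamental group of a finite cover $M'$ of $M$. Thus $Q'$ is still an infinite, non-virtually-cyclic fundamental group of a closed 3-manifold, and $G'$ is the fundamental group of an elliptic complex surface, while the induced exact sequence $1 \to \{1\} \to G' \to Q' \to 1$ has trivial kernel.

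Now I invoke Proposition \ref{ellprop} on this sequence. Its conclusion is that the kernel must be virtually infinite cyclic; but our kernel is $\{1\}$, which is not virtually infinite cyclic. This is the desired contradiction, and hence no such compact complex surface $X$ with $\pi_1(X)=Q$ can exist.

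\textbf{Where the work sits.} There is no main obstacle in the proof itself, since all the heavy lifting has been done in Theorem \ref{main2} and Proposition \ref{ellprop}; the only task is to notice that those statements remain nontrivial even when the normal subgroup $N$ is taken to be trivial, at which point the conclusion of Proposition \ref{ellprop} becomes self-contradictory and produces Theorem \ref{dsk2} for free. Alternatively, one could finish the argument through the $b_1$ route: by Theorem \ref{luecke} applied to the Seifert-fibered 3-manifold $M'$ with hyperbolic or flat base, one obtains a further finite cover of $M'$ whose $b_1$ is arbitrarily large and can be arranged to be even; the corresponding finite index subgroup $Q''\le Q=G$ is then both a 3-manifold group and, via Theorem \ref{kodaira-Kahler}, a K\"ahler group, contradicting the already proved Theorem \ref{dsk}. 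Either route closes the argument, but the direct appeal to Proposition \ref{ellprop} is the most economical.
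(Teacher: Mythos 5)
Your main argument is correct and is essentially the paper's own proof: the paper derives Theorem \ref{dsk2} precisely by combining Theorem \ref{main2} and Proposition \ref{ellprop} with $N=\{1\}$, the contradiction being that the trivial group is not virtually infinite cyclic (equivalently, that $G'=Q'$ would be an aspherical $4$-manifold group while $Q'$ has cohomological dimension $3$). One caution about your alternative route: the parity of $b_1$ is constant among finite covers of a compact complex surface (a finite cover of a non-K\"ahler surface cannot be K\"ahler, by averaging the K\"ahler form), and Theorem \ref{luecke} gives no parity control, so you cannot ``arrange'' $b_1$ to be even; in the odd case one must instead use Corollary \ref{ell} to produce an aspherical four-dimensional $K(Q'',1)$ and contradict cohomological dimension $3$.
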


\section{Further Consequences}

\subsection{Products of 3 Manifolds and Circles}
Let $$ 1 \longrightarrow N \stackrel{i}{\longrightarrow} G
\stackrel{q}{\longrightarrow} Q \longrightarrow 1$$  be an exact
sequence of
groups where $N$ is finitely presented, $Q = \pi_1(M)$ is
the fundamental group of a closed Seifert-fibered  3-manifold $M$ whose base orbifold is  flat or hyperbolic,
and $G$ is the fundamental group of a compact  complex surface.  By Theorem \ref{main2},
$X$ must be an aspherical elliptic complex surface and
 $N$ must be virtually infinite cyclic. We now briefly discuss complex structures on circle bundles over
$M$, where $M$ is Seifert-fibered.

\begin{theorem} Let M be a closed orientable 3 manifold. Then $M \times S^1$ admits a complex structure if
and only if $M$ is Seifert fibered.
\label{3times} \end{theorem}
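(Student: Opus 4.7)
The plan is to prove the two implications separately, reducing the substantive ``only if'' direction to Theorem \ref{main2}.

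For the ``if'' direction, assume $M$ is Seifert-fibered over a $2$-orbifold $B$. The circle fibers of the Seifert fibration combine with the $S^1$ factor of $M \times S^1$ into a torus fibration $M \times S^1 \to B$. Since every real $2$-orbifold admits a complex structure and one can choose a compatible family of complex structures on the $T^2$ fibers, one obtains an elliptic fibration structure, and hence a complex structure, on $M \times S^1$. This is the classical construction going back to Wall and Hillman.

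For the ``only if'' direction, assume $X := M \times S^1$ carries a complex structure. Set $G = \pi_1(X) = \pi_1(M) \times \ints$, $Q = \pi_1(M)$, and let $N = \ints$ be the factor coming from $S^1$; then $N$ is finitely presented and we have the exact sequence
$$1 \longrightarrow N \longrightarrow G \longrightarrow Q \longrightarrow 1\, .$$
If $Q$ is finite then $M$ is a spherical space form, hence Seifert-fibered. If $Q$ is infinite and virtually cyclic, then by geometrization $M$ has $S^2 \times \E^1$ geometry and again is Seifert-fibered. Otherwise Theorem \ref{main2} applies and yields a finite-index subgroup $Q' \le Q$ which is the fundamental group of some Seifert-fibered $3$-manifold $\hat M$ with flat or hyperbolic base orbifold.

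To conclude, let $M' \to M$ be the finite cover corresponding to $Q'$. Both $M'$ and $\hat M$ are closed orientable aspherical $3$-manifolds with fundamental group $Q'$, hence homotopy equivalent; by the topological rigidity of aspherical Seifert $3$-manifolds (a consequence of geometrization), they are in fact homeomorphic, so $M'$ is Seifert-fibered. Invoking the theorem of Meeks and Scott that a closed $3$-manifold virtually Seifert-fibered is itself Seifert-fibered, we conclude that $M$ is Seifert-fibered. The entire substance of the argument thus sits in Theorem \ref{main2}; the ``if'' direction and the descent from $M'$ to $M$ are routine by comparison, and are the places where nothing new is needed beyond classical $3$-manifold topology.
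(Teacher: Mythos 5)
Your proof follows the paper's route essentially verbatim: the ``only if'' direction is Theorem \ref{main2} applied to $1\to\ints\to\pi_1(M)\times\ints\to\pi_1(M)\to 1$ (the paper simply says the converse follows from Theorems \ref{main1}, \ref{main2} and Proposition \ref{ellprop}, leaving implicit the finite/virtually cyclic edge cases and the descent from the finite-index Seifert subgroup back to $M$, which you correctly supply via topological rigidity and the virtually-Seifert-implies-Seifert theorem), and the ``if'' direction is the classical elliptic-fibration construction. The one point worth flagging is that the paper proves the ``if'' direction rather than citing it, and the actual content there is not ``choose compatible complex structures on the $T^2$ fibers'' (which by itself yields nothing, since a smooth torus bundle with holomorphic base and fibers need not be a holomorphic fiber bundle) but rather that every topological $T^2$-bundle over a Riemann surface admits a holomorphic structure --- because $H^2(X,\mathcal{O}_X)=0$ forces $H^1(X,\underline{T})\to H^2(X,\ints\oplus\sqrt{-1}\cdot\ints)$ to be surjective --- combined with logarithmic transformations at the orbifold points; your appeal to the classical construction covers this, but your parenthetical justification on its own would not.
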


\begin{proof}
We will first show that every circle bundle over a Seifert-fibered
3-manifold admits a complex structure
(cf. \cite{wood}). We include a proof of it for completeness.

Let $X$ be a compact connected Riemann surface, and
let $T\,=\, {\mathbb C}/({\mathbb Z}\oplus \sqrt{-1}\cdot{\mathbb Z})$
be the elliptic curve. Consider
the short exact sequence of sheaves on $X$
\begin{equation}\label{e1}
0 \longrightarrow\, {\mathbb Z}\oplus \sqrt{-1}\cdot{\mathbb Z}
\,\longrightarrow
\, {\mathcal O}_X \, \longrightarrow\, \underline{T} \,
\longrightarrow\, 0\, ,
\end{equation}
where $\underline{T}$ is the sheaf of holomorphic functions to $T$, and
${\mathcal O}_X$ is the sheaf of holomorphic functions. Since
$H^2(X,\, {\mathcal O}_X)\,=\, 0$ (as $\dim_{\mathbb C}X\,=\, 1$), the
homomorphism
$$
H^1(X,\, \underline{T})\, \longrightarrow\, H^2(X,\, {\mathbb Z}\oplus
\sqrt{-1}\cdot{\mathbb Z})
$$
in the long exact sequence of cohomologies associated to the short exact sequence
in \eqref{e1} is
surjective. This implies that any $S^1\times S^1$-bundle over $X$
admits a complex structure.

Let $M\, \longrightarrow\, X$ be a Seifert-fibered 3-manifold
with (base) orbifold points $\{p_i\}_{i=1}^n$; let $m_i$ be the order of $p_i$.
Then a circle bundle over $M$ is diffeomorphic to a manifold obtained
by performing logarithmic transformations on a holomorphic principal
$T$-bundle over $X$; the logarithmic transformations are done over
the points $p_i$ using the automorphism of $T$ given by
the automorphism of $\mathbb C$ defined by $z\, \longmapsto\, z+1/m_i$.
(See \cite[Ch. V, \S~13]{bhpv} for logarithmic transformation.)
Therefore, we conclude that every circle bundle over a Seifert-fibered
surface admits a complex structure.

The converse direction follows from Theorems \ref{main1}, \ref{main2}
and Proposition \ref{ellprop}.\end{proof}

\subsection{Quasi-isometries}
\begin{theorem}[\cite{gersten-ijac}]   Two of the eight
3-dimensional geometries, $\widetilde{Sl_2 (\mathbb{R})}$ and
${\mathbb{H}}^2 \times \mathbb{R}$, are quasi-isometric.\label{qi}
\end{theorem}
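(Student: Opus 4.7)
The plan is to reduce to a statement about finitely generated groups via the Milnor-\v Svarc lemma and then compare two central extensions of a surface group. Fix a closed orientable hyperbolic surface $\Sigma$ of genus at least two and let $\Gamma = \pi_1(\Sigma) \subset \PSL_2(\R) = \mathrm{Isom}^+(\Hyp^2)$ be a torsion-free cocompact Fuchsian group. On the product side the group $\Gamma_0 := \Gamma \times \Z = \pi_1(\Sigma \times S^1)$ acts cocompactly, properly discontinuously, and freely by isometries on $\Hyp^2 \times \R$. On the twisted side let $\widetilde{\Gamma}$ be the preimage of $\Gamma$ under the universal covering $p: \widetilde{\SL_2(\R)} \to \PSL_2(\R)$; this is a torsion-free cocompact lattice in $\widetilde{\SL_2(\R)}$. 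Both groups are central extensions of $\Gamma$ by $\Z$, classified by elements of $H^2(\Gamma;\Z)$: the trivial class gives $\Gamma_0$, while $\widetilde{\Gamma}$ corresponds to the non-zero Euler class of $T\Sigma$.

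The heart of the proof is the claim that $\Gamma_0$ and $\widetilde{\Gamma}$ are quasi-isometric as abstract finitely generated groups. I would argue as follows. Choose a set-theoretic section $\tilde s: \Gamma \to \widetilde{\Gamma}$ with associated $2$-cocycle $\tilde c(g,h) = \tilde s(g)\tilde s(h)\tilde s(gh)^{-1} \in \Z$. By the Milnor-Wood inequality the Euler class of the Fuchsian representation of $\Gamma$ is bounded, so $\tilde s$ can be chosen so that $\tilde c$ is a bounded function on $\Gamma \times \Gamma$, say $\|\tilde c\|_\infty \le C$. Define a bijection $\Phi: \widetilde{\Gamma} \to \Gamma \times \Z$ by $\tilde s(g)\cdot z^n \mapsto (g,n)$, where $z$ generates the central $\Z$. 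Each generator multiplication in $\widetilde{\Gamma}$ introduces a central correction of absolute value at most $C$, so any word of length $n$ in a section-generating set for $\widetilde{\Gamma}$ maps to an element $(g,m)$ with $|g|_\Gamma \le n$ and $|m| \le Cn$; in particular its word length in $\Gamma_0$ is at most $(1+C)n$. The analogous estimate in the opposite direction shows $\Phi$ is a $(1+C)$-bilipschitz equivalence of word metrics.

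Once $\Gamma_0 \qeq \widetilde{\Gamma}$ is established, the theorem follows from the Milnor-\v Svarc lemma: $\widetilde{\SL_2(\R)}$ is quasi-isometric to $\widetilde{\Gamma}$ and $\Hyp^2 \times \R$ is quasi-isometric to $\Gamma_0$, so composing with $\Phi$ yields a quasi-isometry between the two model geometries.

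The main obstacle is the bounded-cocycle step in the middle. Making the word-length comparison fully rigorous requires carefully choosing the section $\tilde s$ so that the associated cocycle really is bounded, and then tracking how cocycle corrections accumulate along reduced words; the cleanest packaging is via Gersten's general principle from \cite{gersten-ijac} that any central $\Z$-extension of a word-hyperbolic group determined by a bounded $2$-class is quasi-isometric to the corresponding trivial product extension. Given this, applying it to the Euler class of a Fuchsian $\Gamma$, which is bounded by Milnor-Wood, immediately identifies the two geometries up to quasi-isometry.
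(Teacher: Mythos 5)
Your argument is correct and is precisely the one in the cited source: the paper offers no proof of Theorem \ref{qi} beyond the reference to Gersten, and Gersten's proof is exactly the bounded-cocycle comparison of the two central $\Z$-extensions of a cocompact Fuchsian group (trivial class for $\pi_1(\Sigma\times S^1)$, bounded Euler class for the lattice in $\widetilde{\SL_2(\mathbb{R})}$), followed by Milnor--\v{S}varc. The only cosmetic caveat is that your bijection $\Phi$ is a quasi-isometry rather than a genuine $(1+C)$-bilipschitz map, but the estimates you sketch suffice for that.
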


Hence the unit tangent bundle $U(S)$ of a closed surface $S$ of genus
greater than one and the product $S \times S^1$ have quasi-isometric
fundamental groups. It follows that $H_1= \pi_1 (U(S) \times S^1)$ and $H_2= \pi_1 (S \times S^1 \times S^1)$ are quasi-isometric.
 Further note that $H_1$ is not commensurable with a K\"ahler group. This can be seen as follows: If there is a finite extension $K$ of $H_1$ which is K\"ahler then clearly
$H_1$ is K\"ahler. On the other hand, neither $H_1$ nor any finite index
subgroup $H$ of $H_1$ is K\"ahler: If $X$ is the finite cover of $U(S) \times S^1$ corresponding to  $H$
then $X = M \times S^1$ where $M$ is also a nontrivial circle bundle over a surface $S'$. Now a presentation of $\pi_1(M)$ is given by
$$ \langle a_1,b_1,\cdots ,a_g,b_g,t \ \vert \ \Pi_{i=1}^g[a_i,b_i]=t^k
\rangle $$
where $\{a_1,b_1,\cdots ,a_g,b_g\}$ is the usual set of generators for
$\pi_1(S')$ and $k$ is a nonzero integer.

Abelianizing one sees that $b_1(X) = b_1(S')+1$ and hence $b_1(H)$ is odd. $H_2$ is clearly K\"ahler being the fundamental group of the product of a torus and an orientable 2-manifold.
Thus we have an example demonstrating the fact that K\"ahlerness is not preserved, even up to commensurability, by quasi-isometries within the category of
compact aspherical complex surfaces.  This gives a strong negative answer to a
question of Gromov's (Problem on page 209 of \cite{gromov-ai}).

\subsection{Generalizations and Questions} After the completion of this paper, Thomas Delzant told us of the following simple 
and elegant proof  of a generalization
of Theorem \ref{dsk}:

\begin{theorem} \label{thdel} (Delzant) Let $G$ be a Poincar\'e duality group of dimension three
(PD(3) for short) group that does not have property $T$. Then $G$ cannot be K\"ahler. \end{theorem}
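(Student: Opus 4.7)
The plan is to argue by contradiction. Assume $G$ is a Kähler PD(3) group without property T. The strategy is to manufacture a surjection from a finite-index subgroup onto a higher-genus surface group, force the kernel to be virtually $\mathbb{Z}$ by Poincaré duality, trivialize the resulting central $\mathbb{Z}$-extension via Arapura's torsion Euler class theorem, and finally derive a parity contradiction on $b_1$.

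The first step, whose machinery lies slightly outside the excerpt, is to invoke a structure theorem for non-Kazhdan Kähler groups due to Delzant (via Delorme--Guichardet combined with harmonic-map / Hodge-theoretic arguments on Kähler manifolds): a Kähler group that does not have property T virtually admits a surjective homomorphism, with infinite image, onto the fundamental group of a closed orientable surface of genus at least two. Applying this to $G$, I would obtain a finite-index subgroup $G' \subseteq G$ and a surjection $\psi : G' \twoheadrightarrow \pi_1(\Sigma)$ with $g(\Sigma) \geq 2$, and I would choose $g(\Sigma)$ maximal among all such surjections from finite-index subgroups of $G$, so that Theorem \ref{torsion} becomes applicable later.

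Next I would analyze the kernel $K := \ker\psi$. Since finite-index subgroups of PD(3) groups are again PD(3), the group $G'$ is PD(3) while $\pi_1(\Sigma)$ is PD(2). A Bieri--Eckmann type dimension argument applied to the extension $1 \to K \to G' \to \pi_1(\Sigma) \to 1$ forces $\mathrm{cd}(K) \leq 1$, hence $K$ is free by Stallings--Swan; the PD(3) property of $G'$ combined with the additivity of duality dimensions in extensions then pins $K$ down to be virtually $\mathbb{Z}$. At this point Theorem \ref{torsion} applies and yields that the Euler class $e(\psi) \in H^2(\pi_1(\Sigma), K^{\mathrm{ab}})$ is torsion. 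Passing to a further finite cover to kill this torsion produces a finite-index subgroup $G'' \subseteq G$ with $G'' \cong \mathbb{Z} \times \pi_1(\Sigma'')$ for some closed orientable surface $\Sigma''$ of positive genus.

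The final step is the Betti number count: $b_1(G'') = 1 + 2g(\Sigma'')$ is odd, whereas $G''$ is Kähler (being finite index in the Kähler group $G$), so $b_1(G'')$ must be even. This contradiction completes the argument. The main obstacle is Step 1: the excerpt does not state the non-Kazhdan structure theorem, and while one could in principle try to construct a \emph{cut subgroup} in the sense of Corollary \ref{pppcor} directly from an unbounded affine action on a Hilbert space (producing a codimension-one wall and thereby reproving the essential content of Delzant's result), this is precisely the technically hardest input.
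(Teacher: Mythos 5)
Your argument has a genuine gap at its very first step, and that step carries all the weight. The ``structure theorem'' you invoke --- that a K\"ahler group without property T virtually admits a surjection with infinite image onto a closed surface group of genus at least two --- is false as stated: ${\mathbb Z}^{2}$ (or ${\mathbb Z}^{2n}$, or any K\"ahler group with infinite abelianization that does not fiber) is K\"ahler, is not Kazhdan, and admits no such surjection from any finite-index subgroup. No theorem of this strength is available; what the absence of property T actually buys (via Delorme--Guichardet/Shalom) is only a unitary representation $\pi$ with nonvanishing first \emph{reduced} $\ell^2$-cohomology $H^1_{red}(G,\pi)\neq 0$, which is far from a fibration onto a hyperbolic $2$-orbifold. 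Since you yourself flag Step 1 as the hardest input and propose to manufacture a cut subgroup from an unbounded affine action, you are in effect assuming the conclusion of a fibration theorem that does not exist in this generality. There is also a secondary problem in Step 2: the Bieri--Eckmann additivity you cite (Theorem \ref{duality}) runs in the wrong direction --- it builds a duality group of dimension $m+n$ from duality subquotients, it does not let you deduce that the kernel $K$ of a surjection from a PD(3) group onto a PD(2) group is a duality group of dimension $1$. For that you need $K$ to be $FP$ (or at least finitely generated), and nothing in your construction provides this; a priori $K$ could be an infinitely generated free group, in which case the Euler-class step (Theorem \ref{torsion}) and the final Betti-number parity count never get off the ground.

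The paper's proof is entirely different and much softer; no fibration, no Euler class, and no parity argument appear. From $H^1_{red}(G,\pi)\neq 0$ one takes a harmonic $\pi$-valued $1$-form $\omega$ on a compact K\"ahler manifold $M$ with $\pi_1(M)=G$; then $\omega\wedge\overline{\omega}$ is a closed $(1,1)$-form defining a class coming from $H^2(G)$, and Poincar\'e duality in dimension $3$ forces $H^1(G)\neq 0$. Hence the Albanese map of $M$ is nontrivial, and its image must be a complex curve because a $2$-dimensional image would produce a nonzero class in $H^4(G)$, which vanishes for a PD(3) group. But the class of that curve in $H^2(G)$ has trivial cup product with every class in $H^1(G)$, contradicting Poincar\'e duality. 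If you want to salvage your outline, you would have to replace Step 1 by this reduced-$\ell^2$-cohomology input and abandon the fibration-plus-Euler-class machinery, which is really only appropriate for the Seifert-fibered analysis elsewhere in the paper.
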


\begin{proof} We argue by contradiction.
Suppose $G$ is  a K\"ahler group and $M$ is a compact K\"ahler manifold with fundamental group $G$. 
One can find a unitary representation $\pi$ with first
reduced $l_2$ cohomology $H^1_{red}(G,\pi) \neq 0$. 
Let $\omega$ be the  harmonic representative of such a non-vanishing cohomology class.
Then  $\omega \wedge \overline{ \omega}$
 is a closed (1,1) form on the K\"ahler manifold $M$ which comes from $H^2(G)$. Since $G$ is PD(3), it follows that $H^1(G)$ is not trivial. 
Hence the Albanese map from $M$ to the Albanese torus is not trivial.
The image of this map must be a complex curve; else one would have a non-vanishing cohomology class in $H^4(G)$.

But this curve represent a class in $H^2( G)$ such that every class in $H^1(G)$ has trivial cup product with it, contradicting 
Poincar\'e duality. \end{proof}

\begin{comment}
This prompts us to ask for the following generalization of Theorem \ref{main1}:

\begin{qn} Consider an  exact sequence $$1 \longrightarrow N
\longrightarrow G \longrightarrow Q \longrightarrow 1$$
 with $Q$ a PD(3) group that does not have property $T$, $G$ a K\"ahler group and
$N$ finitely generated. Is $Q$ virtually the product of a surface group with $\mathbb Z$ or at least the fundamental
group of a Seifert-fibered space? \label{dq} \end{qn}

Delzant has recently informed us that he has an approach towards an affirmative solution to Question \ref{dq}.

\end{comment}

\smallskip

One of the conclusions of Theorem \ref{main1} is that the quotient 3-manifold group
$Q$ could be the $3$-dimensional Heisenberg group. It is not clear
whether this situation can arise
at all. Thus we ask

\begin{qn} Do there exist  exact sequences $$1 \longrightarrow N
\longrightarrow G \longrightarrow Q \longrightarrow 1$$
 with $Q$ the $3$-dimensional Heisenberg group, $G$ a K\"ahler group and
$N$ finitely presented (or finitely generated)? \label{qq} \end{qn}

Question \ref{qq} seems to be related to the issue of finding which nilpotent groups may arise
as K\"ahler groups. Campana \cite{campana} gives examples of nilpotent
K\"ahler  groups $H$ fitting into the exact sequence $$1 \longrightarrow
\ints \longrightarrow H \longrightarrow {\ints}^{\oplus 8}
\longrightarrow 1$$
with the presentation $\{ t, x_1, y_1, \cdots , x_4, y_4 : [x_i,y_j] = \delta_{ij} t; [x_i, x_j]
= 0 = [y_i, y_j] \}$. However, it is not clear which 3-step nilpotent groups may arise
as K\"ahler groups. This seems to be the principal difficulty in addressing
Question \ref{qq} at the moment.

We end with a simple observation based on Campana's example above. It follows
from the presentation of the group that the $3$-dimensional Heisenberg
group can in fact
arise as a normal subgroup of a K\"ahler group. Let $H_1 \subset H$ be the subgroup
generated by $\{ t, x_1, y_1\}$. Then $H_1$ is isomorphic to the
$3$-dimensional Heisenberg group. Also the quotient group $H/H_1 =
{\ints}^{\oplus 6}$. Thus
we may have non-K\"ahler finitely presented
groups $N$ (here the $3$-dimensional Heisenberg group) appearing in
an exact sequence $$1 \longrightarrow N \longrightarrow G
\longrightarrow Q \longrightarrow 1$$ with both $G$ and $Q$
K\"ahler.

\bigskip

{\bf Acknowledgments:} This work was started during a visit of the third author to RKM Vivekananda University and completed during a visit of
the second author to Tata Institute of Fundamental Research, Mumbai. We thank these institutions for their hospitality. We thank Jonathan Hillman
for bringing the paper \cite{hill} to our notice when an earlier version of this paper was posted on arxiv; and Thomas Delzant for sharing with
us his simple and elegant (unpublished) proof of
Theorem \ref{thdel}. We would also like to thank
the referee for suggesting a generalization of the main result of an earlier version, for a substantial simplification
of the proof of Proposition \ref{remark-torus} and several expository changes.

\end{document}